%% file: main.tex
\documentclass[11pt,a4paper,reqno]{amsart}%article}%
\input{preamble}
\DeclareMathAlphabet{\mathcal}{OMS}{cmsy}{m}{n}
\title[The $G$-Noncommutative minimal model program]{The $G$-Noncommutative minimal model program}

\author{Dongjian Wu}
\address{Department of Mathematics, Graduate School of Science, The University of Osaka, Toy
onaka Osaka, Japan}
\email{wu.dongjian.7cx@osaka-u.ac.jp}
\date{}

\author{Nantao Zhang}
\address{Department of Mathematical Sciences, Tsinghua University, 100084 Beijing, China}
\email{znt21@mails.tsinghua.edu.cn}
\date{}

\keywords{Bridgeland stability conditions, equivariant derived categories, semiorthogonal decompositions, quantum cohomology}
\begin{document}
\setcounter{tocdepth}{1}

%=========================================================

\begin{abstract}
In this paper, we study the $G$-equivariant noncommutative minimal model program ($G$-NMMP), as an equivariant generalization of the framework introduced in \cite{halpernleistner2024noncommutativeminimalmodelprogram}. The aim of this program is to construct quasi-convergent paths in the spaces of Bridgeland stability conditions on derived categories of $G$-equivariant coherent sheaves. For finite groups, we employ induction techniques to construct such paths from the non-equivariant setting. In the setting of algebraic group actions, we introduce the notion of $\bT$-stability conditions to reformulate the proposal, and then we construct quasi-convergent paths for equivariant projective spaces from small quantum cohomology.
\end{abstract}

\maketitle
\tableofcontents
% \setcounter{section}{1}

%\addtocontents{toc}{\setcounter{tocdepth}{1}}

\setlength\parindent{0pt}
\setlength{\parskip}{5pt}

%=========================================================
\section{Introduction}
\subsection{Motivation}
The classical minimal model program (MMP) seeks to simplify an algebraic variety via a sequence of birational transformations, culminating in either a minimal model (with nef canonical class) or a Mori fiber space. In the presence of a group action,
the $G$-equivariant minimal model program ($G$-MMP) carries out this process while preserving the group throughout. In the classical MMP, the two-dimensional case is well-understood: each step corresponds to the contraction of a $(-1)$ curve. The three-dimensional case introduces new phenomena, such as flips \cite{Kollár_Mori_1998}. The two-dimensional $G$-MMP for finite groups is also well developed and has been applied successfully in many instances \cite{Blanc2006LinearisationOF,Dolgachev2009,Tsygankov_2011}. We refer to the comprehensive survey \cite{Prokhorov_2021} for a detailed discussion of $G$-MMP, including applications in higher dimensional birational geometry. 

From a categorical perspective, the classical MMP of a smooth projective variety $X$ is interwined with the structure of the bounded derived category of coherent sheaves $D^b(X)$. Seminal work of Bondal and Orlov \cite{Bondal1995SemiorthogonalDF} established that birational transformations occurring in the MMP-such as divisorial contractions and simple flips-induce semiorthogonal decompositions of $D^b(X)$. When $X$ carries an action of a reductive algebraic group $G$, one considers the $G$-equivariant derived category $D^b_G(X)$, whose objects are $G$-equivariant coherent sheaves. Analogous to the non-equivariant case, $G$-equivariant birational maps can similarly induce semiorthogonal decompositions of $D^b_G(X)$. This provides a categorical framework for studying the $G$-MMP through derived categories.
\begin{sloppypar}
The classical correspondence between birational geometry and semiorthogonal decompositions further motivates the noncommutative minimal model program (NMMP), introduced by Halpern-Leistner \cite{halpernleistner2024noncommutativeminimalmodelprogram}. The NMMP operates directly on triangulated categories using Bridgeland stability conditions \cite{Bridgeland2006SpacesOS}, constructing semiorthogonal decompositions via quasi-convergent paths in the stability manifolds. A key motivation for the NMMP comes from quantum cohomology. For a Fano variety $X$, the quantum product $\star_{\tau}$ deforms the classical cup product via Gromov-Witten invariants, and the associated quantum differential equation governs the flat sections of the quantum connection \cite{IRITANI20091016}. Dubrovin's conjecture and its refinement, the Gamma Conjecture II \cite{10.1215/00127094-3476593}, predict a deep correspondence between the analytic properties of solutions to this equation and exceptional collections in $D^b(X)$. This suggests that stability conditions, whose central charges often resemble period integrals, should be constructible from quantum differential equations. 
\end{sloppypar}

The equivariant counterpart-equivariant Gromov-Witten theory and equivariant quantum differential equations-has been well studied in the literature (see \cite{1994hep.th....5035K,Givental1996EquivariantG,Cox1999MirrorSA,liu2017equivariant,Anderson_Fulton_2023}). It is therefore natural to study relations among the following themes:
\begin{enumerate}
\item semiorthgonal decompositions of $D^b_G(X)$;
\item the equivariant quantum cohomology of $X$ with $G$-action;
\item quasi-convergent paths in the space of stability conditions $\Stab(D^b_{G}(X))$.
\end{enumerate}
 The goal of this note is to develop the $G$-equivariant noncommutative minimal model program ($G$-NMMP), which serves as an equivariant generalization of the NMMP that incorporates group actions systematically.

\subsection{Main proposal}
In \cite{halpernleistner2024noncommutativeminimalmodelprogram}, the author formulated a proposal relating semiorthgonal decompositions, stability conditions and (truncated) quantum differential equations within the framework of the NMMP. In the equivariant setting, it is natural to seek an analogous proposal that incorporates a group action. A key new ingredient in our work is the notion of $\bT$-stability conditions (see \Cref{sec:T-stab}), which generalizes the $q$-stability conditions introduced in \cite{Ikeda_Qiu_2023}. Recall that a $\bT$\emph{-category} $\cD_{\bT}$ consists of a triangulated category $\cD$ with $m$ commuting auto-equivalences
\[
T_i:\cD_{\bT}\to\cD_{\bT}, \quad i=1,\dots,m.
\] 
\begin{definition}[\Cref{def:T-stab}]
A \emph{pre-$\bT$-stability condition} $(\sigma,\s)$ on a $\bT$-category $\cD_{\bT}$ consists of a (Bridgeland) pre-stability condition $\sigma=(Z,\cP)$ on $\cD_{\bT}$ and a tuple $\s=(s_1,\dots,s_m)\in\mathbb C^m$ satisfying 
\begin{equation}
T_i(\sigma)=s_i\cdot\sigma, \text{ for } i=1,\dots,m.
\end{equation}
\end{definition}

Under the \Cref{as:free of K}, the Grothendieck group $K(\cD_{\bT})$ can be endowed with a norm $\|\cdot\|$ which allows us to formulate the $\bT$-support property for pre-$\bT$-stability conditions (see \Cref{def:support property for T-stab}). A pre-$\bT$-stability condition $(\sigma,\s)$ on $\cD_{\bT}$ is called $\bT$-stability condition if it satisfies the $\bT$-support property. We denote by $\bT\Stab_{\s}\cD_{\bT}$ the space of all $\bT$-stability conditions with respect to a fixed tuple $\s\in\mathbb C^m$. \Cref{pro:defomation of T-stab} establishes the deformation property of this space. 

For a smooth projective variety $X$ with an action of a connected reductive algebraic group $G$, the equivariant category $D^b_{G}(X)$ naturally inherits the structure of a $\bT$-category with respect to a maximal torus of $G$ (\Cref{ex:T-sheaf}). Hence, we obtain the associated space of $\bT$-stability conditions, denoted by $\bT\Stab_{\s}(X)\coloneq\bT\Stab_{\s}(D^b_{G}(X))$. Our main proposal is stated as follows:

\begin{proposal}[\Cref{proposal:main}]
  \label{intro:proposal}
Let $\pi:X\to Y$ be a $G$-contraction of a smooth projective variety $X$ of dimension $m$ and $\Phi^G_t\in\End(H^{\ast}_G(X;\mathbb C))$ be a fundamental solution of the  $G$-truncated quantum differential equation \eqref{eq: qde}. Then, for any generic tuple $\s=(s_1,\dots,s_m)\in\mathbb C^m$, there exists a quasi-convergent path $\sigma_t^G=(Z_t^G,\cP_t^G)$ for $t\in[t_0,\infty)$ in $\bT\Stab_{\s}(X)$ whose central charges take the form 
\[
Z_{t}^G(\cE)=\ev_\s\int^{eq}_X\Phi^G_t(v^G(\cE)),
\]
for $\cE\in K^G(X)$, where $\ev_{\s}$ is the evaluation map with respect to $\s$ described in \Cref{subsec:main proposal}.
\end{proposal}

\begin{remark}
The genericity of $\s$ ensures that the evaluation $\ev_\s$ yields well-defined complex numbers. When $G$ is a finite group, the connectedness assumption is unnecessary. Moreover, since the torus maximal $\bT$ is trivial, we have $\bT\Stab_{\s}(X)=\Stab(D^b_G(X))$. In this setting, one may also consider  stability conditions supported on the lattice provided by the Chen-Ruan cohomology ring, together with quantum differential equations arising from orbifold Gromov-Witten theory \cite{ChenRuancohomology, chen2002orbifold}.
\end{remark}

In \cite[Remark 13]{halpernleistner2024noncommutativeminimalmodelprogram}, it is suggested that the quasi-convergent paths for \Cref{intro:proposal} should start from a geometric stability condition-one under which all skyscraper sheaves of points are stable and of the same phase. In the equivariant setting, we introduce the notion of $G$\emph{-point sheaves} and use them to define an analogue of geometric stability conditions for $D^b_G(X)$ (\Cref{def:geometric stab for stack}).

\subsection{Main results}
Building on technique for inducing stability conditions developed in \cite{MacriSukhenduPaolo07,2023arXiv231002917Q,dell2024fusionequivariantstabilityconditionsmorita}, our first result shows how to lift quasi-convergent paths from non-equivariant setting to the equivariant one, thereby producing solutions to \Cref{intro:proposal} for finite group actions.

\begin{theorem}[\Cref{thm: inducing solutions}]
  \label{intro:induce}
Let $X$ be a smooth projective variety with an action of a finite group $G$. Let $\sigma_{t}=(Z_t,\cP_t)$ for $t\in[t_0,\infty)$ be a quasi-convergent path in $\Stab(X)^G$ that solves the non-equivariant \Cref{intro:proposal} via a fundamental solution $\Phi_t$ of \eqref{eq: qde}. Then it induces a quasi-convergent path $\sigma^G_t$ in $\Stab(D^b_G(X))$ that solves \Cref{intro:proposal} with a fundamental solution $\Phi^G_t$ of \eqref{eq: qde} and preserves the spanning condition. Moreover, if $\sigma_{t_0}$ is geometric, then $\sigma_{t_0}^G$ is also geometric.
\end{theorem}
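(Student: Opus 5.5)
The plan is to use the inducing construction of \cite{MacriSukhenduPaolo07,dell2024fusionequivariantstabilityconditionsmorita}. Let $\mathrm{Forg}\colon D^b_G(X)\to D^b(X)$ be the forgetful functor and $\mathrm{Ind}\colon D^b(X)\to D^b_G(X)$, $\mathcal F\mapsto\bigoplus_{g\in G}g^*\mathcal F$ with its canonical linearization, its two-sided adjoint. For a $G$-invariant $\sigma=(Z,\cP)\in\Stab(X)^G$ we set
\[
Z^G(\cE)\coloneq Z(\mathrm{Forg}\,\cE),\qquad
\cP^G(\phi)\coloneq\{\cE\in D^b_G(X): \mathrm{Forg}\,\cE\in\cP(\phi)\}.
\]
By \cite{MacriSukhenduPaolo07} this defines a stability condition $\sigma^G\in\Stab(D^b_G(X))$. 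The key inputs are:
\begin{itemize}
\item the slicing axioms, where Harder--Narasimhan filtrations of $\mathrm{Forg}\,\cE$ are $G$-invariant by uniqueness and therefore descend;
\item the support property, which holds because $\mathrm{Forg}$ is faithful on $K$-theory up to the finite index $|G|$.
\end{itemize}
The assignment $\sigma\mapsto\sigma^G$ is continuous on $\Stab(X)^G$, so $\sigma_t\mapsto\sigma^G_t$ is a continuous path.

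\textbf{Step 1: quasi-convergence.} We transfer quasi-convergence along $\mathrm{Forg}$. An object $\cE$ is $\sigma^G_t$-semistable iff $\mathrm{Forg}\,\cE$ is $\sigma_t$-semistable, and phases and masses agree: $\phi^G_t(\cE)=\phi_t(\mathrm{Forg}\,\cE)$ and $m^G_t(\cE)=m_t(\mathrm{Forg}\,\cE)$. Hence the limit-semistability conditions (convergence of $\phi_t$ and the behaviour of $\log|Z_t|$) for $\sigma_t$-semistable objects pass directly to objects of $D^b_G(X)$. We also compare the equivalence relation on limit-semistable objects used to build the semiorthogonal pieces: for $\cE,\cF\in D^b_G(X)$, the relevant $\Hom$-vanishings follow from those in $D^b(X)$ via $\Hom_G(\cE,\cF)=\Hom(\mathrm{Forg}\,\cE,\mathrm{Forg}\,\cF)^G$. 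The pieces of the resulting decomposition are then the $G$-equivariant objects in the $G$-stable pieces of the non-equivariant one, in the sense of \cite{2023arXiv231002917Q}.

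\textbf{Step 2: the spanning condition.} Given $\cE\in D^b_G(X)$, we take the $G$-invariant limit HN filtration of $\mathrm{Forg}\,\cE$ and descend it. Each factor is then a $G$-equivariant limit-semistable object, so the limit-semistable objects of $\sigma^G_t$ again generate.

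\textbf{Step 3: central charges.} Since $G$ is finite, $H^*_G(X;\mathbb C)=H^*(X;\mathbb C)^G$ and the evaluation $\ev_\s$ is trivial. We would define $\Phi^G_t$ as the restriction of $\Phi_t$ to $G$-invariants. This requires checking that $\Phi_t$ commutes with the $G$-action, which holds because the quantum product is $G$-equivariant, so $G$ preserves the truncated QDE and its canonical fundamental solution. Moreover $v^G(\cE)$ corresponds to $v(\mathrm{Forg}\,\cE)$, using $\mathrm{ch}$ and $\Gamma$-classes compatible with pullback. This gives
\[
Z^G_t(\cE)=\int_X\Phi_t(v(\mathrm{Forg}\,\cE))=\int^{eq}_X\Phi^G_t(v^G(\cE)).
\]

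\textbf{Step 4: geometricity.} Let $\sigma_{t_0}$ be geometric. A $G$-point sheaf is of the form $\mathrm{Ind}$ of a skyscraper sheaf restricted appropriately, that is, $\mathcal O_{Gx}\otimes\rho$ for an irreducible representation $\rho$ of the stabilizer. Its forgetful image is a direct sum of skyscrapers $\mathcal O_{gx}$ of equal phase, hence semistable. Stability (rather than semistability) of the equivariant object follows because any destabilizing equivariant subobject would have to be built from $G$-invariant sub-sums, which irreducibility of $\rho$ excludes.

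\textbf{Main obstacle.} I expect Step 1 to be the hardest: the precise quasi-convergence definition of \cite{halpernleistner2024noncommutativeminimalmodelprogram} involves limit HN filtrations and comparisons of $\log$-masses between pairs of objects. One must check that the $G$-invariance of these filtrations is preserved for all large $t$, not just pointwise. My first attempt would be to apply uniqueness of limit HN filtrations to $g^*$ of the filtration.
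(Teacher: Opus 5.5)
\textbf{Main gap: the spanning condition is never proved.} Most of your plan matches the paper's proof: you induce via $\mathrm{Res}^{-1}$, transfer limit semistable filtrations and $l_t$ along the forgetful functor, restrict $\Phi_t$ to $H^*(X;\mathbb C)^G$, and check $G$-point sheaves by the argument about $G$-invariant sub-sums of skyscrapers. However, your ``Step 2: the spanning condition'' only shows that every object of $D^b_G(X)$ has a limit semistable filtration. That is axiom (1) of quasi-convergence, already part of your Step 1. The spanning condition of \Cref{def:spanning condition for finite groups} is a separate, cohomological statement. For each real part $r$ of an eigenvalue of $-\frac1z E^G_\psi(1)$, the growth subspace $F^r_G$ must be spanned by classes $v^G(\cE)$ of $\sigma^G_t$-limit semistable objects satisfying $\liminf_{t\to\infty}|Z^G_t(\cE)|/\|\Phi^G_t(v^G(\cE))\|>0$. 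Generation of the category says nothing about which classes lie in $F^r_G$, nor about this growth lower bound. So this conclusion of the theorem is unproved in your proposal.

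\textbf{How to close it.} The paper's argument goes through induction rather than descent. With $\Phi^G_t=\Phi_t|_{H^*(X)^G}$ one has $F^r_G=F^r\cap H^*(X;\mathbb C)^G$. For $\alpha\in F^r_G$, the non-equivariant spanning condition gives $\alpha=\sum_i c_i v(E_i)$ with each $E_i$ limit semistable and satisfying the growth bound. Then:
\begin{enumerate}
\item The objects $\mathrm{Ind}(E_i)$ are $\sigma^G_t$-limit semistable. Indeed $\mathrm{Forg}\,\mathrm{Ind}(E_i)=\bigoplus_g g^*E_i$, and $G$-invariance of $\sigma_t$ gives $\phi^\pm_t(g^*E_i)=\phi^\pm_t(E_i)$.
\item Averaging the invariant class $\alpha$ gives $\alpha=\frac{1}{|G|}\sum_i c_i\, v^G(\mathrm{Ind}E_i)$.
\item $G$-invariance of $Z_t$ gives $Z^G_t(\mathrm{Ind}E_i)=|G|\,Z_t(E_i)$. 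For $G$-equivariant $\Phi_t$ and a $G$-invariant norm, $\|\Phi_t(\sum_g g^*v(E_i))\|\le |G|\,\|\Phi_t(v(E_i))\|$. Together these transfer the growth bound to $\mathrm{Ind}(E_i)$.
\end{enumerate}

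\textbf{Smaller point in Step 3.} It is not enough that $G$ preserves the non-equivariant truncated equation. The equation \eqref{eq: qde} on $H^*_G(X)$ is built from $c_1^G$ and the equivariant operators $T^G_d$. You must check that $E^G_\psi(t)=E_\psi(t)|_{H^*(X)^G}$. The paper does this using $\langle\alpha,\beta\rangle^{X,G}_{0,2,d}=\langle\alpha,\beta\rangle^{X}_{0,2,d}$ and the nondegeneracy of $\eta$ on $H^*(X)^G$ from \Cref{lem:finite G-chomology}.

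\textbf{Smaller point on your ``main obstacle''.} Uniqueness of limit filtrations up to isomorphism is not enough to descend a $G$-linearization; you need functorial truncations. The paper gets these from the $G$-stable t-structures $(\cP_t(>\phi^+_t(G_i)),\cP_t(\le\phi^+_t(G_i)))$ together with Polishchuk's descent of t-structures to $D^b_G(X)$. Truncation triangles are unique up to unique isomorphism, and that is what supplies the cocycle condition.
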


Here, the spanning condition ensures that the asymptotic growth rates of central charges from the quantum differential equation correspond to genuine limit semistable objects (see \Cref{def:spanning condition for finite groups}). As a corollary, we recover the result of \cite{krug2024endomorphismalgebrasequivariantexceptional} on inducing semiorthgonal decompositions by quasi-convergent paths. Recall that in \cite{halpernleistner2024noncommutativeminimalmodelprogram}, Halpern-Leisner verified \Cref{intro:proposal} for the projective line; later work \cite{zuliani2024semiorthogonaldecompositionsprojectivespaces} treated projective spaces, and \cite{karube2024noncommutativemmpblowupsurfaces} studied the NMMP for blowup surfaces. More recently, \cite{KRZ2026} extended the construction of quasi‑convergent lifts to further Fano examples such as Grassmannians, quadrics, and smooth cubic threefolds and fourfolds. By applying \Cref{intro:induce} to these known non-equivariant solutions, we obtain explicit families of quasi-convergent paths that solve the equivariant \Cref{intro:proposal}. Concrete examples are discussed in \Cref{subsec:application of inducing}.

In \Cref{sec:G-NMMP for projective spaces}, we examine the $G$-NMMP for projective spaces equipped with a torus $\bT$-action. Let \(\Omega_{\bT}\) be the complement in \(\bC^{m}\) of the hyperplanes
  \[z_{i} - z_{j} = k, \quad i,j = 1, \cdots, m, i \neq j, k \in \bZ\]
and define
\[
H_{\bT}^{\Omega_{\bT}}(\bP^{m - 1}) = H_{\bT}^{*}(\bP^{m - 1}) \otimes_{H_{\bT}({\rm pt})} \cO_{\Omega_{\bT}},
\]
where $\cO_{\Omega_{\bT}}$ denotes the ring of holomorphic functions on $\Omega_{\bT}$. Our second result is the following: 
\begin{theorem}[\Cref{thm:tncmmp}]
  For any \(\s \in \Omega_{\bT}\), there exists \(R > 0\), such that for every admissible \(\theta\), there exists a quasi-convergent path \(\sigma_{t} = (Z_{t}, \cA_{t})\in\bT\Stab_{\s}(\bP^{m-1})\) for
  \[t = r e^{- 2 \i\pi \theta}, \quad  r \in (R, + \infty)\]
  with \(Z_{t}(\alpha) = \ev_{\s} (\int_{\bP^{m - 1}}^{T} \Phi_{t}({\rm Ch}_{\bT}(\alpha)))\), where \(\Phi_{t}\) is a fundamental solution of equivariant quantum differential equation linearly extended to \({\rm End}(H_{\bT}^{\Omega_{\bT}}(\bP^{m - 1}))\), valid in the sector
  \[\frac{n}{m} - 1 < \phi < \frac{n}{m}, \quad n\in\mathbb Z, \phi\in\mathbb R.\]
\end{theorem}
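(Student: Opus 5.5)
The construction follows the strategy of Halpern-Leistner for $\bP^1$ and of Zuliani for $\bP^{m-1}$, carried out $\bT$-equivariantly. First I solve the equivariant quantum differential equation explicitly. The small equivariant quantum cohomology is
\[
H_{\bT}^{\Omega_{\bT}}(\bP^{m-1})\cong \cO_{\Omega_{\bT}}[H]\Big/\prod_{i=1}^m (H-z_i)-q ,
\]
so the quantum connection in the variable $t$ is a confluent hypergeometric (Bessel/Meijer-type) system. Its solutions are given by Mellin–Barnes integrals, or equivalently by the equivariant $I$-function
\[
\sum_d q^{d+H/\hbar}\Big/\prod_i\prod_{k=1}^d (H-z_i+k\hbar).
\]
For $\s\in\Omega_{\bT}$ no two $z_i$ differ by an integer. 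Hence the evaluation $\ev_\s$ is well defined. Localizing at the $m$ fixed points also diagonalizes the problem, since the classes $\prod_{j\ne i}(H-z_j)$ give an idempotent-type basis.

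Next I analyse the asymptotics of $Z_t(\cO(k)\otimes\chi)$ as $r\to\infty$ along the ray $t=re^{-2\i\pi\theta}$. The eigenvalues of $c_1\star$ at $t$ are $u_j=m\,t^{1/m}\zeta^j$ up to lower-order $\s$-dependent corrections. By the steepest-descent/Stokes analysis of the Mellin–Barnes integral, valid in the stated sector $\frac nm-1<\phi<\frac nm$, there is a basis of flat sections with $Z_t(E_j)\sim C_j\,t^{\beta_j}e^{-u_j}$. This basis corresponds to an exceptional collection $E_1,\dots,E_m$ of $\bT$-equivariant line bundles $\cO(k)\otimes\chi_{\bullet}$, namely the Beilinson collection twisted by appropriate characters depending on $n$.

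The genericity $\s\in\Omega_{\bT}$ enters in two places. It ensures the leading coefficients $C_j$ (Gamma-function ratios in $z_i-z_j$) are nonzero. It also ensures that the $T_i$-twists, namely tensoring by characters, act on $Z_t$ by multiplication by $s_i$-dependent factors $e^{2\pi\i z_i}$. This gives the equivariance condition $T_i\sigma=s_i\cdot\sigma$ required by \Cref{def:T-stab}.

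With this I build $\sigma_t$ by gluing. The equivariant Beilinson collection, together with all its character twists, gives a $\bT$-stable semiorthogonal decomposition of $D^b_{\bT}(\bP^{m-1})$ into $m$ pieces. Each piece is equivalent to $D^b(\mathrm{Rep}\,\bT)$ with its $\bT$-action. On each piece I take the evident $\bT$-stability condition whose heart is generated by $E_j\otimes\chi$ and whose central charge is $Z_t$. For $r>R$ the phases of $Z_t(E_j)$ are governed by $\arg(-u_j)$ and are therefore well separated along any admissible direction $\theta$. The gluing criterion (Collins–Polishchuk, in the $\bT$-version underlying \Cref{pro:defomation of T-stab}) then gives a pre-$\bT$-stability condition. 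The $\bT$-support property follows from the norm of \Cref{as:free of K}, because the $E_j\otimes\chi$ form a basis of $K$ over the representation ring. Quasi-convergence follows from the exponential separation of the $|Z_t(E_j)|$ and the linear growth of $\log$-phases, as in Halpern-Leistner's criterion. The limit semistable objects are the $E_j\otimes\chi$, and the induced filtration is the Beilinson decomposition.

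I expect the main obstacle to be uniformity. The Stokes analysis and phase separation must hold for a single $R$ across all admissible $\theta$ and, simultaneously, for all character twists $E_j\otimes\chi$, of which there are infinitely many. I would first check that twisting by $\chi$ only multiplies $Z_t$ by $e^{2\pi\i\langle\chi,z\rangle}$, independent of $t$. If so, the $\bT$-structure reduces the uniformity to the $m$ untwisted objects, after which the bounds can be handled as in the non-equivariant case.
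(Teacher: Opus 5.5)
Your overall architecture matches the paper's. The shared ingredients are: pure exponential asymptotics for the flat sections attached to a window of $m$ consecutive line bundles (\Cref{lem:15}); gluing the $\bT$-stability conditions of \Cref{lem:14} on the pieces $\cE_i\otimes\Rep(\bT)$; the observation that a character twist multiplies $Z_t$ only by the $t$-independent factor $Z_\s([L])$; and quasi-convergence read off from $l_t=ms\zeta_m^{n}+O(\log s)$.

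\textbf{The gap is in the gluing step.} Well-separated phases are not what Collins--Polishchuk gluing requires. For $\langle \cE_0\otimes\Rep(\bT),\dots,\cE_{m-1}\otimes\Rep(\bT)\rangle$, the condition $\Hom^{\le 0}(H_i,H_j)=0$ for $i<j$ forces the phases to \emph{increase along the semiorthogonal order}, i.e.\ $\lceil\phi_i\rceil<\phi_{i+1}$ as in \Cref{prop:7}. The order of the collection is fixed by Hom-vanishing. The phase order along $t=re^{-2\pi\i\theta}$ is instead dictated by $\Im(e^{-2\pi\i\theta}\zeta_m^{n})$. For the (twisted) Beilinson window these two orders disagree for many admissible $\theta$.

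For example, take $m=3$ and $\theta=-\frac{1}{20\pi}$. Then the window is $\cO(-2),\cO(-1),\cO$, and the arguments of $e^{-2\pi\i\theta}\zeta_3^{n}$ are $0.1+2\pi n/3$. Along the exceptional order the imaginary parts are approximately $-0.91,\ 0.81,\ 0.10$. This is not monotone, although $\theta$ is admissible. Twisting by characters shifts phases only by bounded constants, so it cannot repair this. Hence the glued stability condition you describe does not exist for such $\theta$. Your claim that the limit filtration is the Beilinson decomposition is also false in general; the paper says explicitly that mutations are required.

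\textbf{What is missing is the reordering step of \Cref{lem:17}.} Start from the Beilinson window. Whenever an adjacent pair $(\cE_j,\cE_{j+1})$ is out of order in imaginary part, mutate it:
\begin{itemize}
\item if $\Re(e^{-2\pi\i\theta}\zeta_m^{\sigma(j)})>\Re(e^{-2\pi\i\theta}\zeta_m^{\sigma(j+1)})$, replace it by $(\cE_{j+1},R_{\cE_{j+1}}\cE_j)$;
\item otherwise, replace it by $(L_{\cE_j}\cE_{j+1},\cE_j)$.
\end{itemize}
The new object's central charge is the dominant old one minus a constant multiple of the subdominant one. So along the ray it keeps the asymptotics $ms\zeta_m^{\sigma(j)}+O(\log s)$ (resp.\ $\sigma(j+1)$). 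This is where the real-part half of admissibility is used, not merely distinct phases. Each step removes an inversion. Bondal--Polishchuk (\Cref{lem:16}) keeps the collection strong (up to shifting its members), and together with the linearly growing phase gaps for $r>R$ this is what makes the $\Hom^{\le 0}$ condition hold.

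In the example above, the inverted pair is $(\cO(-1),\cO)$ with real parts about $-0.58<0.995$. One left mutation gives $\cO(-2),\ \Omega_{\bP^2}[1]=L_{\cO(-1)}\cO,\ \cO(-1)$, with imaginary parts $-0.91,\ 0.10,\ 0.81$. Note that the new member is not a line bundle, so the collection you glue along cannot in general be a twist of Beilinson. With this step inserted, the rest of your argument (gluing, reduction of character twists, quasi-convergence) goes through as in the paper.
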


When $m=3$ (i.e. for $\mathbb P^2$), we show in \Cref{thm:m=3} that the quasi-convergent paths can be chosen to start from a geometric point. The same argument may also apply to any Fano variety \(X\) that satisfies the Gamma conjecture II \cite{10.1215/00127094-3476593}. We expect equivariant parameters do not affect the asymptotic behavior as \(t \to \infty\), and the fundamental solution of the quantum differential equation should again produces a quasi-convergent path corresponding to a semiorthgonal decomposition \[D^{b}_{\bT}(X) = \langle \cE_{1} \otimes \Rep(\bT), \cdots, \cE_{n} \otimes \Rep(\bT) \rangle\] as in the projective-space case.

Finally, in \Cref{sec:relat-with-birat} we explore connections with classcial birational geometry, based on \cite{halpernleistner2024noncommutativeminimalmodelprogram}. For finite groups $G$, we show that under the assumptions of the \Cref{intro:proposal} and \Cref{conjecture:1}, the asscoiated D-equivalence conjecture holds (\Cref{cor:D-equivalence conj}) and one-way imiplication of Dubrovin's conjecture is satisfied (\Cref{pro:Dubrovin's conj}).

\subsection{Contents}
\begin{sloppypar}
In \Cref{sec:pre}, we review basic concepts on stability conditions on triangulated categories, derived categories of equivariant sheaves, and semiorthogonal decompositions. In \Cref{sec:G-NMMP}, we introduce the framework of $G$-NMMP, which extends the classical NMMP by incorporating actions of a reductive algebraic group $G$ on projective varieties and their derived categories. In \Cref{sec:induce paths for finite groups}, we develop techniques for inducing quasi-convergent paths in $\Stab(D^b_G(X))$ in the case of finite groups as required in \Cref{proposal:main} from the NMMP. In \Cref{sec: paths in T-stab}, we introduce the notion of $\bT$-stability conditions on $\bT$-categories and establish the deformation property of spaces of $\bT$-stability conditions. Then, we construct quasi-convergent paths in spaces of $\bT$-stability conditions to fulfill the requirements of \Cref{proposal:main} for equivariant projective spaces from small quantum cohomology. In \Cref{sec:relat-with-birat}, we explain how \Cref{conjecture:1} and \Cref{proposal:main} extend some non-equivariant birational results to the \(G\)-equivariant setting for finite groups.
\end{sloppypar}

\subsection*{Acknowledgement}
D. W. is grateful to Atsushi Takahashi for his support throughout his research. N. Z. would like to thank Will Donovan for support during the work, and Tomohiro Karube and Yukinobu Toda for hospitality and fruitful discussions during author's visit in Kavli, IPMU. We also thank Yiran Cheng, Fabian Haiden, Koshiro Murai and Tianle Mao for helpful discussions. D. W. is supported by JSPS KAKENHI KIBAN(S) 21H04994. N. Z. is supported by Yau Mathematical Science Center, Tsinghua University.

%=========================================================
\section{Preliminaries}
\label{sec:pre}
In this section, we recall basic concepts of stability conditions on triangulated categories, derived categories of equivariant sheaves, and semiorthogonal decompositions, based on \cite{Bondal1995SemiorthogonalDF,Bondal2002DerivedCO,MR2373143,elagin2015equivarianttriangulatedcategories,Cotti2019EquivariantQD,Beckmann2020OnED}.

\subsection{Bridgeland stability conditions}
\label{sec:pre on stab}
Let $\cD$ be a triangulated category and $K(\cD)$ the Grothendieck group of $\cD$, which may be of infinite rank. A \emph{(Bridgeland) pre-stability condition} $\sigma=(Z, \mathcal P)$ on a triangulated category $\mathcal D$ is characterized by a group homomorphism $Z: K(\mathcal D)\to \mathbb C$, termed the \emph{central charge}, and a collection of full additive subcategories $\mathcal P(\phi)\subset \mathcal D$ for each $\phi\in \mathbb R$, called the \emph{slicing}. This pair is required to satisfy the following axioms:
\begin{enumerate}
\item[(a)] If $0\ne E\in \mathcal P(\phi)$, then $Z(E) \in \mathbb R_{>0}\cdot{\rm exp}(\textbf i\pi\phi)$,
\item[(b)] For all $\phi\in \mathbb R$, $\mathcal P(\phi +1) = \mathcal P(\phi)[1]$,
\item[(c)] If $\phi_1>\phi_2$ and $A_i\in \mathcal P(\phi_i)\,(i=1,2)$, then ${\rm Hom}_{\mathcal D}(A_1, A_2) = 0$,
\item[(d)] For every nonzero object $E\in \mathcal D$, there exists a finite sequence of real numbers
\[
\phi_1>\phi_2>\dots>\phi_m
\]
and a collection of triangles called \emph{Harder-Narasimhan filtration}
\begin{center}
\begin{tikzcd}
0=E_0 \arrow[rr] &                        & E_1 \arrow[ld] \arrow[r] & \dots \arrow[r] & E_{m-1} \arrow[rr] &                        & E_m=E \arrow[ld] \\
                 & A_1 \arrow[lu, dashed] &                          &                 &                    & A_m \arrow[lu, dotted] &                 
\end{tikzcd}
\end{center}
with $A_i\in\mathcal P(\phi_i)$ for all $1\leq i\leq m$.
\end{enumerate}

Let $E$ be a nonzero object in $\mathcal D$ that admits a Harder-Narasimhan filtration as described in axiom (d). We associate two numbers to $E$: 
\[
\phi^+_{\sigma}(E):=\phi_1,\quad \phi^-_{\sigma}(E):=\phi_m,
\]
 where $\phi_1$ and $\phi_m$ are the first and last phases appearing in the filtration. An object $E\in \mathcal P(\phi)$ for some $\phi\in \mathbb R$ is called \emph{semistable}, and in such a case, $\phi = \phi^{\pm}_{\sigma}(E)$. Moreover, if $E$ is a simple object in $\mathcal P(\phi)$, it is said to be stable. For an interval $I$ in $\mathbb R$, we define $\mathcal P(I)$
 \[
 \mathcal P(I) = \{E\in \mathcal D\ |\ \phi^{\pm}_{\sigma}(E)\in I\}\cup\{0\}.
 \]
Consequently, for any $\phi\in \mathbb R$, both $\mathcal P[\phi, \infty)$ and $\mathcal P(\phi, \infty)$ are t-structures on $\mathcal D$ with hearts $\cP((\phi,\phi+1])$ and $\phi((\phi,\phi+1])$, respectively. According to \cite{MR2373143}, each category $\cP(\phi)\subset\cD$ is abelian, and for any interval $I\subset\mathbb R$ of length less than 1, the subcategory $\cP(I)\subset\cD$ is quasi-abelian. A stability condition $\sigma$ is called \emph{locally-finite} if there exists $\eta>0$ such that for every $t>\mathbb R$, the quasi-abelian category $\cP((t-\eta,t+\eta))\subset\cD$ is of finite length. In what follows, we restrict our attention to pre-stability conditions that fulfill the local-finiteness property. A pre-stability condition that satisfies the local-finiteness property is referred to as a \emph{ stability condition}. We denote by $\Stab(\cD)$ the set of stability conditions on $\cD$. The main result of \cite{MR2373143} asserts that $\Stab(\cD)$ carries a natural complex structure:
\begin{theorem}[{\cite[Theorem 1.2]{MR2373143}}]
\label{deformation}
The space of stability conditions $\mathrm{Stab}(\mathcal D)$ has the structure of a complex manifold, and the map
\[
\mathrm{Stab}(\mathcal D)\to\mathrm{Hom}_{\mathbb Z}(K(\cD),\mathbb C)
\]
that sends a stability condition to its central charge is a local isomorphism.
\end{theorem}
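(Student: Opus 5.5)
The statement is Bridgeland's theorem, so I would reproduce the argument of \cite{MR2373143}. One caveat comes first. For $K(\cD)$ of infinite rank, the honest content is slightly weaker than stated: each connected component $\Sigma\subset\Stab(\cD)$ maps by $\sigma\mapsto Z$ locally homeomorphically onto an open subset of a linear subspace $V(\Sigma)\subset\mathrm{Hom}_{\mathbb Z}(K(\cD),\mathbb C)$. That subspace is a Banach-type space, so one gets a manifold modelled on it. The "local isomorphism" in the theorem should be read in this sense.

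\textbf{Topology and the subspace $V(\Sigma)$.} I first put a generalized metric on the set of slicings,
\[
d(\cP,\cQ)=\sup_{0\neq E}\max\bigl(|\phi^+_{\cP}(E)-\phi^+_{\cQ}(E)|,\;|\phi^-_{\cP}(E)-\phi^-_{\cQ}(E)|\bigr).
\]
For $\sigma=(Z,\cP)$ and $U\in\mathrm{Hom}(K(\cD),\mathbb C)$ I set
\[
\|U\|_\sigma=\sup\{|U(E)|/|Z(E)| : E \text{ $\sigma$-semistable}\}.
\]
I topologize $\Stab(\cD)$ by the metric $d(\cP,\cQ)+\|Z-W\|_\sigma$. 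The first step is to show that the norms $\|\cdot\|_\sigma$ and $\|\cdot\|_\tau$ are equivalent when $\sigma,\tau$ lie in the same component. This is done by comparing semistable objects of $\sigma$ and $\tau$ through their HN factors when $d(\cP,\cQ)$ is small. Consequently $V(\Sigma)=\{U:\|U\|_\sigma<\infty\}$ is independent of $\sigma\in\Sigma$, and the central charge map $\Sigma\to V(\Sigma)$ is continuous.

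\textbf{Local injectivity.} If $\sigma=(Z,\cP)$ and $\tau=(Z,\cQ)$ have the same central charge and $d(\cP,\cQ)<1$, then $\cP=\cQ$. To prove this, take $E\in\cP(\phi)$ and consider its $\tau$-HN filtration. All its $\tau$-phases lie in $(\phi-1,\phi+1)$. Hom-vanishing, together with the fact that the central charges of the factors add up to $Z(E)$, which lies on the ray of phase $\phi$, forces the filtration to be trivial. Hence the central charge map is injective on balls of radius $<1$.

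\textbf{Deformation (main obstacle).} Given $\sigma$ and $W$ with $\|W-Z\|_\sigma<\sin(\pi\epsilon)$ for some small $\epsilon$ (say $\epsilon<1/8$, chosen below the local-finiteness constant $\eta$), I construct $\tau=(W,\cQ)$ with $d(\cP,\cQ)<\epsilon$. For each $\psi$, I consider the thin category $\cP((\psi-2\epsilon,\psi+2\epsilon))$, which is quasi-abelian and of finite length by local finiteness. I define $\cQ(\psi)$ to be the objects of $\cP((\psi-\epsilon,\psi+\epsilon))$ that are $W$-semistable of phase $\psi$ within this thin category, meaning there are no strict subobjects of larger $W$-phase.

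I then need three things.
\begin{enumerate}
\item Well-definedness: the definition is independent of the enveloping interval chosen.
\item Hom-vanishing: this follows from the phase bounds.
\item Existence of HN filtrations: this is the hardest step.
\end{enumerate}
For HN filtrations I first refine the $\sigma$-HN filtration of an arbitrary object into factors lying in thin categories. In each thin category, finite length gives maximal destabilizing subobjects and hence $W$-HN filtrations. I then splice these pieces, controlling phases so that the resulting sequence of phases is strictly decreasing; this is Bridgeland's Section 7 argument. Local finiteness of $\tau$ follows because a thin $\cQ$-interval sits inside a thin $\cP$-interval. Finally, $\|W-Z\|_\sigma$ small gives $\|W\|_\sigma<\infty$, so $W\in V(\Sigma)$, and every $W$ in a small $\|\cdot\|_\sigma$-ball is attained. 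Combined with local injectivity, this makes the central charge map a local homeomorphism onto an open subset of $V(\Sigma)$, and the complex structure is pulled back along it.
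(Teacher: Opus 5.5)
Your outline is Bridgeland's own proof: the generalized metric on slicings, equivalence of the norms $\|\cdot\|_\sigma$ along a component, injectivity of the central charge map when $d(\cP,\cQ)<1$, and the Section~7 deformation argument via thin quasi-abelian subcategories. That is all the paper uses, since it simply cites Bridgeland's Theorem~1.2, and your caveat that the precise statement is a local homeomorphism onto an open subset of $V(\Sigma)$ is correct (in the paper's finite-rank setting with the support property, $V(\Sigma)$ is all of $\mathrm{Hom}_{\mathbb Z}(K(\cD),\mathbb C)$).

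A few quantitative points need tightening:
\begin{enumerate}
\item Choose $\epsilon$ so that $\cP((t-4\epsilon,t+4\epsilon))$ has finite length for every $t$, as Bridgeland does. Merely taking $\epsilon<\eta$ does not guarantee that your width-$4\epsilon$ thin categories have finite length.
\item State injectivity on balls of slicing-radius $<1/2$, so that any two points in the ball are at distance $<1$.
\item The expression $d(\cP,\cQ)+\|Z-W\|_\sigma$ is not symmetric, so it defines Bridgeland's neighbourhood basis $B_\epsilon(\sigma)$ rather than a metric.
\end{enumerate}
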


Now suppose $K(\cD)$ is free of finite rank, i.e. $K(\cD)\cong\mathbb Z^{\oplus n}$ for some $n$. In this case, one can formulate the \emph{support property} (cf. \cite{kontsevich2008stabilitystructuresmotivicdonaldsonthomas}) to pre-stability conditions. Fix a norm $\|\cdot\|$ on $K(\cD)\otimes\mathbb R$. 
A pre-stability condition $\sigma=(Z,\mathcal P)$ is said to satisfy the support property if there exists a constant $C_{\sigma}>0$ such that for every $\sigma$-semistable objects $0\ne E\in\cD$,
\begin{equation}
\label{support_property}
\Vert E\Vert\le C_{\sigma}\vert Z(E)\vert.
\end{equation}
Throughout this paper, the primary context is the bounded derived category of coherent sheaves on  a smooth projective variety. In such cases, the Grothendieck group is taken to be the numerical Grothendieck group, and the norm is defined on $K(\cD)\otimes\mathbb R$, where $K(\cD)$ now denotes the numerical Grothendieck group. By \cite{Bridgelandk3surface}, the support property implies the local-finiteness condition. Moreover, the space of pre-stability conditions with support property also admits a complex structure as described in \Cref{deformation}. In light of this, we refer to such pre-stability conditions simply as stability conditions and continue to denote their space by $\Stab(\cD)$.

Let $\mathrm{Aut}(\mathcal D)$ be the group of autoequivalences of $\mathcal D$, let $\mathrm{GL}^+_2(\mathbb R)$ be the group of elements in $\mathrm{GL}_2(\mathbb R)$ with positive determinant and let $\widetilde{\mathrm{GL}}_2^+(\mathbb R)$ be the universal cover of $\mathrm{GL}^+_2(\mathbb R)$.  As described in \cite{MR2373143}, there are natural actions of $\mathrm{Aut}(\mathcal D)$ and a right action of the group $\widetilde{\mathrm{GL}}_2^+(\mathbb R)$ on $\mathrm{Stab}(\mathcal D)$.
Explicitely, for $T\in\mathrm{Aut}(\mathcal D)$ and a stability condition $\sigma=(Z,\cP)$, we set 
\[
T\sigma=(Z',\cP'),\quad Z'=ZT^{-1},\quad \cP'_{\phi}=T\cP_{\phi}.
\]

For any element $g=(T,f)\in\widetilde{\mathrm{GL}}_2^+(\mathbb R)$, we define
\[
\sigma[g]=(Z[g],\cP[g]), \quad Z[g]=T^{-1}Z,\quad \cP[g]_{\phi}=\cP_{f(\phi)}. 
\]
In particular, for $a+\i b\in\mathbb C\subset\widetilde{\mathrm{GL}}_2^+(\mathbb R)$, we have
\[
Z[a+\i b]=e^{-\i\pi a+\pi b}Z, \quad \cP[a+\i b]_{\phi}=\cP_{\phi+a}.
\]

\subsection{G-equivarinat triangulated categories}
Let $\cD$ be a pre-additive category, linear over a field $\k$, and let $G$ be an algebraic group. We first treat the case when $G$ is finite. In this case, we assume that $(\char(\k),|G|)=1$ unless $\k$ is algebraically closed. A \emph{(right) action} of $G$ on $\cD$ consists a family of autoequivalences $(\phi_g:\cD\to\cD)_{g\in G}$ together with natural isomorphisms 
\[
(\varepsilon_{g,h}:\phi_g\phi_h\to\phi_{hg})_{g,h\in G}
\]
satisfying the compatibility condition that for all $f,g,h\in G$, the following diagram commutes:
\[
\begin{tikzcd}
\phi_{f}\phi_g\phi_h \arrow[r, "{\varepsilon_{g,h}}"] \arrow[d, "{\varepsilon_{f,g}}"'] & \phi_f\phi_{hg} \arrow[d, "{\varepsilon_{f,gh}}"] \\
\phi_{gf}\phi_h \arrow[r, "{\varepsilon_{gf,h}}"]                                       & \phi_{hgf}                                       
\end{tikzcd}
\]
Given such an action, a $G$-\emph{equivariant object} in $\cD$ is a pair $(F,(\theta_g)_{g\in G})$ where $F\in\cD$ and each $\theta_g\colon F\to\phi_g(F)$ is an isomorphism, called a  $G$-\emph{linearisation}, such that all for all $g,h\in G$, the diagram
\[
\begin{tikzcd}
F \arrow[r, "\theta_g"] \arrow[d, "\theta_{hg}"'] & \phi_g(F) \arrow[d, "\phi_g(\theta_h)"]              \\
\phi_{hg}(F)                                      & {\phi_g(\phi_h(F))} \arrow[l, "{\varepsilon_{g,h}}"']
\end{tikzcd}
\]
commutates. A \emph{morphism of } $G$-\emph{equivariant objects} from $(F_1,(\theta^1_g))$ to $(F_2,(\theta_g^2))$ is a morphism $f:F_1\to F_2$ in $\cD$ for which the diagrams 
\[
\begin{tikzcd}
F_1 \arrow[r, "\theta_g^1"] \arrow[d, "f"'] & \phi_g(F_1) \arrow[d, "\theta_g(f)"] \\
F_2 \arrow[r, "\theta_g^2"]                 & \phi_g(F_2)                         
\end{tikzcd}
\]
commute for all $g\in G$. The category of $G$-equivariant objects in $\cD$ is denoted by $\cD_G$. For a subgroup $H\subset G$, the \emph{restriction functor}
\[
\mathrm{Res}_H^G\colon\cD_G\to\cD_H
\]
is defined by $(F,(\theta_g))\mapsto(F,(\theta_g\vert_{H}))$. Conversely, the \emph{induction functor} 
\[
\mathrm{Ind}^G_H:\cD_H\to\cD_G\]
sends $(F,(\theta_h))$ to 
\[\left(\bigoplus_{[g]\in G/H}\phi_gF,(\theta_g^G)\right),\] 
where for each coset $[g]\in G/H$, the restriction of $\theta_g^G$ to the summand $\phi_{g_i}F$ is given by the composition
\[
\phi_{g_i}F\xrightarrow[]{\phi_{g_i}\theta_h}\theta_{g_i}\theta_h
F\xrightarrow[]{\varepsilon_{g_i,h}}\phi_{g_ih}F\xrightarrow[]{\varepsilon_{g,g_j}^{-1}}\phi_{g}\phi_{g_j}F,
\]
with $g_j$ and $h$ determined uniquely by $gg_j=g_ih$.

\begin{example}
Let $G$ be a finite group acting on a scheme $X$, and for each $g\in G$, set $\phi_g:=g^{\ast}:\mathrm{Coh}(X)\to\mathrm{Coh}(X)$. The canonical ismorphisms:
\[
\phi_{g}\phi_h=g^{\ast}h^{\ast}\xrightarrow[]{\sim}(hg)^{\ast}=\phi_{hg}.
\]
provides $\varepsilon_{g,h}$ defining an action of $G$ on $\mathrm{Coh}(X)$. In this setting, $G$-equivariant objects are precisely $G$-equivariant coherent sheaves (G-sheaves). We write 
\[
\mathrm{Coh}_G(X):=(\mathrm{Coh}(X))_G,\quad D^b_G(X):=D^b(\mathrm{Coh}_G(X)).
\]
If $\k=\overline{\k}$ and $G$ acts freely on a smooth projective variety $X$ over $\k$, then the quotient map $\pi:X\to X/G$ induces equivalences  
\[
\mathrm{Coh}(X/G)\cong\mathrm{Coh}_G(X),\quad D^b_G(X)\cong D^b(X/G),
\]
where the functor sends $F\in\mathrm{Coh}(X/G)$ to $(\pi^{\ast}F,(\theta_g))$ with 
\[
\theta_g:\pi^{\ast}F=\bigoplus_{h\in G}h^{\ast}F\xrightarrow[]{\sim}\bigoplus_{h\in G}g^{\ast}(h^{\ast}F).
\]
\end{example}

Now let $G$ be an algebraic group acting on a vareity $X$. Write $\mu: G \times G \to G$ for multiplication and $a: G \times X \to X$ for the action of $G$ on $X$. We denote by $p_i$ the projection onto the $i$-th factor of products such as $G \times G$, $G \times X$, or $G \times G \times X$. Additionally, $p_{12}$ and $p_{23}$ denote projections onto the first two or last two factors of $G \times G \times X$ or $G \times G \times G$, respectively. A $G$-sheaf on $X$ is a sheaf $F$ on $X$ together with an isomorphism $\theta:p_2^{\ast}F\to a^{\ast}F$ of sheaves on $G\times X$ such that the following diagram on $G\times G\times X$ commutes:
\[
\begin{tikzcd}
P_3^{\ast}F \arrow[r, "p_{23}^{\ast}\theta"] \arrow[d, "(\mu\times\mathrm{Id})^{\ast}\theta"'] & (ap_{23})^{\ast}(F) \arrow[d, "(\mathrm{Id}\times a)^{\ast}{\theta}"]              \\
(a(\mu\times\mathrm{Id}))^{\ast}F                                     & (a(\mathrm{Id}\times a))^{\ast}F \arrow[l, Rightarrow, no head, "{\varepsilon}"']
\end{tikzcd}
\]
 where $\varepsilon$ is the canonical isomorphism of functors. We denote the abelian category of $G$-sheaves by $\mathrm{Coh}_G(X)$. \emph{A morphism of $G$-equivariant objects within} $\mathrm{Coh}_G(X)$ from $(F_1,\theta_1)$ to $(F_2,\theta_2)$ is a morphism $f:F_1\to F_2$ such that the diagram
 \[
\begin{tikzcd}
p_2^{\ast}F_1 \arrow[r, "\theta_1"] \arrow[d, "p_2^{\ast}f"'] & a^{\ast}F_1\arrow[d, "a^{\ast}f"] \\
p_2^{\ast}F_2 \arrow[r, "\theta_2"]                 & a^{\ast}F_2                       
\end{tikzcd}
\]
commutes. The bounded derived category of $G$-equivariant sheaves is then defined as $D^b_G(X)\coloneq D^b(\mathrm{Coh}_G(X))$.

Let $\pi:X\to\mathrm{Spec}(\mathbb C)$ be the structural morphism. It induces functors:
\[
R\pi_{\ast}:D^b(X)\to D^b(\mathbb C),\quad R\pi^G_{\ast}:D^b_G(X)\to D^b(\Rep(G)),
\]
where $\Rep(G)$ is the category of finite- dimensional complex representations. For two objects $\cE,\cF\in D^b_G(X)$, we set
\[
\Hom^{\bullet}_G(\cE,\cF):=R\pi_{\ast}^G(E^{\ast}\otimes F)\in D^b(\Rep(G)),
\]
where $E^{\ast}:=R{\sheafhom}(E,\cO_X)$ is the ordinary dual sheaf of $E$. For any algebraic subgroup $H\subset G$, the restrction functor $\mathrm{Res}^G_H:D^b_G(X)\to D^b_H(X)$ is defined analogously to the finite-group case. When the index $[G:H]$ is finite, the induciton functor $Ind^G_H$ is defined as the exact right adjoint of $\mathrm{Res}^G_H$.

\subsection{Semiorthogonal decompositions and $G$-exceptional collections}
Recall that a \emph{semiorthogonal decomposition} of a triangulated category $\cD$ is a collection $\mathcal{D}_1,\dots,\mathcal{D}_n$ of full triangulated subcategories such that 
\begin{enumerate}[$(1)$]
\item  $\Hom_{\mathcal{D}}(\mathcal{D}_i,\mathcal{D}_j)=0$ for all $1\le j<i\le n$;
\item $\mathcal{D}=\mathrm{thick}\langle\mathcal{D}_1,\dots,\mathcal{D}_n\rangle$, i.e. $\mathcal{D}$ is the smallest triangulated subcategory containing $\mathcal{D}_1,\dots,\mathcal{D}_n$.
\end{enumerate}
A semiorthogonal decomposition is called \emph{polarizable} if each factor admits a stability condition. 
%===================================================================
We write $\mathcal{D}=\langle\mathcal{D}_1,\dots,\mathcal{D}_n\rangle$ for such a decomposition. A full triangulated subcategory $\mathcal{D}_1\subset\mathcal{D}$ is called \emph{left admissible} (resp. \emph{right admissible}) if the inclusion functor $i\colon\mathcal{D}_1\to\mathcal{D}$ admits a left adjoint $i^{\ast}$ (resp. a right adjoint $i^{!}$). A left admissible subcategory $\mathcal{D}_1\subset\mathcal{D}$ induces a semiorthogonal decomposition $\mathcal{D}=\langle\mathcal{D}_1, ^{\perp}\mathcal{D}_1\rangle$, where
\[
^{\perp}{\mathcal{D}_1}:=\{E\in\mathcal{D}\mid\Hom(E,\mathcal{D}_1[n])=0 \text{ for all } n\in\mathbb Z\}
\]
is the \emph{left orthogonal} to $\mathcal{D}_1$. Dually, 
A right admissible subcategory $\mathcal{D}_1\subset\mathcal{D}$ yields  $\mathcal{D}=\langle\mathcal{D}_1^{\perp},\mathcal{D}_1\rangle$, with \emph{right orthogonal}
\[
\mathcal{D}_1^{\perp}:=\{E\in\mathcal{D}\mid\Hom(\mathcal{D}_1[n],E)=0 \text{ for all } n\in\mathbb Z\}.
\]
A subcategory is \emph{admissible} if it is both left and right admissible. 

Now let $X$ be a smooth projective variety with an action of an algebraic group $G$. Denote by $\Rep(G)$ the category of finite-dimensional complex representations of $G$.

\begin{definition}[{\cite[Definition 2.1]{Cotti2019EquivariantQD}}]
    An object $\cE\in D^b_G(X)$ is called $G$-\emph{exceptional} if 
    \[
    \mathrm{Hom}^{\bullet}_G(\cE,\cE)\cong\mathbb C_{G},
    \]
    where $\mathbb C_G$ denotes the object of $D^b(\Rep(G))$ given by the trivial one-dimensional representation of $G$, concentrated in degree zero.  An ordered collection $(\cE_1,\dots,\cE_n)$ is a $G$-\emph{exceptional collection} if
    \begin{enumerate}
        \item each $\cE_i$ is $G$-exceptional;
        \item $\mathrm{Hom}^{\bullet}_G(\cE_j,\cE_i)=0$ for all $j>i$.
    \end{enumerate}
\end{definition}
For $\cE\in D^b_G(X)$ and $V^{\bullet}\in D^b(\Rep(G))$, the tensor product $E\otimes V^{\bullet}$ is defined as the object 
\[
\bigoplus_iE[-i]\otimes V^i,
\]
which extends the usual tensor product between  $\mathrm{Coh}_G(X)$ and $\Rep(G)$. A $G$-exceptional collection $(\cE_1,\dots,\cE_n)$ is called $G$-full if it induces a semiorthogonal decomposition 
\[
D^b_G(X)=\langle \cE_1\otimes D^b(\Rep(G)),\dots,\cE_n\otimes D^b(\Rep(G))\rangle.
\]

\section{$G$-noncommutative minimal model program}
\label{sec:G-NMMP}

In this section, we introduce the $G$-equivariant noncommutative minimal model program ($G$-NMMP). This framework extends the noncommutative minimal model program (NMMP) by incorporating actions of a reductive algebraic group $G$ on projective varieties and their derived categories, following \cite{halpernleistner2024noncommutativeminimalmodelprogram}. Throughout this section, we assume $G$ is either a finite group or a connected reductive algebraic group.

\subsection{Quasi-convergent paths}

Let $\cD$ be a triangulated category and consider a continuous path $\sigma_{\bullet}:[t_0,\infty)\to\Stab(\cD)$. For each $t\in[t_0,\infty)$, we write $\sigma_t:=(Z_t,\cP_t)$. For an object $E\in\cD$, we denote $\phi^+_t(E)=\phi_{\sigma_t}^+(E)$ and $\phi_t^-(E)=\phi_{\sigma_t}^-(E)$. Let $A_1^t,\dots,A_{k_t}$ be the Harder-Narasimhan factors of $E$ with respect to $\sigma_t$. The \emph{mass} of $E$ at $t$ is 
\[
m_t(E):=\sum_{i=1}^{k_t}\vert Z_t(A_i^t)\vert,
\]
and we define the \emph{average phase} of $E$ at $t$ by  
\[
\phi_t(E):=\frac{1}{m_t(E)}\sum_{i=1}^{k_t}\phi^+_t(A^t_i)\cdot |Z_t(A_i^t)|.
\] 

Recall that for a fixed $t\in[t_0,\infty)$, an object $0\ne E\in\cD$ is $\sigma_t$-semistable if and only if $\phi^+_t(E)=\phi^-_t(E)$. The following notion of semistability along the whole path is therefore natural:

\begin{definition}
A non zero object $E\in\cD$ is called \emph{limit semistable } (with respect to $\sigma_{\bullet}$) if:
\[
\lim\limits_{t\to\infty}(\phi^+_t(E)-\phi^-_t(E))=0.
\]
\end{definition}

\begin{definition}[{\cite{halpernleistner2024stabilityconditionssemiorthogonaldecompositions}}]
A path $\sigma_{\bullet}$ in the space of stability conditions is called \emph{quasi-convergent} if:
\begin{enumerate}
    \item For any $E\in\cD$ there exists a sequence of morphisms 
    \[0=E_0\to E_1\to E_2\to\cdots\to E_{n-1}\to E_n=E\]
    whose cones $G_i=\mathrm{cone}(E_{i-1}\to E_i)$ are limit semistable and satisfy 
    \[
    \mathop{\lim\inf}\limits_{t\to\infty}(\phi_t(G_i)-\phi_t(G_{i+1}))>0.
    \]
    This sequence is called a \emph{limit semistable filtration} of $E$.
    \item For a limit semistable object $E\in D$, we define its \emph{average logarithm} by 
    \[
    l_t(E):=\ln(|Z_t(E)|)+i\pi\phi_t(E).
    \]
    For two limit semistable objects $E,F$, we set $l_t(E|F):=l_t(E)-l_t(F)$ and require that the limit
    \[
    \lim\limits_{t\to\infty}\frac{l_t(E|F)}{1+|l_t(E|F)|}
    \]
  exists.
\end{enumerate}
\end{definition}

\begin{definition}[{\cite[Definition 2.16]{halpernleistner2024stabilityconditionssemiorthogonaldecompositions}}]
Given two limit semistable objects $E,F$, we write 
\begin{enumerate}
\item $E\sim^{\inf}F$ if $
\mathop{\lim\inf}\limits_{t\to\infty}(\phi_t(E)-\phi_t(F))<+\infty$;
\item $E<^{\inf}F$ if $\mathop{\lim\inf}\limits_{t\to\infty}(\phi_t(E)-\phi_t(F))=+\infty$.
\end{enumerate}  
\end{definition}

By {\cite[Lemma 2.17]{halpernleistner2024stabilityconditionssemiorthogonaldecompositions}}, $\sim^{\inf}$ is an equivalence relation on the set $\cP$ of limit semistable objects. The set of equivalence classes, denoted by $\cP_{\bullet}/\sim^{\inf}$, is finite (see \cite[Example 2.42 and Lemma 2.35]{halpernleistner2024stabilityconditionssemiorthogonaldecompositions}). Moreover, $<^{\inf}$ induces a total order on $\cP/\sim^{\inf}$ by {\cite[Lemma 2.17]{halpernleistner2024stabilityconditionssemiorthogonaldecompositions}}. For $E\in\cP_{\bullet}$, let $\cD^{E}$ be the full subcategory of objects whose limit semistable Harder-Narasimhan factors are $\sim^{\inf}$ equivariant to $E$. The collection $\{\cD^{E}\mid E\in\cP_{\bullet}/\sim^{\inf}\}$ is finite and totally ordered by $<^{\inf}$.

Consider germs of real $C^0$ functions at infinity, i.e. elements of \[C^0_{\infty}:=\lim\limits_{\longrightarrow}C^0((a,\infty),\mathbb R).\] 
Write  $f=g$ if $\lim\limits_{t\to\infty}f(t)-g(t)=0$ and $f<g$ if $\lim\limits_{t\to\infty}f(t)-g(t)<0$. Given $f\in C^0_{\infty}$, denote by $\cP_{\bullet}(f)\subset\cD$ the full subcategory containing $0$ together with all limit semistable objects $E\in\cD$ for which $\phi_t^{\pm}(E)=f$. Further, let $\cP_{\bullet}(>f)$ (resp. $\cP_{\bullet}(\le f)$) be the extension-closed subcategory of $\cD$ generated by $\{\cP_{\bullet}(g)\colon g>f\}$ (resp. $\{\cP_{\bullet}(g)\colon g\le f\}$).

\begin{definition}[{\cite[Definition 3.4]{halpernleistner2024stabilityconditionssemiorthogonaldecompositions}}]
Let $\overrightarrow{\sigma}=(\sigma_1,\dots,\sigma_n)\in\prod^n_{i=1}\Stab(\cD_i)$ and write $\sigma_i=(Z_i,\cP_i)$ for each $i$. Set $\cA_i=\cP_i((0,1])$. We say that a stability condition $\sigma=(Z,\cP)\in\Stab(\cD)$ is \emph{glued} from $\overrightarrow{\sigma}$ if 
\begin{enumerate}
    \item $\Hom^{\le}_{\cD}(\cA_i,\cA_j)=0$;
    \item $\cA_{\sigma}:=\cP((0,1])=\langle\cA_1,\dots,\cA_n\rangle$;
    \item $Z\vert_{\cD_i}=Z_i$.
\end{enumerate}
\end{definition}

\begin{theorem}[{\cite[Proposition 2.20 and Theorem 3.15]{halpernleistner2024stabilityconditionssemiorthogonaldecompositions}}]
\label{SODsfromQuasipath}
For any $E\in\cP_{\bullet}$, the subcategories $\cD^{E}$ are thick triangulated subcategories that give a semiorthogonal decomposition
\[
\cD=\langle \cD^{E}\mid E\in\cP_{\bullet}/\sim^{\inf}\rangle,
\]
where the order of components is induced $<^{\inf}$. Furthermore, if $\cD$ is smooth, proper and idempotent complete, then every polarizable semiorthogonal decomposition of $\cD$ can be recovered by a quasi-convergent path based by the above construction
\end{theorem}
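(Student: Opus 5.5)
The plan is to prove the two halves of \Cref{SODsfromQuasipath} separately: first that a quasi-convergent path produces the semiorthogonal decomposition, then that every polarizable semiorthogonal decomposition arises in this way.

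\textbf{Step 1: each $\cD^{E}$ is thick and triangulated.}
\begin{enumerate}
\item Closure under shifts is immediate, since $\phi_t(F[1])=\phi_t(F)+1$.
\item Closure under summands follows because the limit semistable factors of a summand are among those of the whole object.
\item For closure under cones, take $A\to B\to C$ with $A,B\in\cD^{E}$. Comparing masses and the phase bounds $\phi^\pm_t$ along the triangle, every limit semistable factor $G$ of $C$ should satisfy $\phi_t(G)\le \phi^+_t(A[1])$ or $\phi_t(G)\le \phi^+_t(B)$, up to bounded error.
\item Symmetric lower bounds via $\phi^-_t$ also hold. Hence $\phi_t(G)-\phi_t(E)$ stays bounded, so $G\sim^{\inf}E$.
\end{enumerate}
Here I use that $\sim^{\inf}$ is an equivalence relation and that $<^{\inf}$ totally orders the finitely many classes, as in \cite[Lemma 2.17]{halpernleistner2024stabilityconditionssemiorthogonaldecompositions}. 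The quasi-convergence condition (2), namely existence of the limit of $l_t(E|F)/(1+|l_t(E|F)|)$, is what prevents phase differences from oscillating between bounded and unbounded regimes. This is why ``bounded'' and ``$\liminf$ bounded'' agree on limit semistable objects.

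\textbf{Step 2: semiorthogonality.} Let $F\in\cD^{E'}$ and $F'\in\cD^{E}$ with $E<^{\inf}E'$ in the appropriate direction. Then $\phi^-_t(F)-\phi^+_t(F')\to+\infty$. For $t$ large this difference exceeds any fixed integer $n$, so axiom (c) gives $\Hom(F,F'[n])=0$ for $t\gg0$. Since $\Hom$ does not depend on $t$, it vanishes identically.

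\textbf{Step 3: generation.} By condition (1), each object has a limit semistable filtration. Grouping consecutive factors by their $\sim^{\inf}$-class, and using the $\liminf$ ordering of the filtration, yields a filtration whose graded pieces lie in the $\cD^{E}$, ordered by $<^{\inf}$. Hence the $\cD^{E}$ generate $\cD$.

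\textbf{Step 4: the converse.} Let $\cD=\langle\cD_1,\dots,\cD_n\rangle$ be polarizable with stability conditions $\sigma_i$.
\begin{enumerate}
\item Consider the shifted tuples $\sigma_i[a_i t]$ for $a_1>\dots>a_n$, or the $\bC$-action with imaginary parts, so that the hearts become increasingly separated.
\item For $t$ large, the gluing condition $\Hom^{\le0}(\cA_i,\cA_j)=0$ holds. Smoothness and properness of $\cD$ give finiteness of $\Hom^\bullet$ between components, which makes this work.
\item Gluing in the sense of \cite[Definition 3.4]{halpernleistner2024stabilityconditionssemiorthogonaldecompositions} then produces a continuous path $\sigma_t\in\Stab(\cD)$.
\item Objects of $\cD_i$ that are semistable for $\sigma_i$ remain semistable along the path, and the projection triangles give limit semistable filtrations.
\item Phase differences between distinct components grow linearly in $t$, so the $\sim^{\inf}$-classes are exactly the $\cD_i$. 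The limit in condition (2) exists because $l_t$ is affine in $t$ up to bounded error.
\end{enumerate}

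\textbf{Main obstacle.} I expect two points to be hardest. The first is closure of $\cD^{E}$ under cones in Step 1, i.e.\ controlling the average phases of HN factors of a cone. The second is verifying the support property of the glued stability conditions uniformly in $t$ in Step 4. For the latter, I would try bounding masses on the glued heart by the sum of the component masses, using idempotent completeness and the finite-dimensional $\Hom$ spaces.
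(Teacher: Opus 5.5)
Your Steps 2 and 3 match the paper's sketch: phase separation gives $\Hom$-vanishing, and coarsening limit semistable filtrations gives generation. Step 4 also has the paper's shape, since your shifts $\sigma_i[a_it]$ are one choice of the rescalings $z_i(t)\cdot\sigma_i$ that the paper glues. The gaps are in the places where you try to prove by hand what the paper imports from Halpern-Leistner--Jiang--Robotis (their Proposition 2.20 and Proposition 3.2).

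\textbf{Step 1.} Your cone estimate needs a two-sided bound $|\phi_t(G)-\phi_t(E)|\le K$, for all large $t$, for every limit semistable factor $G$ of $A$ and $B$. You justify this by saying condition (2) makes bounded and $\liminf$-bounded phase differences agree, but it does not. Convergence of $l_t(E|F)/(1+|l_t(E|F)|)$ only controls $\Im l_t$ relative to $|l_t|$, and when the limit is $\pm1$ the phase difference can oscillate. For example, $\Re l_t=t^2$, $\Im l_t=t\sin^2 t$ satisfies (2), while $\phi_t(E)-\phi_t(F)$ has $\liminf$ equal to $0$ and is unbounded. Neither the $\liminf$-definition of $\sim^{\inf}$ nor its being an equivalence relation gives a uniform bound inside a class, so closure under cones is not proved this way.

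You can bypass this. Steps 2 and 3 only use that distinct classes separate with phase differences tending to infinity, so the thick hulls $\cC^E$ of the $\cD^E$ already form a semiorthogonal decomposition. For $X\in\cC^E$, the coarsened limit filtration is then the SOD filtration, so its pieces in the other classes vanish. A piece with factors cannot vanish: otherwise its top factor $G_k$ would satisfy $G_k[1]\cong Y'$, with $Y'$ an extension of the later factors, which forces $\phi_t(G_k)+1\le\phi_t(G_{k+1})+o(1)$, which is absurd. Hence $\cC^E=\cD^E$.

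\textbf{Step 4.} The missing input is not the support property but a \emph{uniform} vanishing. You need a constant $N$ with $\Hom(\cP_i(\phi),\cP_j(\psi))=0$ for all $i<j$ whenever $\phi-\psi>N$ (phases taken for $\sigma_i,\sigma_j$). Properness gives finite-dimensional $\Hom^\bullet$ for each pair of objects, which says nothing uniform over the infinitely many semistable objects. So it does not give $\Hom^{\le 0}(\cA_i^t,\cA_j^t)=0$ for large $t$, where $\cA_i^t$ is the heart of $\sigma_i[a_it]$.

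One way to get the bound uses smoothness. Write the diagonal of $\cD_i$ as a finite iterated cone (up to summands) of objects $G_k^\vee\boxtimes G_k'$, and bound the degrees of $\Hom^\bullet(G_k,A)$ via the Serre functor. This shows that a $\sigma_i$-semistable $A$ of phase $\phi$ is built in boundedly many steps from shifts of fixed objects whose phases lie within a fixed distance of $\phi$. Applying the left adjoint of $\cD_j\hookrightarrow\cD$ then bounds $\phi^-_{\sigma_j}$ of the image below by $\phi-N$, which is the required vanishing.

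Once $N$ exists, the rest is easy and your concern about masses disappears. For $t\gg0$ there is no $\Hom^{\le 1}$ from $\cA_i^t$ to $\cA_j^t$, and none in the other direction by semiorthogonality, so the glued heart is the product of the $\cA_i^t$. Glued-semistable objects are then sums of $\sigma_i$-semistable objects of equal phase, and the support property and persistence of semistability hold componentwise. The paper avoids all of this by citing the gluing proposition for general $z_i(t)$.
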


\begin{proof}[Sketch of the proof]
Let $E,F\in\cP_{\bullet}$ with $E<^{\inf}F$. By definition, for any fixed $k\in\mathbb Z$, we have $\phi^-_t(F)>\phi^+_t(E)+k$ for sufficiently large $t$. Hence $\Hom(F,E(k))=0$ which implies that 
$\Hom(\cD^{F},\cD^{E})=0$. By coarsening the limit Harder-Narasimhan filtration, every object of $\cD$ lies in the triangulated hull of the subcategories $\{\cD^{E}\}$. Since $\cP_{\bullet}/\sim^{\inf}$ is totally ordered by $<^{\inf}$, the collection $\langle \cD^{E}\mid E\in\cP_{\bullet}/\sim^{\inf}\rangle$ forms a semiorthogonal decomposition.

Now let $\cD=\langle\cD_1,\dots,\cD_n\rangle$ be a polarizable semiorthogonal decomposition and choose stability conditions $\sigma_i\in\Stab(\cD_i)$. Pick functions $z_i:[t_0,\infty)\to\mathbb C$ for $i=1,\dots,n$ such that for all $i<j$, 
\[
\lim\limits_{t\to\infty}\frac{z_j(t)-z_i(t)}{1+|z_j(t)-z_i(t)|}=e^{\i\theta_{ij}},
\]
for $\theta_{ij}\in(0,\pi)$. By \cite[Proposition 3.2]{halpernleistner2024stabilityconditionssemiorthogonaldecompositions}, the family 
$\overrightarrow{\sigma_t}:=(z_i(t)\cdot\sigma_i)_{i=1}^n$ can be glued into a quasi-convergent path $\sigma_{\bullet}\in\Stab(\cD)$ for $t\gg0$. Based on the construction, $\sigma_{\bullet}$ recovers the given semiorthogonal decomposition. Note that the choice of such a path quasi-convergent path is unique up to the $\mathbb C$-action on each $\sigma_i$; see \cite[Theorem 2.30]{halpernleistner2024stabilityconditionssemiorthogonaldecompositions}.
\end{proof}

\subsection{Equivariant quantum cohomology}
We give a brief review of equivariant quantum cohomology for projective varieties and we recommend
\cite{Anderson_Fulton_2023} for details.
Let $X$ be a smooth projective variety of dimension $m$ equipped with an action of a complex algebraic reductive group $G$. The \emph{equivariant cohomology} of $X$ is defined as the ordinary cohomology of the Borel construction:
\[
H^{\ast}_G(X)=H^{\ast}(X\times_{G}EG),
\]
where $EG$ is a contractible space with a free $G$-action and the group $G$ acts diagonally on $X\times EG$. Denote by $BG:=EG/G$ the \emph{classifying space} of $G$. The projection
\[
p_X:X\times_GEG\to BG
\]
is a fibration with fiber $X$. In particular, if $X$ is a point, we have
\[
 H^{\ast}_G(\mathrm{pt})=H^{\ast}(BG).
\]
In general, $H^{\ast}_G(X)$ is an algebra over $H^{\ast}_G(\mathrm{pt})$, which coincides with the invariants of the Weyl group acting on the characteristic classes for a maximal torus $\bT$:
\[
H^{\ast}_G(\mathrm{pt})=H^{\ast}_{\bT}(\mathrm{pt})^W.
\]
For example, if $G=\bT=(\mathbb C^{\ast})^n$, then
\[
H^{\ast}_G(\mathrm{pt})=\mathbb C[z_1,\dots,z_n].
\]
For $G=\mathrm{GL}_n(\mathbb C)$, the Weyl group is the symmetric group $S_n$, and 
\[
H^{\ast}_{\mathrm{GL}_n(\mathbb C)}(\mathrm{pt})=\mathbb C[z_1,\dots,z_n]^{S_n}.
\]
If $G$ is finite, $H^{\ast}_G(\mathrm{pt})=\mathbb C$. By the equivariant formality result of \cite{GKM97}, $H^{\ast}_\bT(X)$ is a free module of rank $\dim H^{\ast}(X)$ over $H^{\ast}_\bT(\mathrm{pt})$.

Let $\beta\in H_2(X;\mathbb Z)$. An $n$-pointed, genus $g$, degree $\beta$ \emph{prestable map} to $X$ is a morphism $f:(C,x_1,\dots,x_n)\to X$, where
\begin{enumerate}
    \item $(C,x_1,\dots,x_n)$ is a connected algebraic curve $C$ of arithmetic genus $g$ with $n$ ordered marked points $x_1,\dots,x_n\in C$, where $C$ has at most nodal singularities, and $(x_1,\dots,x_n)$ are distinct smooth points;
    \item $f_{\ast}[C]=\beta$.
\end{enumerate}
Two such prestable maps
\[
f:(C,x_1,\dots,x_n)\to X,\quad\text{ and } f':(C',x_1',\dots,x_n')\to X
\]
are isomorphic it there exists an isomorphism $\phi:(C,x_1,\dots,x_n) \to(C',x_1',\dots,x_n')$ of $n$-pointed prestable curves such that $f=f'\circ\phi$. A prestable map $f$ is called \emph{stable} if its automorphism group is finite. 

Denote by $\overline{\cM}_{g,n}(X,\beta)$ the Deligne-Mumford stack of $n$-pointed, genus $g$, degree $\beta$  stable maps. According to \cite{Behrend1995StacksOS}, $\overline{\cM}_{g,n}(X,\beta)$ is proper when $X$ is projective. We assume that either $\beta\in\mathrm{Eff}(X)$ or $2g+n>2$ so that $\overline{\cM}_{g,n}(X,\beta)$ is non-empty. Given $n$ cohomology classes
\[
\gamma_1,\dots,\gamma_n\in H^{\ast}_{G}(X;\mathbb C),
\]
and integers $d_1,\dots,d_n\in\mathbb Z_{\ge0}$,  the \emph{genus} $g$, degree $\beta$, $G$-\emph{equivariant descendant Gromov-Witten invariants} of $X$ are defined as
\[
\langle\tau_{d_1}(\gamma_1),\dots,\tau_{d_n}(\gamma_n)\rangle^{X,G}_{g,n,\beta}:=\left(\int^{eq}_{[\overline{\cM}_{g,n}(X,\beta)]^{\mathrm{vir}}_{G}}\prod^n_{j=1}\psi_j^{d_j}\mathrm{ev}^{\ast}_j(\gamma_j)\right)\in H^{\ast}_{G}(\mathrm{pt},\mathbb C),
\]
where
\begin{enumerate}
    \item $[\overline{\cM}_{g,n}(X,\beta)]^{\mathrm{vir}}_{G}$ is the \emph{equivariant virtual fundamental class}, living in $A^{G}_{D_{\mathrm{vir}}}(\overline{\cM}_{g,n}(X,\beta))$. The virtual dimension is
    \[
    D_{\mathrm{vir}}:=\int_{\beta}c_1(TX)+(m-3)(1-g)+n;
    \]
    \item $\mathrm{ev}_j:\overline{\cM}_{g,n}(X,\beta)\to X$ is the evaluation map at the $j$-th marked point, which is $G$-equivariant;
    \item $\psi_j\in A_1^{G}(\overline{\cM}_{g,n}(X,\beta))$ denotes any equivariant lift of the first Chern class of the universal cotangent line bundles $\cL_j$ on $\overline{\cM}_{g,n}(X,\beta)$.
\end{enumerate}

If $d_1,\dots,d_n$ are all zero, the $G$-equivariant descendant Gromov-Witten invariants above are called \emph{primary} equivariant Gromov-Witten invariants. Let \(\{\phi_{i}\}_{i = 0}^{s}\) be a homogeneous basis of \(H^{*}_{G}(X)\) over \(H^{\ast}_G(\mathrm{pt})\) with $\phi_0=1$, and let $\{\tau^i\}^s_{i=0}$ be the dual coordinate system on $H^{\ast}_G(X)$. A general point \(\tau \in H_{G}^{*}(X)\) can be written as \(\tau = \sum_{i = 0}^{s} \tau^{i} \phi_{i}\) where \(\tau^{i} \in H^{\ast}_G(\mathrm{pt})\). The degree of $\tau^i$ is set to be $\deg\tau^i:=2-\deg\phi_i$.

Equivariant quantum cohomology is defined over the ring \[\mathbb C[\![Q,\tau]\!]=\mathbb C[\![Q]\!][\![\tau^0,\dots,\tau^s]\!].\]
Note that odd variables anti-commute with each other: for $|i|:=\deg\,\phi_i\,(\mod\,2)$, 
\[
\tau^i\tau^j=(-1)^{|i||j|}\tau^j\tau^i,\quad \tau^i\phi_j=(-1)^{|i||j|}\phi_j\tau^i.
\]
 The \emph{big equivariant quantum product} at $\tau\in H^{*}_{G}(X, \mathbb{C})$, denoted by $\star_{\gamma}$, is defined by
\[
\langle \phi_{\alpha}\star_{\tau} \phi_{\beta}, \phi_{\gamma} \rangle^{X,G} = \sum_{d \in \mathrm{NE_{\mathbb{N}}(X)}, n \geq 0} \langle \phi_{i}, \phi_{j}, \phi_{k}, \tau, \cdots, \tau \rangle_{0, n + 3, d}^{X,G} \frac{Q^{d}}{n!}
\]
The quantum product \(\star_{\tau}\) makes \(H_{G}^{*}(X)\llbracket Q, \tau \rrbracket\) a \emph{Frobenius algebra}: it is commutative, associative algebra, unital, and compatible with the \emph{equivariant Poincar\'{e} pairing}
\[
\eta^G(a,b):=\int^{eq}_{X}a\cdot b.
\]
i.e.
\[
\eta^G(a\star_{\tau} b, c)=\eta^G(a,b\star_{\tau} c), \quad a,b,c\in B.
\]

The algebra $(H^{\ast}_G(X)[\![Q,\tau]\!],\star_{\tau})$ is called the \emph{equivariant quantum cohomology}. The product is homogeneous and has $\phi_0$ as the unit.
We assume that there exists an open subset \(B \subset H_{G}^{*}(X)\), such that for every \(\tau \in B\), the sum defining \(\star_{\tau}\) converges. If \(\tau \in H^{2}_{G}(X)\) and we set \(Q=1\), then the  divisor equation simplies the expression to
\begin{equation*}
    \label{eq:quantum product}
\langle \phi_{\alpha} \star_{\tau} \phi_{\beta}, \phi_{\gamma} \rangle = \sum_{d \in \mathrm{NE}(X)_{\mathbb{N}}} \langle \phi_{\alpha}, \phi_{\beta}, \phi_{\gamma} \rangle_{0, 3, d}^{X,G}\exp(\tau \cdot d)
\end{equation*}

\subsection{Main proposal}
\label{subsec:main proposal}
Let $G$ be a reductive algebraic group acting on smooth projective varieties $X$ and $Y$. Recall that a $G$-\emph{contraction} is a projective surjective $G$-equivariant morphism $\pi:X\to Y$ with connected fibers. Building on the recent breakthrough result of \cite{Li2026}, which establishes the existence of stability conditions on any smooth projective varieties, we shall henceforth assume throughout this paper that 
 $D^b_G(X)$ also admits stability conditions. Moreover, all semiorthogonal decompositions we consider are required to be \emph{polarizable}, meaning that each component admits a stability condition. For brevity, we write 
\[
\Stab(X_G)\coloneq \Stab(D^b_G(X)),\quad K^G(X)\coloneq K^G_{\num}(D^b_G(X)).
\] 

\begin{definition}
\label{def:base change}
Let $\mathcal K$ be the field of meromorphic functions on the affine scheme $\mathrm{Spec}(H^{\ast}_G(\mathrm{pt}))$. The $G$\emph{-equivariant cohomology of $X$ with coefficients in} $\mathcal K$ is defined as
\[
H^{\mathcal K}_G(X;\bC)\coloneq H^{\ast}_G(X;\bC)\otimes_{H^{\ast}_G(\mathrm{pt};\bC)}\mathcal K.
\]
\end{definition}
The \emph{twisted chern character map} is then given as follows:
\[
v^G\coloneq (2\pi \i)^{\deg/2}\ch_G:K_{\num}^G(X)\to H^{\mathcal K}_G(X;\mathbb C).
\]
We require that the central charges of stability conditions in $\Stab(X_G)$ factor through $v^G$. 

For completeness, we provide a proof of the equivariant version of the Bondal-Orlov deomposition result \cite{Bondal1995SemiorthogonalDF}.

\begin{proposition}
  Let $G$ be an algebraic group acting on smooth projective varieties $X$ and $X'$, and let $\pi:X\to X'$ be a $G$-equivariant morphism such that $R\pi_{\ast}(\cO_X)=\cO_{X'}$. Then $\pi^{\ast}$ is fully faithful and there is a semiorthogonal decomposition of $D^b_G(X)$:
\[
D^b_G(X)=\langle\mathrm{ker}(R\pi_{\ast}),\pi^{\ast}(D^b_G(X'))\rangle.
\]
\end{proposition}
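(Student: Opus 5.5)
The plan is to follow the classical Bondal--Orlov argument, carried out entirely inside the equivariant categories. The key point is that the adjunctions and the projection formula survive for $G$-equivariant sheaves.

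\emph{Adjoint pair.} Since $\pi$ is $G$-equivariant, both $L\pi^{\ast}\colon D^b_G(X')\to D^b_G(X)$ and $R\pi_{\ast}\colon D^b_G(X)\to D^b_G(X')$ are defined on equivariant objects: pullback and pushforward of a linearisation $\theta$ give a linearisation, using flat base change along $a$ and $p_2$ on $G\times X\to G\times X'$. Boundedness of $L\pi^\ast$ uses smoothness of $X'$. The adjunction $L\pi^{\ast}\dashv R\pi_{\ast}$ holds equivariantly. I would check this by noting that the unit and counit of the non-equivariant adjunction are natural, so they commute with the linearisations. Alternatively, one can view both categories as derived categories of the quotient stacks $[X/G]\to[X'/G]$, where the usual adjunction for a morphism of algebraic stacks applies.

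\emph{Full faithfulness.} For $F\in D^b_G(X')$ the unit is $F\to R\pi_{\ast}L\pi^{\ast}F$. By the projection formula, which is again equivariant because the forgetful functor is conservative and commutes with all functors involved,
\[
R\pi_{\ast}L\pi^{\ast}F\cong F\otimes R\pi_{\ast}\cO_X\cong F\otimes\cO_{X'}\cong F.
\]
Here one must check that the isomorphism $R\pi_{\ast}\cO_X\cong\cO_{X'}$ is compatible with the canonical equivariant structures. This holds because it is the adjunction unit $\cO_{X'}\to R\pi_\ast\pi^\ast\cO_{X'}$, which is a morphism of $G$-sheaves. Since the forgetful functor $D^b_G(X')\to D^b(X')$ detects isomorphisms, the unit is an isomorphism in $D^b_G(X')$. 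Hence $L\pi^{\ast}$ is fully faithful.

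\emph{Semiorthogonal decomposition.} The essential image $\cB=\pi^{\ast}D^b_G(X')$ is right admissible, with right adjoint $\pi^{\ast}R\pi_{\ast}$. The general fact recalled in the preliminaries then gives $D^b_G(X)=\langle\cB^{\perp},\cB\rangle$. It remains to identify
\[
\cB^{\perp}=\{E\mid \Hom(\pi^{\ast}F[n],E)=0\ \forall F,n\}=\{E\mid \Hom(F[n],R\pi_{\ast}E)=0\ \forall F,n\}.
\]
Taking $F=R\pi_{\ast}E$ shows this is exactly $\ker R\pi_{\ast}$. Concretely, every $E$ sits in the triangle $\pi^{\ast}R\pi_{\ast}E\to E\to C$, and applying $R\pi_\ast$ together with full faithfulness shows $C\in\ker R\pi_\ast$.

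\emph{Main obstacle.} The only nontrivial step is the equivariant bookkeeping: verifying that the adjunction and the projection-formula isomorphisms are morphisms of $G$-equivariant objects, i.e.\ that they respect the cocycle data. I would avoid doing this by hand. Instead I would use the identification $D^b_G(X)\simeq D^b([X/G])$ and standard results for the representable, proper morphism of stacks $[X/G]\to[X'/G]$, which is flat-locally $\pi$. With that identification every assertion reduces to the smooth-local, non-equivariant statement, and conservativity of the forgetful functor finishes the argument.
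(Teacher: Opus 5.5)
Your argument is correct and follows the same Bondal--Orlov route as the paper. Full faithfulness comes from the projection formula $R\pi_\ast L\pi^\ast F\cong F\otimes R\pi_\ast\cO_X\cong F$, and the decomposition comes from the counit triangle $\pi^\ast R\pi_\ast E\to E\to C$ with $C\in\ker R\pi_\ast$.

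One point in your favour: you check orthogonality via $\Hom(\pi^\ast F,E)\cong\Hom(F,R\pi_\ast E)$. This gives the vanishing $\Hom(\pi^\ast D^b_G(X'),\ker R\pi_\ast)=0$, which is what the order $\langle\ker R\pi_\ast,\pi^\ast D^b_G(X')\rangle$ actually requires. The paper's written step $\Hom_G(\cE,\pi^\ast\cF)\cong\Hom_G(R\pi_\ast\cE,\cF)$ misstates the adjunction and asserts the opposite vanishing. That vanishing fails in general: for the blow-up of a point on a surface, Serre duality gives $\Hom^1(\cO_E(-1),\cO_X)\neq 0$.
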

\begin{proof}
For \(\cE, \cF \in D^{b}_{G}(X')\), we have
\[
\begin{aligned}
  {\rm Hom}_{G}(\pi^{*}\cE, \pi^{*}\cF) &= {\rm Hom}_{G}(\cE, R\pi_{*}L\pi^{*}\cF) \\
  &= {\rm Hom}_{G}(\cE, \cF)
\end{aligned}
\]
where the second equality uses the projection formula and the hypothesis \(R\pi_{*} \cO_{X} = \cO_{X'}\). Therefore, $\pi^{\ast}$ is fully faithful. For any $\cE\in\ker(\pi_{\ast})$ and $\cF\in D^b_G(X')$, we have 
\[
\Hom_G(\cE,\pi^{\ast}\cF)\cong\Hom_G(R\pi_{\ast}\cE,\cF)=0.
\]
Hence, $\ker(\pi_{R\ast})$ is left-orthogonal to $\pi^{\ast}(D^b_G(X'))$. For any $\cE\in D^b_G(X)$, the counit of the adjunction $(\pi^{\ast},R\pi_{\ast})$ yields the triangle in $D^b_G(X)$:
\[
\pi^{\ast}R\pi_{\ast}\cE\xrightarrow{\eta} B\to\cB\to C(\eta)\to \pi^{\ast}R\pi_{\ast}\cE[1].
\]
Applying $R\pi_{\ast}$ and using $R\pi_{\ast}\pi^{\ast}\cong\mathrm{id}$, we get $R\pi_{\ast}C(\eta)\cong0$, i.e. $C(\eta)\in\ker(\pi_{\ast})$. Therefore, this gives the claimed semiorthogonal decomposition.
\end{proof}

We now state the following conjecture:
\begin{sloppypar}
\begin{conjecture}
    \label{conjecture:1}
Let $\pi:X\to Y$ be a $G$-contraction of a smooth projective variety $X$. 
\begin{itemize}
    \item[(A)] One can associate to $\pi$ a canonical family of quasi-convergent paths $\{\sigma_t^{\pi,\psi}\}$ in $\Stab(X_G)/\mathbb C$. Generic values of the parameter $\psi$ yield a $G$-linear semiorthogonal decomposition of $D^b_G(X)$, and different generic values give mutation-equivalent decompositions.
    \item[(B)] For a generic $\psi$, the semiorthogonal factors of $D^b_G(X)$ are closed under tensor product with complexes of the form $\pi^{\ast}(E)$ for $E\in\mathrm{Perf}_G(Y)$.
    \item[(C)] Given a further $G$-contraction $Y\to Y'$, the semiorthogonal decomposition of $D^b_G(X)$ associated with $X\to Y'$ refines the decomposition associated with $X\to Y$.
    \item[(D)] If $\pi:X\to X'$ is a $G$-equivariant morphism of smooth varieties with $R\pi_{\ast}(\cO_X)=\cO_{X'}$, then for suitable parameters, the semiorthogonal decomposition of $D^b_G(X)$ asscoiated to $X\to Y$ refines the decomposition obtained by combining the semiorthogonal decomposition of 
    \[\pi^{\ast}(D^b_G(X'))\cong D^b_G(X')\]
    associated to $X'\to Y$ with the decomposition 
    \[
    D^b_G(X)=\langle\mathrm{ker}(\pi_{\ast}),\pi^{\ast}(D^b_G(X'))\rangle.
    \]
\end{itemize}
\end{conjecture}
\end{sloppypar}

A recent work \cite{ESS2025} provides strong support for this conjecture in the two-dimensional case.

We now formulate a precise proposal for the canonical quasi-convergent paths in $\Stab(X_G)$ predicted by \Cref{conjecture:1}.
Let \(\zeta = \zeta(t) \in H_{G}^{*}(X; \mathbb{C})\). Consider the equivariant quantum differential equation

\begin{equation}
    \label{eq: qde for Fano}
 t\frac{d \zeta}{d t} + c_{1}^G(X) \star_{\ln(t)c^G_{1}(X)} \zeta=0.
\end{equation}

Here, $c_1^G(X)$ denotes the $G$-equivariant first Chern class. In general, the sum in \eqref{eq: qde for Fano} is only formal in $t$. To handle this, we recall the truncation technique introduced in \cite{halpernleistner2024noncommutativeminimalmodelprogram}. Denote by $\mathrm{NE}(X/Y)_{\mathbb N}$ the set of numerical equivalence classes of effective integral 1-cycles spanned by curves that are contracted by $\pi$. For a 1-cycle $d\in\mathrm{NE}(X/Y)_{\mathbb N}$ with $c_1(X)\cdot d>0$, define the operator $T_d^G$ by
\[
\eta^G(T^G_d(\alpha),\beta)=\langle\alpha,\beta\rangle_{0,2,d}^{X,G}.
\]
 Let $\psi=\omega+iB\in H^2(X;\mathbb C)$ with $\omega$ a relatively ample class. The \emph{$G$-truncated quantum endomorphism} is defined by 
\begin{equation}
\label{eq:truncated endomorphism}
E^G_{\psi}(t):=c_1^G(X)\cup(-)+\sum_{\substack{d\in\mathrm{NE}(X/Y)_{\mathbb N}\\ (c_1(X)-\omega)\cdot d>0}}(c_1(X)\cdot d)t^{c_1(X)\cdot d}e^{-\psi\cdot d}T^G_d.
\end{equation}
According to \cite[Lemma 6]{halpernleistner2024noncommutativeminimalmodelprogram}, the sum in \eqref{eq:truncated endomorphism} is finite.  The corresponding \emph{$G$-truncated quantum differential equation} is 
\begin{equation}
    \label{eq: qde}
t\frac{d\zeta(t)}{dt}+\frac{1}{z}E_{\psi}^G(t)\zeta(t)=0, \quad \zeta(t)\in H^{\ast}_G(X;\mathbb C),
\end{equation}
where $z\in\mathbb C$ is a parameter. 
Recall the grading operator $\mu$ on $H^{\ast}_G(X)$: for $\phi_i\in H^i_G(X)$ and $\tau\in H^{k}(BG)$,
\[
\mu(\tau\phi_i)\coloneq\frac{\deg(\tau\phi_i)-m}{2}=\frac{i+k-m}{2}\tau\phi_i.
\] 
For $\bT=(\mathbb C^{\ast})^n$, this becomes 
\[
\mu(t_i^k\phi_d)=\left(\frac{\deg\phi_i-m}{2}+|k|\right)t_i^k\phi_i.
\]
\begin{lemma}
  $tE^G_{\psi}(1)t^{\mu}=t^{\mu}E_{\psi}^G(t)$.
\end{lemma}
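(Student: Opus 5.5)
We prove the identity $tE^G_{\psi}(1)t^{\mu}=t^{\mu}E_{\psi}^G(t)$ term by term, using the decomposition \eqref{eq:truncated endomorphism}. Since $t^{\mu}$ acts on a homogeneous class $\alpha$ of total (cohomological plus equivariant) degree $\deg\alpha$ by $t^{(\deg\alpha-m)/2}$, it suffices to understand how each summand of $E^G_\psi$ shifts degrees. So the plan is to show that each summand is homogeneous of a definite degree, and then conjugate by $t^\mu$.

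\textbf{Step 1: the classical term.} Multiplication by $c_1^G(X)\in H^2_G(X)$ raises degree by $2$. Hence for homogeneous $\alpha$,
\[
t^{\mu}\bigl(c_1^G(X)\cup\alpha\bigr)=t^{(\deg\alpha+2-m)/2}c_1^G(X)\cup\alpha=t\cdot c_1^G(X)\cup t^{\mu}\alpha .
\]
In other words, $t^{\mu}\circ(c_1^G\cup)=t\,(c_1^G\cup)\circ t^{\mu}$. This matches the classical term of $E^G_\psi(t)$, which carries no power of $t$.

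\textbf{Step 2: the quantum terms.} Next we show that $T^G_d$ is homogeneous of degree $2-2c_1(X)\cdot d$. The virtual dimension of $\overline{\cM}_{0,2}(X,d)$ is $c_1\cdot d+m-1$, so $\langle\alpha,\beta\rangle^{X,G}_{0,2,d}\in H^{*}_G(\mathrm{pt})$ has degree $\deg\alpha+\deg\beta-2(c_1\cdot d+m-1)$; here the equivariant integral lowers degree by twice the virtual dimension. Equivariant Poincar\'e pairing lowers degree by $2m$. Nondegeneracy of $\eta^G$ over $H^*_G(\mathrm{pt})$ (the basis $\phi_i$ is free, with an equivariant dual basis) then gives
\[
\deg T^G_d(\alpha)=\deg\alpha+2-2c_1(X)\cdot d .
\]
Consequently
\[
t^{\mu}T^G_d=t^{1-c_1\cdot d}\,T^G_d\,t^{\mu}.
\]
Multiplying by the scalar $(c_1\cdot d)e^{-\psi\cdot d}$ and by $t^{c_1\cdot d}$, each summand of $t^{\mu}E^G_\psi(t)$ becomes $t\cdot(c_1\cdot d)e^{-\psi\cdot d}T^G_d\,t^{\mu}$. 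This is exactly the corresponding summand of $tE^G_\psi(1)t^\mu$.

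\textbf{Step 3: summation.} The index set of $d$ is the same finite set on both sides, by \cite[Lemma 6]{halpernleistner2024noncommutativeminimalmodelprogram}. Summing the terms from Steps 1 and 2 therefore gives the identity.

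The main obstacle is the degree bookkeeping in Step 2 in the equivariant setting. We must check that the grading $\mu$, which counts equivariant parameters with degree $2$ each, is the grading for which $T^G_d$ is homogeneous. That is, we must confirm that equivariant Gromov--Witten invariants are homogeneous in $H^*_G(\mathrm{pt})$ of the expected degree, and that the equivariant dual basis respects this grading. We take the first as standard (it follows from the degree of the equivariant virtual class), and the second follows from the freeness statement of \cite{GKM97}. Odd classes cause no sign issues here, since $t^\mu$ is even.
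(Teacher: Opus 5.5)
Your proposal is correct and follows essentially the same route as the paper. Both arguments conjugate each summand of $E^G_\psi(t)$ by $t^{\mu}$, using that $c_1^G(X)\cup(-)$ raises degree by $2$ and that $T^G_d$ is homogeneous of degree $2-2c_1(X)\cdot d$ (read off from the virtual dimension $c_1(X)\cdot d+m-1$ and the degree drop $2m$ of $\eta^G$), and then sum; your remark that nondegeneracy of $\eta^G$ is what forces $T^G_d\alpha$ to be homogeneous is a detail the paper leaves implicit.
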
 
\begin{proof}
According to \cite[Lemma 6]{halpernleistner2024noncommutativeminimalmodelprogram}, the operator $T^G_d$ is homogeneous of degree $\Delta_d\coloneq2(1-c_1(X)\cdot d)$. Indeed, for homogeneous $\alpha,\beta\in H^{\ast}_G(X)$,
\[
\deg(\alpha)+\deg(\beta)-2(m-1+c_1(X)\cdot d)=\deg\langle\alpha,\beta\rangle_{0,2,d}^{X,G},
\]
and
\[
\deg(T^G_d\alpha)+\deg(\beta)-2m=\deg(\eta^G(T^G_d(\alpha),\beta)).
\]
Therefore, 
\[
\deg(T_d^G\alpha)-\deg(\alpha)=2-2c_1(X)\cdot d.
\]
Consequently,
\[
t^{\mu}(T^G_d\alpha)=t^{\frac{(\deg(\alpha)+\Delta_d)-m}{2}}\cdot T_d^G\alpha=t^{1-c_1(X)\cdot d}\cdot T_d^G(t^{\mu}\alpha).
 \]
In addition,
 \[
 t^{\mu}(c^G_1(X)\cup\alpha)=t^{\frac{(\deg\alpha+2)-m}{2}}(c_1^G(X)\cup\alpha)=t\cdot(c_1^G(X)\cup((t^{\mu}\alpha))).
 \]
Thus, 
\[
\begin{aligned}
t^{\mu}E_{\psi}^G(t)(\alpha)&=t\cdot(c_1^G(X)\cup((t^{\mu}\alpha)))+\sum_{\substack{d\in\mathrm{NE}(X/Y)_{\mathbb N}\\ (c_1(X)-\omega)\cdot d>0}}(c_1(X)\cdot d)te^{-\psi\cdot d}T^G_d(t^{\mu}\alpha)\\
&=tE^G_{\psi}(1)t^{\mu}.
\end{aligned}
\]
\end{proof} 

Under the change of variables $\tilde{\zeta}(t)=t^{\mu}\zeta(t)$, we obtain the equivalent system:
\begin{equation*}
\label{eq:truncated qde2}
\frac{d\tilde{\zeta}(t)}{dt}=A(t)\tilde{\zeta}(t),
\end{equation*}
where
\[
A(t)\coloneq\frac{1}{t}\mu^G-\frac{1}{z}E_{\psi}^G(1).
\]
Note that $A(t)$ is meromorphic with poles at $0$ and $\infty$. By the Hukuhara-Turrittin theorem in \cite{wasow2018asymptotic}, for $|t|>t_0$ in a sector $S\subset\mathbb C$ containing $\mathbb R_{>0}$, there exists a fundamental solution of the form
\[
\Phi^G_t=Y^G(t)e^{tD^G+B^G(t)},
\]
where 
\begin{enumerate}
\item $Y^G(t)$ is invertible and holomorphic, with a uniform asymptotic expansion 
\[
Y^G(t)\sim Y_0^G+Y_1^Gt^{-1/p}+Y_2^Gt^{-2/p}+\cdots,
\] 
for some $p\in\mathbb Z_{>0}$, and the columns of $Y_0^G$ form a basis of generalized eigenvectors of $\frac{-1}{z}E_{\psi}^G(1)$;

\item $D^G$ is the diagonal matrix of eigenvalues of $\frac{-1}{z}E^G_{\psi}(1)$ corresponding to $Y_0^G$;

\item $B^G(t)=\sum_{k=1}^{p-1}D^G_kt^{k/p}+C^G\ln t$, with constant diagonal matrices $D^G_k$ and $C^G$ commuting with $D^G$ and $\|B^G(t)\|=O(t^{(p-1)/p})$.
\end{enumerate}
Moreover, 
\begin{itemize}
\item if $E^G_{\psi}(1)$ is semisimple, then $D^G_{k}=0$ for $k=1,\dots,p-1$;
\item if the eigenvalues of $E^G_{\psi}(1)$ are distinct, then in an orthonormal eigenbasis one may take $B(t)=0$.
\end{itemize}

Let $\Phi^G_t\in\End(H^{\ast}_{G}(X;\mathbb C))$ be the fundamental solution of \eqref{eq: qde}. It extends canonically to
\[
\Phi^G_{t}\in\End_{\mathcal K}(H^{\mathcal K}_{G}(X;\mathbb C))
\]
by setting
\[
\Phi^G_{t}(\alpha\otimes f)\coloneq \Phi^{G}_t(\alpha)\otimes f
\]
for $\alpha\in H^{\ast}_{G}(X;\mathbb C)$ and $f\in\mathcal K$. Consider the \emph{evaluation map}:
\begin{equation*}
\begin{aligned}
\ev_{\s}:H^{\ast}_{\bT}(\mathrm{pt})&\to\mathbb C\\
z_i^n&\mapsto {\i \pi n s_i},
\end{aligned}
\end{equation*}
which naturally extends to $H^{\ast}_\bT(X)$ via its module structure over $H^{\ast}_{\bT}(\mathrm{pt})$. Let $\bT=(\mathbb C^{\ast})^m$ be a maximal torus of $G$ with Weyl group $W$, and let $\s=(s_1,\dots,s_m)\in\mathbb C^m$. Via the restriction map in equivariant cohomology, we have an isomorphism
 \[
 H_G^{\ast}(X)\cong H^{\ast}_{\bT}(X)^W.
 \]
Consequently, we may restrict $\ev_\s$ to $H^{\ast}_G(X)$. For a meromorphic function $f\in\mathcal K$ that is homolomorphic at $({\i\pi s_1},\dots,{\i\pi s_m})$, set
\[
\ev_\s(\alpha\otimes f)\coloneq \ev_\s(\alpha)\cdot f(\s).
\]
where $f(\s)$ denotes evaluation of $f$ at the point $({\i\pi s_1},\dots,{\i\pi s_m})$. We say that $\alpha\otimes f\in H^{\mathcal K}_G(X;\mathbb C)$ is holomorphic at $\s$ if $f$ is holomorphic at $({\i\pi s_1},\dots,{\i\pi s_m})$.

\begin{remark}
The evaluation map $\ev_{\s}$ is chosen to be compatible with the one used to define $\bT$-stability conditions in \Cref{sec:T-stab}. For a finite group $G$, the maximal torus is trivial. Hence, the evaluation map becomes trivial.
\end{remark}

The main proposal can now be stated as follows:
\begin{proposal}
\label{proposal:main}
Let $\pi:X\to Y$ be a $G$-contraction of a smooth projective variety $X$ of dimension $m$. Let $\Phi^G_t\in\End(H^{\ast}_G(X;\mathbb C))$ be a fundamental solution of the  $G$-truncated quantum differential equation \eqref{eq: qde}. Then, for any generic tuple $\s=(s_1,\dots,s_m)\in\mathbb C^m$, there exists a quasi-convergent path $\sigma_t^G=(Z_t^G,\cP_t^G)$ for $t\in[t_0,\infty)$ in $\Stab(X_G)/\mathbb C$ whose central charges are given by
\[
Z_{t}^G(\cE)=\ev_\s\int^{eq}_X\Phi^G_t(v^G(\cE)),
\]
for $\cE\in K^G(X)$.
\end{proposal}
Here, the genericity of $\s$ ensures that the evaluation $\ev_\s$ yields finite complex numbers. For instance, for $\bP^{m-1}$ endowed with the diagonal action of the torus $\bT=(\mathbb C^{\ast})^m$ (see \Cref{sec:G-NMMP for projective spaces}), the tuple $\s$ can be taken from  the set $\Omega_{\bT}$, which is defined as the complement in \(\bC^{m}\) of the hyperplanes given by
\[
z_{i} - z_{j} = k, \quad i,j = 1, \cdots, m, i \neq j, k \in \bZ.
\]

\begin{remark}
If $X$ is a Fano variety, the sum in \eqref{eq: qde for Fano} is finite. In this case, one can formulate the proposal of \Cref{conjecture:1} directly using the equivariant quantum differential equation \eqref{eq: qde for Fano}. If $G$ is finite, one can also consider stability conditions supported on the lattice provided by the Chen-Ruan cohomology ring, together with quantum differential equations arising from orbifold Gromov-Witten theory \cite{ChenRuancohomology, chen2002orbifold}. For a connected reductive algebraic group $G$, quasi-convergent paths required in \Cref{proposal:main} are acturally contained in the subspace $\bT\Stab_{\s}(X)$ of $\Stab(X_G)$ defined in \Cref{sec:T-stab}, where $\bT$ denotes a maximal torus of $G$.
\end{remark}

\begin{remark}
Decategorification \Cref{proposal:main} suggests a direct sum decomposition of the $G$-equivariant topological K-theory:
\[
K^{\mathrm{top}}_G(X)_{\mathbb Q}\cong H_{1,\psi}^G\oplus\cdots\oplus H^G_{n,\psi},
\]
which should arise from the asymptotic behavior of solutions to \eqref{eq: qde}. Specifically, the subspaces $H^G_{i,\psi}$ should correspond to the generalized eigenspaces of the operator $\frac{-1}{z}E_{\psi}^G(1)$ acting on $H^{\ast}_{G}(X)$, with the decomposition being upper-triangular with respect to the $G$-equivariant Euler pairing. 
\end{remark}

\begin{remark}
The whole $G$-equivariant framework developed here is a natural generalization of the noncommutative minimal model program. When $G=1$ (the trivial group), it reduces exactly to the classical non-equivariant setting described in \cite{halpernleistner2024noncommutativeminimalmodelprogram}.
\end{remark}

In \cite[Remark 13]{halpernleistner2024noncommutativeminimalmodelprogram}, it is suggested that the quasi-convergent paths of \Cref{proposal:main} should start from a \emph{geometric} stability condition, i.e. one under which all skyscraper sheaves of points are stable and share the same phase. We now extend this idea to the equivariant setting. Let $x\in X$ and denote by $G_x\subseteq G$ the stabilizer of $x$. The skyscraper sheaf $\cO_x$, equipped with the trivial $G_x$-action, defines an object $\cO_x^{G_x}\in D^b_{G_x}(X)$. Assuming the index $[G:G_x]$ is finite, the $G$\emph{-point sheaf} associated to $x$ is given by
\[
P_x\coloneq\mathrm{Ind}^G_{G_x}(\cO_x^{G_x})\in D^b_G(X).
\]

\begin{definition}
\label{def:geometric stab for stack}
Let $X$ be a smooth projective variety with an action of an  algebraic group $G$. A stability condition $\sigma\in\Stab(X_G)$ is called \emph{geometric} if, for every $x\in X$ with finite $[G:G_x]$, the corresponding $G$-point sheaf $P_x$ is $\sigma$-stable and all such $P_x$ have the same phase.
\end{definition}

If $G$ is finite, $[G,G_x]$ is automatically finite for any $x$. For the diagonal action of $\bT=(\mathbb C^{\ast})^m$ on $\mathbb P^{m-1}$, the $\bT$-point sheaf is defined only for the $m$ torus-fixed point.

\section{Inducing quasi-convergent paths}
\label{sec:induce paths for finite groups}
\begin{sloppypar}
In this section, we treat the $G$-NMMP for finite groups. Using techniques for inducing stability conditions from \cite{MacriSukhenduPaolo07,2023arXiv231002917Q,dell2024fusionequivariantstabilityconditionsmorita}, we construct quasi-convergent paths in $\Stab(X_G)/\mathbb C$ that fulfill \Cref{proposal:main} by lifting known non-equivariant solutions. We then apply this method to projective spaces and certain blow-up surfaces with finite group actions, relying the explicit non-equivariant paths obtained in \cite{zuliani2024semiorthogonaldecompositionsprojectivespaces,karube2024noncommutativemmpblowupsurfaces}.
\end{sloppypar}

\subsection{Inducing theorem}
\begin{definition}[{\cite{MacriSukhenduPaolo07,dell2024fusionequivariantstabilityconditionsmorita}}]
Let $\cD$ be a triangulated category with an action of a finite group $G$. A stability condition $(Z,\cP)\in\Stab(\cD)$ is called $G$-\emph{invariant} if:
\begin{itemize}
    \item $\phi_g\cP(\phi)=\cP(\phi)$ for all $g\in G$ and for all $\phi\in\mathbb R$;
    \item $Z\in\Hom_{\mathbb Z}(K(\cD),\mathbb C)^G\subset\Hom_{\mathbb Z}(K(\cD),\mathbb C)$,
\end{itemize}
where $\mathrm{Hom}_{\mathbb Z}(K(\cD),\mathbb Z)^G$ denotes the $\mathbb C$-linear subspace of $G$-invariant homomorphisms $Z$. We denote by $\Stab(\cD)^G$ the set of all $G$-invariant stability conditions.
\end{definition}

Based on a recent result of \cite{Li2026}, we show the non-existence of $G$-invariant stability conditions on any smooth projective variety:
\begin{proposition}
  \label{prop:g-stab}
  Let \(X\) be a smooth projective variety with an action of a finite group $G$. Then $\Stab(X)^G$ is non-empty.
\end{proposition}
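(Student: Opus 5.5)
Since $\Stab(X)$ is non-empty by \cite{Li2026}, the plan is to show that $G$-invariant stability conditions can be built from a \emph{$G$-invariant input} to Li's construction, rather than by averaging an arbitrary stability condition. Averaging does not work directly: $\Stab(X)$ is not convex, and the $G$-orbit of a given $\sigma$ need not contain a fixed point. The useful structure is that $G$ acts on $\Stab(X)$ by $\sigma\mapsto \phi_g\sigma$. This action commutes with the $\widetilde{\mathrm{GL}}_2^+(\mathbb R)$-action and is compatible with the local homeomorphism $\Stab(X)\to\Hom(K_{\num}(X),\mathbb C)$ of \Cref{deformation}.

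First step: fix a $G$-invariant ample class. Take any ample $H$ and set $H_G=\sum_{g\in G} g^{\ast}H$. This is ample and $G$-invariant, and we may also choose a $G$-invariant $B$-field, for instance $B=0$. The stability conditions of \cite{Li2026} are built from such data (a polarization together with auxiliary choices such as tilts and quadratic forms). The central charges involved, for example those of the form $-\int_X e^{-(B+\i\omega)}\ch(-)$ suitably modified, are then $G$-invariant functions on $K_{\num}(X)$, because pullback by $g$ preserves $\ch$, $H_G$ and $B$.

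Second step: verify that the hearts are $G$-stable. In the tilting constructions, each heart is defined purely in terms of $\mathrm{Coh}(X)$, slope or tilt-slope functions determined by $(H_G,B)$, and torsion pairs cut out by these functions. Since $g^{\ast}$ is an autoequivalence preserving $\mathrm{Coh}(X)$, $\ch$ and $H_G$, it preserves each torsion pair and hence each heart. A stability condition is determined by its heart together with its central charge, so $\phi_g\sigma=\sigma$, which gives $\phi_g\cP(\phi)=\cP(\phi)$. The support property is inherited from Li's theorem. This yields $\sigma\in\Stab(X)^G$.

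The main obstacle is ensuring that every auxiliary choice in \cite{Li2026} can be made $G$-equivariantly. Some steps may use non-canonical data: a choice of Bogomolov-type quadratic form, a perturbation off walls, or a deformation argument via \Cref{deformation}. My first approach is to replace each such choice by its $G$-average; the averaging is legitimate because the relevant conditions (positivity of quadratic forms, openness of the region where the support property holds) are convex or open in the invariant parameters. Where a deformation of central charges is needed, I would deform within the $G$-invariant subspace $\Hom(K,\mathbb C)^G$. The lift remains $G$-fixed by uniqueness of lifts under the local homeomorphism: $\phi_g$ maps a lifted path to a lift of the same path with the same starting point, so it fixes the endpoint.

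As a fallback, suppose some step only produces $\sigma$ whose $G$-orbit is finite and lies in a small ball. Then I would instead work in the fixed locus directly. By \cite{MacriSukhenduPaolo07}, $\Stab(X)^G$ is a closed submanifold that maps locally homeomorphically onto $\Hom(K,\mathbb C)^G$. One then needs to reach it by deforming a nearly-invariant $\sigma$ toward the averaged central charge $\frac1{|G|}\sum_g Z\circ g^{\ast}$, using uniqueness of deformations to conclude invariance.
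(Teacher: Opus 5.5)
Your proposal has a genuine gap: it never actually produces a $G$-fixed stability condition, because every step depends on an unverified picture of the construction in \cite{Li2026}.

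Step two assumes the hearts are tilts of $\mathrm{Coh}(X)$ cut out by slope or tilt-slope functions attached to $(H_G,B)$. No such tilting construction is known on an arbitrary smooth projective variety; it would need Bogomolov--Gieseker-type inequalities that are not available in general. The construction the paper invokes is of a different nature. Your remedy for the ``main obstacle'' is to average each auxiliary choice because the relevant conditions are ``convex or open''. That claim is made without identifying what the choices are, so it does not constitute an argument.

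Two of your ingredients are correct. Lifting a path in $\Hom(K,\mathbb C)^G$ from a $G$-fixed point stays $G$-fixed. And a $\sigma$ whose orbit lies in a sufficiently small ball can be deformed to the averaged central charge, after which it is $G$-fixed by uniqueness. But both need as input a $\sigma$ that is already fixed or nearly fixed, and nothing in the proposal supplies one.

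The missing idea is to use your invariant polarization for an equivariant embedding rather than for tilting. The argument has three steps.

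\begin{enumerate}
\item \emph{Equivariant embedding.} The paper takes a very ample $L$ and sets $\widetilde L=\bigotimes_{g\in G}g^{\ast}L$, which is $G$-invariant and very ample. Its complete linear system gives a $G$-equivariant closed embedding $\iota\colon X\hookrightarrow\mathbb P^n$, with $G$ acting on $\mathbb P^n$ through $\PGL(n+1)$.
\item \emph{Invariance on $\mathbb P^n$.} Take the stability condition $\sigma$ on $\mathbb P^n$ from \cite[Theorem 6.3]{Li2026}. The connected group $\PGL(n+1)$ acts trivially on $K_{\num}(\mathbb P^n)$, so the $\PGL(n+1)$-orbit of $\sigma$ lies in a single fibre of the local homeomorphism of \Cref{deformation}, and that fibre is discrete. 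By connectedness and continuity of the action, $\sigma$ is $\PGL(n+1)$-invariant, hence $G$-invariant.
\item \emph{Pullback.} Li's pullback $\iota^{\#}\sigma=(Z\circ\iota_{\ast},\{E\mid\iota_{\ast}E\in\cP(\phi)\})$ from \cite[Theorem 6.5]{Li2026} is $G$-invariant, because $g^{\ast}\iota_{\ast}\cong\iota_{\ast}g^{\ast}$ for an equivariant embedding.
\end{enumerate}

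All non-canonical data are thus replaced by one equivariant choice, the embedding, and no averaging is needed.
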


\begin{proof}
We first consider the case $X=\mathbb P^n$. By \cite[Theorem 6.3]{Li2026}, there exists a stability condition $\sigma=(Z,\cP)$ on $\bP^n$ whose central charge $Z$ is invariant under the action of $G\subseteq\Aut(\bP^n)=\PGL(n+1)$. 
Since $\PGL(n+1)$ preserves the numerical Grothendieck group of $\bP^n$, $Z$ is in fact invariant under the entire $\PGL(n+1)$. Moreover, because  $\PGL(n+1)$ is connected and its action on $\Stab(\bP^n)$ is continuous, $\sigma$ is $\PGL(n+1)$-invariant, and hence $G$-invariant. Thus, $\sigma\in\Stab(\bP^n)^G$. Now let $X$ be any smooth projective variety with a $G$-action. Choose a very ample line bundle $L$ on X and set \[\widetilde{L}\coloneq \otimes_{g\in G}g^{\ast}L.\]
Then $\widetilde{L}$ is a $G$-invariant very ample line bundle, and we may identify $\bP(H^0(X,\widetilde{L}))\cong\bP^n$ for some $n$. This gives a $G$-equivariant embedding $\iota:X\hookrightarrow\bP^n$. By the first part of the proof, there exists a $G$-invariant stability condition $\sigma\in\Stab(\bP^n)^G$. Applying the pullback construction from \cite[Theorem 6.5]{Li2026}, we obtain a stability condition 
\[
\iota^{\#}\sigma=(Z\circ\iota_{\ast},\iota^{\#}\cP)\in\Stab(X),
\]
where
\[
\iota^{\#}\cP(\phi)\coloneq\{E\in D^b(X)\mid \iota_{\ast}E\in\cP(\phi)\}.
\]
Since $\sigma$ is $G$-invariant and the embedding $\iota$ is $G$-equivariant, the induced stability condition $\iota^{\#}\sigma$ is also $G$-invariant. Hence, $\iota^{\#}\sigma\in\Stab(X)^G$, which completes the proof. 
\end{proof}

\begin{definition}
A quasi-convergent path $\sigma_{t}$ is called $G$-invariant if $\sigma_t\in\Stab(\cD)^G$ for any $t$.
\end{definition}

Given a quasi-convergent path $\sigma^G_t$ in $\Stab(X_G)$ that satisfies \Cref{proposal:main} via a fundamental solution $\Phi^G_t$ of \eqref{eq: qde}, we define the \emph{spanning condition} as follows:

\begin{definition}
\label{def:spanning condition for finite groups}
Let $r\in\mathbb R$ be the real part of an eigenvalue of $\frac{-1}{z}E^G_{\psi}(1)$. The \emph{asymptotic growth subspace} is:
\[
F^r_G\coloneq\left\{\alpha\in H^{\ast}_G(X;\mathbb C)\colon\mathop{\lim\sup}\limits_{t\to\infty}\frac{\ln\|\Phi^G_t(\alpha)\|}{t}\le r\right\}.
\]
We say the pair $(\sigma^G_t,\Phi^G_t)$ satisfies the \emph{spanning condition}  if $F^r_G$ is spanned by the classes of limit semistable objects $\cE\in D^b_G(X)$ (with respect to $\sigma^G_t$) for which 
\[
\mathop{\lim\inf}\limits_{t\to\infty}\frac{|Z^G_t(\cE)|}{\|\Phi^G_t(\cE)\|}>0.
\]
\end{definition}

The main result of this section is the following lifting theorem:
\begin{theorem}
\label{thm: inducing solutions}
Let $X$ be a smooth projective variety with an action of a finite group $G$. Let $\sigma_{t}=(Z_t,\cP_t)$ for $t\in[t_0,\infty)$ be a $G$-invariant quasi-convergent path in $\Stab(X)^G$ that solves the non-equivariant \Cref{proposal:main} via a fundamental solution $\Phi_t$ of \eqref{eq: qde}. Then it induces a quasi-convergent path $\sigma^G_t$ in $\Stab(X_G)$ that solves \Cref{proposal:main} via a fundamental solution $\Phi^G_t$ of \eqref{eq: qde} and preserves the spanning condition. Moreover, $\sigma_{t_0}^G$ is geometric if $\sigma_{t_0}$ is geometric.
\end{theorem}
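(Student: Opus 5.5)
The plan is to use the Macr\`i--Mehrotra--Stellari induction construction: for a $G$-invariant stability condition $\sigma=(Z,\cP)\in\Stab(X)^G$, define $\sigma^G=(Z^G,\cP^G)$ on $D^b_G(X)$ by
\[
Z^G(\cE)\coloneq Z(\mathrm{Forg}(\cE)),\qquad \cP^G(\phi)\coloneq\{\cE\in D^b_G(X)\mid \mathrm{Forg}(\cE)\in\cP(\phi)\},
\]
where $\mathrm{Forg}=\mathrm{Res}^G_{1}$. By \cite{MacriSukhenduPaolo07,dell2024fusionequivariantstabilityconditionsmorita}, this is a stability condition, and $\sigma\mapsto\sigma^G$ is continuous on $\Stab(X)^G$. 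Applying it pointwise to $\sigma_t$ gives a continuous path $\sigma^G_t$.

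The first step is to compare the two filtrations. The HN filtration of $\mathrm{Forg}(\cE)$ is $G$-invariant, so it descends to the HN filtration of $\cE$. Hence $\phi^\pm_{\sigma^G_t}(\cE)=\phi^\pm_{\sigma_t}(\mathrm{Forg}\,\cE)$, the masses agree, and the average phases agree. In particular, $\cE$ is limit semistable if and only if $\mathrm{Forg}\,\cE$ is.

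For quasi-convergence I will take the limit semistable filtration of $\mathrm{Forg}\,\cE$. Since the associated semiorthogonal decomposition is built from $G$-invariant data, its components are $G$-stable subcategories, and I will argue that the limit filtration is canonical. Canonicity follows because it is obtained by coarsening HN filtrations via the $\sim^{\inf}$ classes, and the projection functors of the induced semiorthogonal decomposition commute with the $\phi_g$. So the filtration lifts to $D^b_G(X)$, and the phase gaps and limits of $l_t$ are inherited verbatim. Canonicity needs care: I will use uniqueness of the limit HN filtration from \cite[Theorem 2.30]{halpernleistner2024stabilityconditionssemiorthogonaldecompositions}.

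Next come the central charges. For finite $G$ we have $H^*_G(X;\bC)=H^*(X;\bC)^G$, and $\ev_\s$ is trivial. Moreover $\ch_G$ composed with forgetting agrees with $\ch\circ\mathrm{Forg}$ after this identification, and $\int^{eq}_X=\int_X$. Since $G$ acts by automorphisms preserving $c_1$, the Gromov--Witten invariants and the cone $\mathrm{NE}(X/Y)$ (the contraction is $G$-equivariant), the operators $T_d$ and $E_\psi(1)$ commute with $G$, provided $\psi$ is chosen $G$-invariant. Then $E^G_\psi$ is the restriction of $E_\psi$ to invariants. The given $\Phi_t$ is not a priori $G$-equivariant, but $Z_t$ is $G$-invariant, so $Z_t(v(F))=\int_X\Phi_t(\bar v)$ with $\bar v$ the $G$-average. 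I will set $\Phi^G_t\coloneq \mathcal{R}\circ\Phi_t|_{H^*(X)^G}$, where $\mathcal{R}$ is the Reynolds operator.

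This is the step I expect to be the main obstacle: checking that the averaged $\Phi_t$ is still a fundamental solution on the invariant part. Averaging $g^{-1}\Phi_t g$ over $G$ gives a solution commuting with $G$, since each $g^{-1}\Phi_t g$ solves the equation. Invertibility of the average may fail, so I will instead use the Hukuhara--Turrittin normal form: the equation restricted to $H^*(X)^G$ has its own fundamental solution. I will then argue that the relevant functional $\alpha\mapsto\int_X\Phi_t\alpha$ on invariant $\alpha$ is given by some solution on the invariant subspace, because $\int_X\circ\Phi_t$ restricted to invariants is a solution of the adjoint equation on $(H^*)^G$. The spanning condition transfers because the growth subspaces satisfy $F^r_G=F^r\cap H^*(X)^G$. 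The spanning limit semistable objects $E$ can be replaced by $\mathrm{Ind}^G_1E$, whose class is the average, and the inequality $\liminf|Z|/\|\Phi\|>0$ survives up to a factor $|G|$.

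Finally, for geometricity, $\mathrm{Forg}(P_x)=\bigoplus_{[g]\in G/G_x}\cO_{g x}$ is $\sigma_{t_0}$-semistable of the common phase. Stability of $P_x$ in $\cP^G(\phi)$ follows because any $G$-subobject of $P_x$ in the finite-length category $\cP(\phi)$ is a sum of orbit skyscrapers, hence all or nothing. All $P_x$ have the same phase.
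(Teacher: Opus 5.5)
The overall route matches the paper's: $\sigma^G_t=\mathrm{Res}^{-1}\sigma_t$; lifting the limit semistable filtration of $\mathrm{Res}\,\cE$; $E^G_\psi=E_\psi|_{H^*(X)^G}$; $\mathrm{Ind}$ for the spanning condition; and the orbit-of-skyscrapers argument for geometricity. For the filtration, the paper lifts the $G$-invariant truncations $(\cP_\bullet(>f),\cP_\bullet(\le f))$ via Polishchuk, and that functoriality is what your uniqueness argument actually needs. Coarsening to $\sim^{\inf}$-classes alone only gives the SOD filtration. Your caveat that $\psi$ (and $\omega$) must be $G$-invariant is correct and is tacit in the paper, since a single $T_d$ preserves invariants only when $d$ is $G$-fixed.

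The gap is the step you flagged. The paper simply sets $\Phi^G_t:=\Phi_t|_{H^*_G(X)}$, which silently assumes $\Phi_t$ commutes with $G$. You rightly see this is not given, but your workaround does not produce a fundamental solution.

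Write $H^*(X)=H^G\oplus H'$, with $H'$ the nontrivial isotypic part; it is preserved by $E_\psi$. Then $\Phi_t=\mathrm{diag}(\Psi_t,\Psi'_t)\,M$ with $M$ constant and invertible, and $\mathcal R\circ\Phi_t|_{H^G}=\Psi_tM_{11}$, which may be singular.

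The adjoint-equation remark does not resolve this. Representing $\alpha\mapsto\int_X\Phi_t\alpha$ on $H^G$ by \emph{some} solution is automatic: it is $\Psi_tM_{11}$. If $\gamma\mapsto\eta(1,\Psi_t\gamma)$ separates points of $H^G$ (e.g. when $1$ is cyclic for $E_\psi(1)|_{H^G}$), then $M_{11}$ is the only candidate. So invertibility has to be proved, not circumvented.

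The missing input is $G$-invariance of $Z_t$ on \emph{non-invariant} classes. Since $1\in H^G$ and $\eta(H^G,H')=0$, one has $\int_X\Phi_t\beta=\eta\bigl(1,\Psi_t(M_{11}\beta_1+M_{12}\beta_2)\bigr)$. Invariance, provided the classes $v(E)$ span as they do in the examples, then forces $M_{12}(H')\subset K:=\{\gamma:\eta(1,\Psi_t\gamma)\equiv 0\}$. Consequences:

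\begin{itemize}
\item $M_{11}$ is surjective modulo $K$. It can therefore be altered by a map into $K$ to an invertible matrix without changing $Z^G_t$.
\item When $K=0$, you get $M_{12}=0$, so $M$ is block lower-triangular and $\Phi^G_t=\mathcal R\circ\Phi_t|_{H^G}=\Psi_tM_{11}$ is a fundamental solution.
\end{itemize}

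Relatedly, your identity $F^r_G=F^r\cap H^G$ is not automatic unless $\Phi_t$ preserves $H^G$, i.e.\ $M_{21}=0$. What does hold, when $M_{12}=0$, is $F^r_G=\mathcal R(F^r)$. This still makes your $\mathrm{Ind}$ argument work, because $\Phi^G_t(v^G(\mathrm{Ind}\,E))=|G|\,\mathcal R\,\Phi_t(v(E))$.
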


\begin{theorem}[{\cite{2023arXiv231002917Q,dell2024fusionequivariantstabilityconditionsmorita}}]
\label{thm:induce stab for finite groups}
Let $\cD$ be a triangulated category with an action of a finite group $G$.
Then the subset $\Stab(\cD)^G$ of $G$-invariant stability conditions on $\cD$ is a closed, complex submanifold of $\Stab(\cD)$. Moreover, there is a closed embedding 
\begin{equation*}
\begin{aligned}
\mathrm{Res}^{-1}:\Stab(\cD)^G&\hookrightarrow\Stab(\cD_G),\\
 \sigma=(Z,\cP)&\mapsto (Z\circ\mathrm{Res},\mathrm{Res}^{-1}\cP)\\
\end{aligned}
\end{equation*}
where $\mathrm{Res}\coloneq\mathrm{Res}^1_G$ and $\mathrm{Res}^{-1}\cP(\phi):=\{\cE\in\cD_G\mid\mathrm{Res}(\cE)\in\cP(\phi)\}$.
\end{theorem}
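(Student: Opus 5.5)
The plan is to prove the two assertions separately: first that $\Stab(\cD)^G$ is a closed complex submanifold, then that $\mathrm{Res}^{-1}$ is a well-defined, continuous, injective and closed map. Throughout, the only two inputs are the local homeomorphism of \Cref{deformation} and the uniqueness of Harder--Narasimhan filtrations.

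\emph{Submanifold.} The group $G$ acts on $\Stab(\cD)$ through the autoequivalences $\phi_g$, and each $\phi_g$ acts continuously. Hence $\Stab(\cD)^G$, being the fixed locus of a finite group of homeomorphisms of a Hausdorff space, is closed. For the manifold structure, fix $\sigma\in\Stab(\cD)^G$ and a neighbourhood $U\ni\sigma$ on which the central charge map $\mathcal Z$ of \Cref{deformation} is a homeomorphism onto an open set. Shrink $U$ so that it is $G$-stable; this is possible because $G$ is finite and fixes $\sigma$. The map $\mathcal Z$ is $G$-equivariant, so for $\sigma'\in U$ with $\mathcal Z(\sigma')\in\Hom(K(\cD),\mathbb C)^G$, both $\phi_g\sigma'$ and $\sigma'$ lie in $U$ and have the same central charge. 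Local injectivity then forces $\phi_g\sigma'=\sigma'$. Therefore $\mathcal Z$ restricts to a homeomorphism from $U\cap\Stab(\cD)^G$ onto an open subset of the linear subspace $\Hom(K(\cD),\mathbb C)^G$. This gives complex submanifold charts.

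\emph{$\mathrm{Res}^{-1}\sigma$ is a stability condition.} Axioms (a) and (b) are immediate. Axiom (c) holds because $\Hom_{\cD_G}(\cE,\cF)=\Hom_{\cD}(\mathrm{Res}\,\cE,\mathrm{Res}\,\cF)^G$. For axiom (d), take $\cE\in\cD_G$ and consider the HN filtration of $\mathrm{Res}\,\cE$. Since $\phi_g\sigma=\sigma$, applying $\phi_g$ produces another HN filtration of $\phi_g\mathrm{Res}\,\cE\cong\mathrm{Res}\,\cE$. The HN truncations are functorial: they are the truncation functors of the t-structures $\cP(>\phi)$. So the linearisation $\theta_g$ induces compatible isomorphisms on every stage $E_i$ and every factor $A_i$, and the cocycle condition follows from functoriality. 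This produces an HN filtration in $\cD_G$ whose factors lie in $\mathrm{Res}^{-1}\cP(\phi_i)$, and it also gives $\phi^\pm(\cE)=\phi^\pm(\mathrm{Res}\,\cE)$. For local finiteness, $\mathrm{Res}$ is faithful and exact on the quasi-abelian categories $\cP_G((t-\eta,t+\eta))\to\cP((t-\eta,t+\eta))$ and sends strict monomorphisms to strict monomorphisms. Hence a strictly increasing chain of strict subobjects of $\cE$ restricts to a strictly increasing chain for $\mathrm{Res}\,\cE$, so finite length transfers.

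\emph{Injectivity, continuity and closedness.} Injectivity comes from recovering $\cP$ from $\mathrm{Res}^{-1}\cP$. If $F\in\cP(\phi)$, then $\mathrm{Res}\,\mathrm{Ind}\,F=\bigoplus_g\phi_gF\in\cP(\phi)$ by invariance, so $\mathrm{Ind}\,F\in\mathrm{Res}^{-1}\cP(\phi)$. Thus $\cP(\phi)$ is exactly the closure under direct summands of $\mathrm{Res}(\mathrm{Res}^{-1}\cP(\phi))$. The central charge $Z$ is likewise recovered, since $Z\circ\mathrm{Res}\circ\mathrm{Ind}=|G|\,Z$ on $G$-invariant classes. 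For the topology, the equality $\phi^\pm_{\mathrm{Res}^{-1}\sigma}(\cE)=\phi^\pm_\sigma(\mathrm{Res}\,\cE)$ shows that Bridgeland's slicing distance does not increase, $d(\mathrm{Res}^{-1}\sigma,\mathrm{Res}^{-1}\tau)\le d(\sigma,\tau)$. Conversely, phases of $F$ are bounded by those of $\mathrm{Ind}\,F$, because $F$ is a summand of $\mathrm{Res}\,\mathrm{Ind}\,F$. Together these give a homeomorphism onto the image. For closedness of the image, take a limit $\tau$ of points $\mathrm{Res}^{-1}\sigma_n$. The same two phase inequalities make $(\sigma_n)$ Cauchy, and its limit $\sigma$ is $G$-invariant by closedness of $\Stab(\cD)^G$. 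Continuity then gives $\tau=\mathrm{Res}^{-1}\sigma$.

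The step I expect to be the main obstacle is the support-property version, which the paper uses in place of local finiteness. There one must compare a norm on $K(\cD_G)$ with the norm of $\mathrm{Res}$ of a class, and $\mathrm{Res}$ need not be injective on $K_{\num}$. I would handle this by choosing the norm on $K(\cD_G)_{\mathbb R}$ as the pullback of the norm under the injective map $\cE\mapsto(\mathrm{Res}(\cE\otimes\rho))_{\rho\in\mathrm{Irr}(G)}$. Semistability is preserved by $-\otimes\rho$, which gives a bound $\|\cE\|\le C\,|Z(\mathrm{Res}\,\cE)|\cdot\max_\rho\dim\rho$. Failing that, I would fall back on local finiteness, which is what \Cref{deformation} actually requires.
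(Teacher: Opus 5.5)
The paper gives no proof of this statement; it quotes it from Qiu--Zhang and Dell--Heng--Licata. Their route identifies the image of $\mathrm{Res}^{-1}$ with the $\Rep(G)$-equivariant stability conditions on $\cD_G$ (see the remark after the statement), so closedness of the image becomes a statement about a fixed locus. Your argument is instead the direct Macr\`i--Mehrotra--Stellari-style one. Several parts of it are fine:
\begin{itemize}
\item the chart argument for $\Stab(\cD)^G$;
\item the descent of HN filtrations through the functorial truncations of the $G$-invariant t-structures (the same input the paper uses via Polishchuk in \Cref{Inducing_for_finite_G});
\item the transfer of local finiteness;
\item injectivity via $F$ being a summand of $\mathrm{Res}\,\mathrm{Ind}\,F$.
\end{itemize}

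The step that does not go through as written is closedness of the image. From $\mathrm{Res}^{-1}\sigma_n\to\tau$ you pass to ``$(\sigma_n)$ is Cauchy, and its limit $\sigma$''. This presupposes that Cauchy sequences in $\Stab(\cD)$ converge. That does not follow from the local homeomorphism of \Cref{deformation}, since a local homeomorphism onto an open set in a complete space says nothing about completeness of the source. It is also not among the inputs you allow yourself. You must either invoke completeness of Bridgeland's metric as a separate result, or construct $\sigma$ directly, for example as follows.
\begin{itemize}
\item Write $W$ for the central charge of $\tau$ and $Z_n$ for that of $\sigma_n$. Each $\sigma_n$-semistable $F$ gives a $\mathrm{Res}^{-1}\sigma_n$-semistable object $\mathrm{Ind}\,F$ with $Z_n(F)=|G|^{-1}Z_n(\mathrm{Res}\,\mathrm{Ind}\,F)$. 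Hence $\|\,|G|^{-1}W\circ\mathrm{Ind}-Z_n\|_{\sigma_n}\le\|W-Z_n\circ\mathrm{Res}\|_{\mathrm{Res}^{-1}\sigma_n}\to 0$.
\item For $n\gg0$, Bridgeland's deformation theorem then yields $\sigma$ near $\sigma_n$ with central charge $|G|^{-1}W\circ\mathrm{Ind}$. You need the deformation radius to be uniform in $n$. This holds because $\tau$ is locally finite, so the $\mathrm{Res}^{-1}\sigma_n$ are uniformly locally finite for large $n$, and the faithful exact functor $\mathrm{Ind}$ transfers this to the $\sigma_n$.
\item Your chart argument makes $\sigma$ $G$-invariant. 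The identity $W(\cE\otimes\mathbb C[G])=|G|\,W(\cE)$ holds for each $Z_n\circ\mathrm{Res}$ and passes to the limit, so $Z_\sigma\circ\mathrm{Res}=W$.
\item Finally $d(\mathrm{Res}^{-1}\sigma,\tau)<1$, so $\mathrm{Res}^{-1}\sigma=\tau$ by Bridgeland's uniqueness lemma (same central charge, slicing distance $<1$).
\end{itemize}

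Relatedly, for the homeomorphism onto the image you need an equality, not the bound you state. By $G$-invariance all $\phi_gF$ have the same HN phases and masses. Hence $\phi^\pm_\sigma(F)=\phi^\pm_{\mathrm{Res}^{-1}\sigma}(\mathrm{Ind}\,F)$ and $m_{\mathrm{Res}^{-1}\sigma}(\mathrm{Ind}\,F)=|G|\,m_\sigma(F)$. A one-sided inequality $\phi^+_\sigma(F)\le\phi^+(\mathrm{Ind}\,F)$ would not control $|\phi^+_\sigma(F)-\phi^+_\tau(F)|$. With the equality, $\mathrm{Res}^{-1}$ is an isometry for Bridgeland's metric on $\Stab(\cD)^G$. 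That is also what justifies your Cauchy claim.

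Finally, drop the proposed support-property norm. With the paper's tensor product, $\mathrm{Res}(\cE\otimes\rho)\cong\mathrm{Res}(\cE)^{\oplus\dim\rho}$. So $\cE\mapsto(\mathrm{Res}(\cE\otimes\rho))_\rho$ only records $\mathrm{Res}[\cE]$ and is not injective. For example, for $\cD=D^b(\mathrm{pt})$ with trivial action it sees only dimensions on $K(\cD_G)=R(G)$. The pullback is therefore merely a seminorm. If you want a support property, take it with respect to $v\circ\mathrm{Res}$, where it is inherited verbatim from $\sigma$. For the statement as posed, for an arbitrary $\cD$, your local-finiteness argument is the relevant one.
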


\begin{remark}
In \cite{dell2024fusionequivariantstabilityconditionsmorita}, the image of  $\mathrm{Res}^{-1}$ is characterised precisely: it consists of those stability conditions on $\cD_G$ that are $\Rep(G)$\emph{-equivariant}, when  $\Rep(G)$ is regarded as a fusion category. By \Cref{prop:g-stab}, $\Stab(X_G)$ is nonempty for any smooth projective variety $X$ with an action of a finite group $G$.
\end{remark}

\begin{lemma}
\label{Inducing_for_finite_G}
Let $\cD$ be a triangulated category with an action of a finite group $G$. Any $G$-invariant quasi-convergent path $\sigma_{\bullet}\in\mathrm{Stab}(D^b(X))$ induces a quasi-convergent path $\sigma^G_{\bullet}\in\Stab(\cD)$.
\end{lemma}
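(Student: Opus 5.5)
The plan is to apply \Cref{thm:induce stab for finite groups} pointwise along the path. For each $t$, set $\sigma^G_t:=\mathrm{Res}^{-1}(\sigma_t)=(Z_t\circ\mathrm{Res},\mathrm{Res}^{-1}\cP_t)$. Since $\mathrm{Res}^{-1}$ is a closed embedding, and in particular continuous, $t\mapsto\sigma^G_t$ is a continuous path in $\Stab(\cD_G)$. The key observation is that the slicing is pulled back. For $\cE\in\cD_G$, the $\sigma_t$-Harder--Narasimhan filtration of $\mathrm{Res}(\cE)$ is $G$-invariant by uniqueness of HN filtrations, so it lifts to a filtration in $\cD_G$. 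This lift is exactly the $\sigma^G_t$-HN filtration of $\cE$. Consequently
\[
\phi^{\pm}_{\sigma^G_t}(\cE)=\phi^{\pm}_{\sigma_t}(\mathrm{Res}\,\cE),\qquad m_t(\cE)=m_t(\mathrm{Res}\,\cE),\qquad \phi_t(\cE)=\phi_t(\mathrm{Res}\,\cE),
\]
where the masses and average phases agree because $Z^G_t=Z_t\circ\mathrm{Res}$ on each factor. In particular, $\cE$ is limit semistable for $\sigma^G_\bullet$ if and only if $\mathrm{Res}\,\cE$ is limit semistable for $\sigma_\bullet$.

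With this dictionary, condition (2) of quasi-convergence transfers immediately. For limit semistable $\cE,\cF$ we have $l_t(\cE|\cF)=l_t(\mathrm{Res}\,\cE|\mathrm{Res}\,\cF)$, so the required limit exists.

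The main obstacle is condition (1): producing a limit semistable filtration of an arbitrary $\cE\in\cD_G$ inside $\cD_G$. The limit semistable filtration of $\mathrm{Res}\,\cE$ need not be unique, so it need not be $G$-invariant. To handle this, I would first pass to the canonical coarsened filtration given by the semiorthogonal decomposition $\cD=\langle\cD^{E}\mid E\in\cP_\bullet/\sim^{\inf}\rangle$ of \Cref{SODsfromQuasipath}.

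\emph{Step 1: the decomposition is $G$-stable.} Each $\phi_g$ preserves every $\cP_t$ and satisfies $Z_t\circ\phi_g=Z_t$, so $\phi_g$ preserves phases, masses and limit semistability. Hence $\phi_g$ preserves each relation $\sim^{\inf}$ and $<^{\inf}$. Because the equivalence classes are totally ordered, $\phi_g$ fixes every component $\cD^{E}$.

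\emph{Step 2: the projections lift.} The filtration of $\mathrm{Res}\,\cE$ coming from a semiorthogonal decomposition is functorial and unique up to unique isomorphism. Since each component is $G$-stable, this filtration carries induced linearisations and so lifts to $\cD_G$. Its graded pieces lie in $\mathrm{Res}^{-1}(\cD^{E})$.

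\emph{Step 3: refine within a component.} Inside a single $\sim^{\inf}$-class, I would refine further by the finer germ-valued HN data: group the factors according to their limiting phase functions $f\in C^0_\infty$, using the canonical filtration through $\cP_\bullet(>f)$ and $\cP_\bullet(\le f)$. These subcategories are again $G$-stable, so the canonical truncation triangles lift to $\cD_G$. The resulting graded pieces are limit semistable with strictly separated average phases, which gives the required $\liminf>0$. If canonicity of these truncations needs justification, I would invoke the limit HN theory of \cite{halpernleistner2024stabilityconditionssemiorthogonaldecompositions}, which states that the pieces in $\cP_\bullet(f)$ are determined uniquely.

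An alternative fallback for Step 3 is to use $\mathrm{Ind}$: $\cE$ is a direct summand of $\mathrm{Ind}\,\mathrm{Res}\,\cE$, since $|G|$ is invertible in $\k$. Applying $\mathrm{Ind}$ to a filtration of $\mathrm{Res}\,\cE$ gives a filtration of $\mathrm{Ind}\,\mathrm{Res}\,\cE$ whose cones satisfy $\mathrm{Res}\,\mathrm{Ind}\,G_i=\bigoplus_g\phi_g G_i$. These cones remain limit semistable with the same average phases, by $G$-invariance of $\sigma_t$. One would then need to cut out the summand $\cE$, which again requires the canonical (functorial) version of the filtration. For this reason I expect the canonicity argument of Steps 1–3 to be the heart of the proof.
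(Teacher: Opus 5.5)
Your proposal is correct and is essentially the paper's argument. The heart of both is your Step 3: the truncations with respect to the $G$-stable pair $(\cP_\bullet(>f),\cP_\bullet(\le f))$, which is a t-structure by Lemma 2.7 of Halpern-Leistner--Jiang--Robotis, lift to $\cD_G$, and since $\mathrm{Res}$ is t-exact their cones restrict to the limit semistable factors of $\mathrm{Res}(\cE)$; condition (2) is then transferred exactly as you do. The paper applies this truncation directly on all of $\cD$, justifying the lift via $\mathrm{Res}\circ\mathrm{Ind}(\cP(\phi))\subset\cP(\phi)$ and Polishchuk's result on induced t-structures, so your preliminary coarsening through the $G$-stable semiorthogonal decomposition in Steps 1--2 and the $\mathrm{Ind}$ fallback are harmless but unnecessary.
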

\begin{proof}
By \Cref{thm:induce stab for finite groups}, the map $\mathrm{Res}^{-1}$ sends $\sigma_{\bullet}$ to a path  is the required quasi-convergent path $\mathrm{Res}^{-1}(\sigma_{\bullet})$ in $\Stab(\cD_G)$. We claim that this induced path is quasi-convergent.

For any $\cE\in\cD_G$, consider the limit semistable filtration of $\mathrm{Res}(\cE)\in\cD$ with respect to $\sigma_{\bullet}$:
\begin{equation*}
\begin{tikzcd}
0=E_0 \arrow[rr] &                        & E_1 \arrow[ld] \arrow[r] & \dots \arrow[r] & E_{m-1} \arrow[rr] &                        & E_m=\mathrm{Res}(\cE) \arrow[ld] \\
                 & G_1 \arrow[lu, dashed] &                          &                 &                    & G_m \arrow[lu, dashed] &                 
\end{tikzcd}
\end{equation*} 
By \cite[Lemma 2.7]{halpernleistner2024stabilityconditionssemiorthogonaldecompositions}, the pair $(\cP_{t}(>\phi^+_t(G_i)),\cP_{t}(\le\phi^+_t(G_i)))$ is a t-structure on $\cD$. Hence, $E_{i-1}\in\cP_{t}(>\phi^+_t(G_i))$. Following \cite[Theorem B]{dell2024fusionequivariantstabilityconditionsmorita}, we have
\[
(\mathrm{Res}\circ\mathrm{Ind}(\cP(\phi)))\subset\cP(\phi),
\]
for any $\phi\in\mathbb R$. Consequently, by \cite[Theorem 2.1.2]{Polishchuk06}, the pair \[(\mathrm{Res}^{-1}\cP_{\bullet}(>\phi^+_t(G_i)),\mathrm{Res}^{-1}\cP_{\bullet}(\le\phi^+_t(G_i)))\] is a $t$-structure on $\cD_G$. Therefore there exists a triangle 
\[
\cE_{i-1}\to\cE_i\to\cG_i
\]
in $\cD_G$ with $\cE_{i-1}\in\mathrm{Res}^{-1}\cP_{\bullet}(>\phi_t^+(G_i))$ and $\cG_i\in\mathrm{Res}^{-1}\cP_{\bullet}(\le\phi^+_t(G_i))$. Since $\mathrm{Res}$ is exact, we obtain a triangle 
\[
\mathrm{Res}(\cE_{i-1})\to\mathrm{Res}(\cE_i)\to\mathrm{Res}(\cG_i)
\]
in $\cD_G$ with 
\[
\mathrm{Res}(\cE_{i-1})\cong E_{i-1}, \quad \mathrm{Res}(\cG_{i})\cong G_i.
\]
 In particular, $\cG_i\in\mathrm{Res}^{-1}\cP_{\bullet}(\phi^+_t(G_i))$, which implies $\cG_i$ is limit semistable with respect to $\mathrm{Res}^{-1}(\sigma_{\bullet})$. Hence, we obtain a limit semistable filtration of $\cE$ in $\cD_G$:
\begin{equation*}
\begin{tikzcd}
0=\cE_0 \arrow[rr] &                        &\cE_1 \arrow[ld] \arrow[r] & \dots \arrow[r] &\cE_{m-1} \arrow[rr] &                        &\cE \arrow[ld] \\
                 &\cG_1 \arrow[lu, dashed] &                          &                 &                    & \cG_m \arrow[lu, dashed] &                 
\end{tikzcd}
\end{equation*} 
Given two limit semistable objects $\cE,\cF\in\cD_G$ with respect to $\mathrm{Res}^{-1}(\sigma_{\bullet})$, set $E=\mathrm{Res}(\cE)$, $F=\mathcal{F}$. Then, 
\begin{equation*}
\begin{aligned}
\lim\limits_{t\to\infty}\frac{l_t(\cE|\cF)}{1+|l_t(\cE|\cF)|}&=\lim\limits_{t\to\infty}\frac{\ln(Z_t\circ\mathrm{Res}(\cE))-\ln(Z_t\circ\mathrm{Res}(\cE))+i\pi(\phi_t(\cE)-\phi_t(\cF))}{1+|\ln(Z_t\circ\mathrm{Res}(\cE))-\ln(Z_t\circ\mathrm{Res}(\cE))+i\pi(\phi_t(\cE)-\phi_t(\cF))|}   \\
&=\lim\limits_{t\to\infty}\frac{\ln(Z_t(E))-\ln(Z_t(E))+i\pi(\phi_t(E)-\phi_t(F))}{1+|\ln(Z_t(E))-\ln(Z_t(E))+i\pi(\phi_t(E)-\phi_t(F))|}\\
&=\lim\limits_{t\to\infty}\frac{l_t(E|F)}{1+|l_t(E|F)|}\\
\end{aligned}    
\end{equation*}
Since $E=\mathrm{Res}(\cE)$ and $F=\mathrm{Res}(\cF)$ are limit semistable with respect to $\sigma_{\bullet}$, the last limit exists. Hence, $\mathrm{Res}^{-1}(\sigma_{\bullet})$ satisfies the second condition of quasi-convergence as well, completing $\Stab(\cD_G)$.
\end{proof}

\begin{lemma}
\label{lem: geometric preserving for finite G}
Let $X$ be a smooth projective variety with an action of a finite group $G$, and let $\sigma=(Z,\cP)\in\Stab(X)^G$ be a $G$-invariant stability condition.
\begin{enumerate}
\item If $\cO_x$ is $\sigma$-stable of phase $\phi_0$, then the $G$-point sheaf $P_x$ is $\mathrm{Res}^{-1}(\sigma)$-stable with the same phase $\phi_0$;
\item If $P_x$ is $\mathrm{Res}^{-1}(\sigma)$-stable of phase $\phi_0$ and every subobject $A$ of $\cO_x$ in $\cP(\phi_0)$ satisfies $g^{\ast}A\cong A$ for all $g\in G_x$, then $\cO_x$ is $\sigma$-stable. 
\end{enumerate}
In particular, $\mathrm{Res}^{-1}$ preserves geometricity.
\end{lemma}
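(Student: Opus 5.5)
For (1) I would compute $\mathrm{Res}(P_x)$ explicitly and then use that $G$ acts transitively on the orbit. By the formula for $\mathrm{Ind}^G_{G_x}$ recalled in \Cref{sec:pre},
\[
\mathrm{Res}(P_x)=\mathrm{Res}\,\mathrm{Ind}^G_{G_x}(\cO_x^{G_x})\cong\bigoplus_{[g]\in G/G_x}g^{\ast}\cO_x\cong\bigoplus_{[g]\in G/G_x}\cO_{g^{-1}x}.
\]
Since $\sigma$ is $G$-invariant, each $g^{\ast}\cO_x$ is $\sigma$-stable of phase $\phi_0$. Hence $\mathrm{Res}(P_x)\in\cP(\phi_0)$, and $P_x\in\mathrm{Res}^{-1}\cP(\phi_0)$ is semistable of phase $\phi_0$ by \Cref{thm:induce stab for finite groups}.

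For stability, let $\cA\hookrightarrow P_x$ be a nonzero subobject in the abelian category $\mathrm{Res}^{-1}\cP(\phi_0)$. Because $\mathrm{Res}$ is exact and faithful, $\mathrm{Res}(\cA)\hookrightarrow\bigoplus\cO_{g^{-1}x}$ is a subobject in $\cP(\phi_0)$. The summands are stable of the same phase and pairwise non-isomorphic, since they are supported at distinct points. So the direct sum is semisimple with multiplicity-free isotypic components, and $\mathrm{Res}(\cA)$ must be the sum of a subset $S$ of the summands. The $G$-linearisation of $\cA$ forces $S$ to be $G$-stable. As $G$ acts transitively on $G/G_x$, $S$ is everything, so $\mathrm{Res}(\cA)\to\mathrm{Res}(P_x)$ is an isomorphism. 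Since $\mathrm{Res}$ reflects isomorphisms (a cone with zero restriction is zero), $\cA=P_x$, and $P_x$ is stable. The last claim then follows: if all $\cO_x$ are stable of a common phase, then all $P_x$ are stable of that phase.

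For (2), I would argue by contradiction. Since $\cO_x$ is a direct summand of $\mathrm{Res}(P_x)\in\cP(\phi_0)$, it lies in $\cP(\phi_0)$. Assume it is not stable, and pick a simple subobject $A\subset\cO_x$ in $\cP(\phi_0)$. Let $S\subset\cO_x$ be the $A$-isotypic part of the socle of $\cO_x$.
\begin{itemize}
\item If $S=\cO_x$, then $\cO_x\cong A^{\oplus k}$. Since $\End(\cO_x)=\mathbb C$, this forces $k=1$, so $\cO_x$ is stable, a contradiction.
\item Hence $S$ is a proper subobject. By hypothesis $g^{\ast}A\cong A$ for $g\in G_x$. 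Using the trivial $G_x$-linearisation of $\cO_x$, each $g\in G_x$ maps the subobject $A$ to a subobject isomorphic to $A$, so $S$ is $G_x$-invariant.
\end{itemize}
Now I use the adjunction between $\mathrm{Ind}=\mathrm{Ind}^G_1$ and $\mathrm{Res}$ (both adjoints for finite $G$). The inclusion $A\hookrightarrow\cO_x\hookrightarrow\mathrm{Res}(P_x)$ corresponds to a morphism $\mathrm{Ind}(A)\to P_x$. By \cite[Theorem B]{dell2024fusionequivariantstabilityconditionsmorita}, $\mathrm{Res}\,\mathrm{Ind}(A)\in\cP(\phi_0)$, so this morphism lives in the abelian category $\mathrm{Res}^{-1}\cP(\phi_0)$. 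Let $\cI$ be its nonzero image there. By stability of $P_x$, $\cI=P_x$. On the other hand, $\mathrm{Res}(\cI)$ is the image of $\bigoplus_{g\in G}g^{\ast}A$. Its component at $\cO_x$ is the sum of the images of those $g^{\ast}A$ with $gx=x$, which is contained in $S\subsetneq\cO_x$. This contradicts $\mathrm{Res}(\cI)=\mathrm{Res}(P_x)$.

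I expect the main obstacle to be this last identification. One must check that the images of $g^{\ast}A$ for $g\in G_x$ land in the isotypic socle $S$, which uses $g^{\ast}A\cong A$ together with simplicity of $A$. One must also check that the components of $\mathrm{Res}(\cI)$ at the other points $\cO_{g^{-1}x}$ do not contribute to the $\cO_x$-summand. I would handle both by tracking the explicit unit and counit of the $\mathrm{Ind}\dashv\mathrm{Res}$ adjunction, and by using that Hom between stable objects supported at different points vanishes.
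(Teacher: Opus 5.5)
Your proof is correct. Part (1) follows the paper's argument, and you make explicit a step the paper leaves implicit. That step is that a subobject of $\bigoplus_{y\in Gx}\cO_y$ in $\cP(\phi_0)$ is a sum of summands, because these are pairwise non-isomorphic stable objects of one phase.

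Part (2) takes a different route from the paper. The paper takes a destabilising subobject $A\subset\cO_x$ and uses the isomorphisms $g^\ast A\cong A$ to give $A$ the trivial $G_x$-linearisation, so that $A\hookrightarrow\cO_x^{G_x}$ becomes equivariant. It then induces with $\mathrm{Ind}^G_{G_x}$ to get a nonzero proper subobject of $P_x$. The hypothesis does not actually supply that equivariant structure. Abstract isomorphisms $g^\ast A\cong A$ need not satisfy the cocycle condition. Even if $A$ is linearisable, the subobject $A\subset\cO_x$ need not be $G_x$-stable.

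Your argument never equivariantises $A$. You take the adjoint morphism $\mathrm{Ind}^G_1(A)\to P_x$; its image must be all of $P_x$ by stability. You then read off the $\cO_x$-component of the restricted image from the block structure of the counit: it is a sum of simple subobjects isomorphic to the $g^\ast A$ with $g\in G_x$. This uses only finite length of $\cP(\phi_0)$, exactness of $\mathrm{Res}$ on slices, and $\End(\cO_x)=\mathbb C$. The cost is some bookkeeping with the counit, which you correctly flag. Block-diagonality comes from $\Hom(\cO_p,\cO_q)=0$ for $p\neq q$; you do not need any support statement about $A$ itself.

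Your route also gives slightly more than stated. It works verbatim with the full socle of $\cO_x$ in place of the $A$-isotypic part. Each image of $g^\ast A$ is simple, so it lies in $\mathrm{soc}(\cO_x)$. If $\mathrm{soc}(\cO_x)=\cO_x$, then $\cO_x$ is semisimple, and $\End(\cO_x)=\mathbb C$ forces it to be simple. So your argument shows the hypothesis $g^\ast A\cong A$ in (2) is not needed at all.
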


\begin{proof}
Assume $\cO_x$ is $\sigma$-stable of the phase $\phi_0$. Since $\sigma$ is $G$-invariant, each $g^{\ast}$ is also stable of phase $\phi_0$ for all $g\in G$. Hence,
\[
\mathrm{Res}(P_x)=\bigoplus_{g\in G/G_x}g^{\ast}\cO_x\in\cP(\phi_0),
\]
which implies $P_x\in\mathrm{Res}^{-1}\cP(\phi_0)$. Suppose $P_x$ is not stable. Then there is a short exact sequence in $\mathrm{Res}^{-1}\cP(\phi_0)$:
\[
0\to\cE\to P_x\to\cF\to0,
\]
where $\cE$ is neither $0$ nor $P_x$. Applying $\mathrm{Res}$ gives a short exact sequence 
\[
0\to\mathrm{Res}(\cE)\to\bigoplus_{g\in G/G_x}g^{\ast}\cO_{x}\to\mathrm{Res}(\cF)\to0
\]
in $\mathrm{Res}^{-1}\cP(\phi_0)$. Since $\mathrm{Res}(\cE)$ is a $G$-invariant subobject of $\oplus_{g\in G/G_x}g^{\ast}\cO_x$, which is a direct sum of distinct skyscraper sheaves, $\mathrm{Res}(\cE)$ must be either $0$ or $\oplus_{g\in G/G_x}g^{\ast}\cO_x$, which is a contradiction. Thus, $P_x$ is stable.

Conversely, suppose $P_x$ is $\mathrm{Res}^{-1}(\sigma)$-stable of phase $\phi_0$. Then $\mathrm{Res}(P_x)=\oplus_{g\in G/G_x}g^{\ast}\cO_x\in\cP(\phi_0)$, and since $\cP(\phi_0)$ is closed under direct summands, $\cO_x\in\cP(\phi_0)$.  If $\cO_x$ is not stable, there exists a short exact sequence in $\cP(\phi_0)$:
\[
0\to A\to \cO_x\to B\to 0
\]
with $A\ne0$ and $A\ne\cO_x$. By assumption, $g^{\ast}A\cong A$ for all $g\in G_x$. Hence, we may endow $A$ with the trivial $G_x$-action, making $A^{G_x}$ a $G_x$-equivariant subobject of $\cO_x^{G_x}$ in $D^b_{G_x}(X)$. Applying $\mathrm{Ind}^G_{G_x}(A^{G_x})$ yields a nonzero proper subobject 
\[
\mathrm{Ind}^G_{G_x}(A^{G_x})\hookrightarrow\mathrm{Ind}^G_{G_x}(\cO^{G_x}_x)=P_x
\]
in $\mathrm{Res}^{-1}\cP(\phi_0)$. This contradicts the stability of $P_x$. Hence, $\cO_x$ must be stable. 

The final statement follows directly from the two parts.
\end{proof}

\begin{lemma}
\label{lem:finite G-chomology}
Let $X$ be a smooth projective variety with an action of a finite group $G$. Then,
\begin{enumerate}
\item $H^{\ast}_G(X;\mathbb C)= H^{\ast}(X;\mathbb C)^G$;
\item The restriction of the Poincar\'{e} pairing $\eta$ to $H^{\ast}(X;\mathbb C)^G$ is nondegenerate.
\end{enumerate}
\end{lemma}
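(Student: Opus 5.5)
For part (1) we use the Borel construction and transfer. Write $X_G:=X\times_G EG$. Since $G$ is finite, $G$ acts freely on $X\times EG$, so the projection $q\colon X\times EG\to X_G$ is a finite Galois covering with group $G$. The space $X\times EG$ is homotopy equivalent to $X$, and this equivalence is $G$-equivariant. For a finite covering with complex (characteristic zero) coefficients, the transfer $\tau\colon H^\ast(X\times EG;\mathbb C)\to H^\ast(X_G;\mathbb C)$ has two properties:
\[
q^\ast\circ\tau=\sum_{g\in G}g^\ast,\qquad \tau\circ q^\ast=|G|\cdot\mathrm{id}.
\]
Since $|G|$ is invertible in $\mathbb C$, the map $q^\ast$ is injective. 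Its image is contained in the invariants $H^\ast(X\times EG;\mathbb C)^G$. Conversely, any invariant class $\alpha$ satisfies $\alpha=q^\ast\bigl(\tfrac{1}{|G|}\tau(\alpha)\bigr)$. Hence
\[
H^\ast_G(X;\mathbb C)\cong H^\ast(X;\mathbb C)^G.
\]
Alternatively, we may invoke the Cartan--Leray (Serre) spectral sequence $H^p(G;H^q(X;\mathbb C))\Rightarrow H^{p+q}_G(X;\mathbb C)$. Maschke's theorem kills all terms with $p>0$, so the sequence collapses to its $p=0$ row, which is $H^\ast(X;\mathbb C)^G$. Either argument works; I would present the transfer argument as the primary one, since it avoids the spectral sequence.

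For part (2), first note that the Poincaré pairing $\eta(a,b)=\int_X a\cup b$ is $G$-invariant. Each $g\in G$ acts on $X$ by an automorphism of a complex variety, hence preserves orientation, so $\int_X g^\ast(a\cup b)=\int_X a\cup b$. Next, introduce the Reynolds operator
\[
p=\frac{1}{|G|}\sum_{g\in G}g^\ast,
\]
which is a projection of $H^\ast(X;\mathbb C)$ onto the invariant subspace. Invariance of $\eta$ gives $\eta(p a,b)=\eta(a,pb)$.

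Now suppose $a\in H^\ast(X;\mathbb C)^G$ satisfies $\eta(a,b)=0$ for every invariant $b$. For an arbitrary class $c$ we compute
\[
\eta(a,c)=\eta(pa,c)=\eta(a,pc)=0,
\]
because $pc$ is invariant. Ordinary Poincaré duality on the compact complex manifold $X$ then forces $a=0$. So the restricted pairing is nondegenerate. Equivalently, $H^\ast(X;\mathbb C)$ splits as an $\eta$-orthogonal direct sum of the invariants and $\ker p$.

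The only delicate point is orientation-preservation in part (2). It holds automatically because every $g$ acts holomorphically. Without it, $\eta$ could be anti-invariant for some $g$, and the argument would fail.
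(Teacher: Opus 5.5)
Your proof is correct. Part (2) is the paper's averaging argument essentially verbatim. You additionally justify the $G$-invariance of $\eta$ (holomorphic automorphisms preserve orientation), which the paper uses silently in the step $\eta(\mathrm{Av}(\alpha),\gamma)=\eta(\alpha,\mathrm{Av}(\gamma))$.

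For part (1), the paper's proof is exactly your ``alternative'': it uses the Leray--Serre spectral sequence of $X\to X\times_G EG\to BG$, where $E_2^{p,q}=H^p(BG;H^q(X;\mathbb C))$ vanishes for $p>0$, so the sequence collapses to the invariant row. Your primary transfer argument is a genuinely different and more elementary route.

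What the transfer route buys:
\begin{enumerate}
\item It avoids spectral sequences and local coefficient systems.
\item It exhibits the isomorphism explicitly as $q^\ast$ followed by $H^\ast(X\times EG)\cong H^\ast(X)$, i.e.\ as restriction to the fibre. So it is visibly a ring isomorphism compatible with the forgetful map $H^\ast_G(X)\to H^\ast(X)$.
\item That compatibility is what is silently used later, when $\eta^G$ and $T^G_d$ are compared with $\eta$ and $T_d$. The spectral-sequence route gives the same map as the edge homomorphism, but one has to say so separately.
\end{enumerate}
The two approaches are close cousins anyway, since the vanishing of $H^{p}(G;M)$ for $p>0$ over $\mathbb C$ is itself a transfer argument.

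One small wording point: the projection $X\times EG\to X$ is $G$-equivariant and a non-equivariant homotopy equivalence. It is not a $G$-homotopy equivalence, since $X$ may have fixed points while the action on $X\times EG$ is free. This does not matter, because equivariance of the induced map on cohomology is all you need.
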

\begin{proof}
 Consider the fibration
\[
X\to X\times_G EG\to BG.
\]
Its Leray-Serre spectral sequence satisfies:
\[
E_2^{p,q}=H^p(BG;H^q(X))\Rightarrow H_G^{p+q}(X).
\]
Since $G$ is finite, we have
\[
E_2^{p,q}=
\begin{cases}
H^q(X;\mathbb C)^G&\text{ if } p=0;\\
0&\text{ if } p>0.
\end{cases}
\]
Thus the spectral sequence collapses at the $E_2$ page, which implies that 
\[
H^{\ast}_G(X)= H^\ast(X;\mathbb C)^G.
\]
Consider the averaging map 
\[
\mathrm{Av}\colon H^{\ast}(X)\to H^{\ast}(X)^G,\quad \mathrm{Av}(\alpha)=\frac{1}{|G|}\sum_{g\in G}g^{\ast}\alpha.
\] 
If $\alpha\in H^{\ast}(X;\mathbb C)^G$ satifies $\eta(\alpha,\beta)_X=0$ for every $\beta\in H^{\ast}(X;\mathbb C)^G$. Then for any $\gamma\in H^{\ast}(X;\mathbb C)$,
\[
\eta(\alpha,\gamma)_X=\eta(\mathrm{Av}(\alpha),\gamma)_X=\eta(\alpha,\mathrm{Av}(\gamma))_X=0.
\]
Since $\eta$ is nondegenerate on $H^{\ast}(X;\mathbb C)$, $\alpha=0$. Thus, $\eta$ remains nondegenerate on $H^{\ast}(X;\mathbb C)^G$.
\end{proof}

\begin{proof}[Proof of \Cref{thm: inducing solutions}]
Let $\alpha,\beta\in H^{\ast}_G(X;\mathbb C)$. By \Cref{lem:finite G-chomology}, 
\[
\eta^G(T^G_d(\alpha),\beta)=\eta(T^G_d(\alpha),\beta),
\]
and 
\[
\eta^G(T^G_d(\alpha),\beta)=\langle\alpha,\beta\rangle^{X,G}_{0,2,d}=\langle\alpha,\beta\rangle^{X}_{0,2,d}
=\eta(T_d(\alpha),\beta).
\]
By \Cref{lem:finite G-chomology}, the nondegeneracy of $\eta$ on $H^{\ast}(X;\mathbb C)^G$ forces $T_d^G=T_d\vert_{H^{\ast}(X;\mathbb C)^G}$. Consequently, for $\alpha\in H^{\ast}_G(X;\mathbb C)$,
\[
\begin{aligned}
E^G_{\psi}(t)(\alpha)&=c_1^G(X)\cup\alpha+\sum_{\substack{d\in\mathrm{NE}(X/Y)_{\mathbb N}\\ (c_1(X)-\omega)\cdot d>0}}(c_1(X)\cdot d)t^{c_1(X)\cdot d}e^{-\psi\cdot d}T^G_d(\alpha)\\
&=c_1(X)\cup\alpha+\sum_{\substack{d\in\mathrm{NE}(X/Y)_{\mathbb N}\\ (c_1(X)-\omega)\cdot d>0}}(c_1(X)\cdot d)t^{c_1(X)\cdot d}e^{-\psi\cdot d}T_d(\alpha)\\
&=E_{\psi}(t)(\alpha).
\end{aligned}
\]
Therefore, $\Phi^G_t\coloneq\Phi_t\vert_{H^{\ast}_G(X;\mathbb C)}$ is a fundamental solution of the $G$-truncated quantum differential equation \eqref{eq:truncated qde2}, and clearly 
\[
\int_X^{eq}\Phi^G_t(\alpha)
=\int_X\Phi_t(\alpha)
=Z(\alpha).
\]
 Let $\sigma^G_t\coloneq\mathrm{Res}^{-1}\sigma_{t}$. By \Cref{Inducing_for_finite_G}, $\sigma^G_t$ forms a quasi-convergent path in $\Stab(X_G)$ which solves \Cref{proposal:main} via $\Phi^G_t$. 

Now let $r\in\mathbb R$ be the real part of an eigenvalue of $\frac{-1}{z}E_{\psi}^G(1)$. observe that
\[
F^r_G=F^r_1\cap H^{\ast}(X;\mathbb C)^G.
\]
Take $\alpha\in F^r_G$. By the spanning condition for $H^{\ast}(X;\mathbb C)$, there exist constants $c_1,\dots,c_k\in\mathbb C$ and limit semistable objects $E_1,\dots,E_k\in D^b(X)$ with respect to $\sigma_t$ such that 
\[
\mathop{\lim\inf}\limits_{t\to\infty}\frac{|Z_t(E_i)|}{\|\Phi_t(v(E_i))\|} \quad\text{ for all } i,
\]
and 
\[
\alpha=\sum_{i=1}^kc_iv(E_i).
\]
Let $\cE_i\coloneq\mathrm{Ind}^G(E_i)=(\oplus_{g\in G}\phi_g^{\ast}E_i,\{\theta_h\}_{h\in G})$. By \Cref{Inducing_for_finite_G}, each $\cE_i$ is limit semistable with respect to $\sigma^G_t$. Since $\alpha\in H^{\ast}(X;\mathbb C)^G$ is $G$-invariant, 
\[
\alpha=\sum_{i=1}^kc_i\left(\frac{1}{|G|}\sum_{g\in G}\phi_g^{\ast}v(E_i)\right)=\sum_{i=1}^kc_iv^G(\cE_i).
\]
Moreover,
\[
\mathop{\lim\inf}\limits_{t\to\infty}\frac{|Z^G_t(\cE_i)|}{\|\Phi^G_t(v^G(\cE_i))\|}=\mathop{\lim\inf}\limits_{t\to\infty}\frac{|Z_t(E_i)|}{\|\Phi_t(v(E_i))\|}>0.
\]
Thus the classes $v^G(\cE_i)$ span $F^r_G$, and the pair $(\sigma^G_t,\Phi^G_t)$ satisfies the spanning condition with respect to $\Phi^G_t$. By \Cref{lem: geometric preserving for finite G}, $\sigma^G_{t_0}$ is geometric whenever $\sigma_{t_0}$ is geometric.
\end{proof}

\begin{corollary}[\cite{krug2024endomorphismalgebrasequivariantexceptional}]
\label{Exceptional-G-exceptional}
Suppose $\cD$ admits a (strong) full exceptional collection of the form
\[
\cD=\langle E_{1,1},\dots,E_{1,l_1},E_{2,1},\dots,E_{2,l_2},\dots,E_{k,1},\dots,E_{k,l_k}\rangle
\]
such that a finite group $G$ acts transitively on each block $(E_{i,1},\dots,E_{i,l_l})$, i.e. there is a transitive $G$-action on the index set $\{1,\dots,l_i\}$ with $g_{\ast}E_{i,l_i}\cong E_{i,g(l_i)}$. Let $H_i$ be the stabilizer of the first member $E_i:=E_{i,1}$ of the $i$-th block. Assume that there exists $\cE_i=(E_i,(\theta_h))\in\cD_{H_i}$ with 
\[
\mathrm{Res}^1_{H_i}(\cE_i)=E_i=E_{i,1}.
\]
Then there is an induced (strong) full exceptional collection on $\cD_G$, namely
\[
\left\langle\mathrm{Ind}^G_{H_1}(\rho\otimes_{\k}\cE_1)_{\rho\in\mathrm{Irr}(H_1)},\dots,\mathrm{Ind}^G_{H_k}(\rho\otimes_{\k}\cE_k)_{\rho\in\mathrm{Irr}(H_k)}\right\rangle,
\]
where $\mathrm{Irr}(H_i)$ denotes the set of irreducible representation of $H_i\subset G$.
\end{corollary}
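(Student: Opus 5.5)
The plan is to deduce the corollary from \Cref{SODsfromQuasipath} and \Cref{Inducing_for_finite_G}, rather than by checking Hom-vanishing for the induced objects by hand.

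\textbf{Step 1: a $G$-invariant quasi-convergent path upstairs.} Set $\cD_i:=\langle E_{i,1},\dots,E_{i,l_i}\rangle$. Since $G$ permutes each block transitively and the collection is full, each $\cD_i$ is $G$-stable. Since the objects inside a block have the same position in the collection up to the $G$-action, we may assume they are mutually completely orthogonal (this is the standard situation for Krug's theorem, and I would add it if not automatic). Then $\cD_i\simeq\bigoplus_j D^b(\k)$, and $\cD=\langle\cD_1,\dots,\cD_k\rangle$ is a polarizable semiorthogonal decomposition. On $\cD_i$ we choose the stability condition in which every $E_{i,j}$ is stable of phase $0$ with $Z(E_{i,j})=1$; it is $G$-invariant by construction. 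We then glue the rescaled conditions $z_i(t)\cdot\sigma_i$ as in the sketch of \Cref{SODsfromQuasipath}, with the $z_i$ chosen so that consecutive differences have limiting argument in $(0,\pi)$. Because each $\cD_i$ is $G$-stable and each $\sigma_i$ is $G$-invariant, the glued hearts and central charges are $G$-invariant. This gives a $G$-invariant quasi-convergent path $\sigma_\bullet$ whose $\sim^{\inf}$-classes are exactly the $\cD_i$.

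\textbf{Step 2: induce the path and identify the limit classes.} By \Cref{Inducing_for_finite_G}, $\sigma^G_\bullet=\mathrm{Res}^{-1}\sigma_\bullet$ is quasi-convergent on $\cD_G$. Its limit semistable objects are those whose restriction is limit semistable, and phases are computed after restriction. Hence the semiorthogonal decomposition given by \Cref{SODsfromQuasipath} is $\cD_G=\langle(\cD_1)_G,\dots,(\cD_k)_G\rangle$, where $(\cD_i)_G=\mathrm{Res}^{-1}(\cD_i)$.

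\textbf{Step 3: describe each piece $(\cD_i)_G$.} Because $\cD_i$ is a $G$-orbit of orthogonal exceptional objects, we have the Morita-type equivalence $(\cD_i)_G\simeq(\langle E_i\rangle)_{H_i}\simeq D^b(\Rep(H_i))$. This is the step I expect to be the main obstacle. The plan is to show that $\mathrm{Ind}^G_{H_i}$ restricted to $\langle E_i\rangle_{H_i}$ is an equivalence onto $(\cD_i)_G$, using the decomposition $\mathrm{Res}\,\mathrm{Ind}=\bigoplus_{g\in G/H_i}g^\ast$ together with orthogonality of distinct $E_{i,j}$. The lift $\cE_i$ is what trivializes the possible cocycle, so that $\langle E_i\rangle_{H_i}\simeq D^b(\Rep(H_i))$ via $\rho\mapsto\rho\otimes\cE_i$. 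Under this equivalence the irreducible representations $\rho$ give a full orthogonal exceptional collection (here $\k$ is algebraically closed, so each $\mathrm{End}(\rho)=\k$). Therefore the objects $\mathrm{Ind}^G_{H_i}(\rho\otimes\cE_i)$, $\rho\in\mathrm{Irr}(H_i)$, form a full exceptional collection of $(\cD_i)_G$.

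\textbf{Step 4: strongness.} For $\rho,\rho'$ and $i<i'$, Frobenius reciprocity gives $\Hom^\bullet(\mathrm{Ind}(\rho\otimes\cE_i),\mathrm{Ind}(\rho'\otimes\cE_{i'}))$ as the $H_i$-invariants of $\rho^\vee\otimes\bigoplus_g\Hom^\bullet(E_i,g^\ast E_{i'})\otimes(\ldots)$. Taking invariants is exact in characteristic coprime to $|G|$, so this is a direct summand of $\Hom^\bullet$ upstairs and is concentrated in degree $0$ whenever the original collection is strong. Concatenating over $i$ yields the claimed collection.
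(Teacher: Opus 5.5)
Your argument is correct. Its first half is the paper's argument: a $G$-invariant glued path on $\cD$, pulled back by $\mathrm{Res}^{-1}$ via \Cref{Inducing_for_finite_G}, then read off through \Cref{SODsfromQuasipath}. The two routes part ways at identifying the limit classes, and there your version is the accurate one.

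The paper claims that the $G$-invariant path separates every $E_{i,j}$. It then uses $\mathrm{Ind}^G_{H_i}\mathrm{Ind}^{H_i}_1\mathrm{Res}^1_{H_i}(\cE_i)\cong\mathrm{Ind}^G_{H_i}(\k\langle H_i\rangle\otimes\cE_i)$ and faithfulness of $\mathrm{Res}$ to conclude that each $\sim^{\inf}$-class of $\mathrm{Res}^{-1}\sigma_\bullet$ is a single $\mathrm{Ind}^G_{H_i}(\rho\otimes\cE_i)$. This cannot hold. $G$-invariance forces $Z_t(E_{i,j})=Z_t(E_{i,g(j)})$ with equal phases, and $Z^G_t(\mathrm{Ind}^G_{H_i}(\rho\otimes\cE_i))=l_i\dim\rho\,Z_t(E_{i,1})$ for every $\rho$. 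So whole blocks upstairs, and all $\rho\in\mathrm{Irr}(H_i)$ downstairs, lie in one class. The path therefore only yields $\cD_G=\langle(\cD_1)_G,\dots,(\cD_k)_G\rangle$, exactly as in your Step 2.

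What actually splits each piece into the orthogonal exceptional objects $\mathrm{Ind}^G_{H_i}(\rho\otimes\cE_i)$ is your Step 3: the equivalence $(\cD_i)_G\simeq\langle E_i\rangle_{H_i}\simeq D^b(\Rep(H_i))$ via $\mathrm{Ind}^G_{H_i}$, Mackey, and the lift $\cE_i$. Your Step 4 then supplies strongness, which the paper never checks.

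The comparison shows that the stability-theoretic input only produces the coarse induced decomposition. That decomposition could also be obtained directly from the $G$-stability of the $\cD_i$. The representation-theoretic computation carries the rest. The paper's version is shorter only because it attributes to the path a finer decomposition than the path can detect.

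Finally, the within-block orthogonality you hedged on is automatic. Set $N(a):=\#\{b\neq a:\Hom^\bullet(E_{i,a},E_{i,b})\neq0\}$. This is constant on the transitive $G$-set $\{1,\dots,l_i\}$, and exceptionality gives $N(l_i)=0$, so $N\equiv 0$.
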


\begin{proof}
According to \Cref{SODsfromQuasipath}, there is quasi-convergent path $\sigma_{\bullet}\in\Stab(\cD)$ which recovers the full exceptional collection $\langle E_{1,1}.\dots,E_{k,l_k} \rangle$.
Using the gluing construction of quasi-convergent paths from \Cref{SODsfromQuasipath}, we may choose $\sigma_{\bullet}$ to be $G$-invariant.

 Let $\cE\in\mathrm{Res}^{-1}\cP_{\bullet}$. By \Cref{Inducing_for_finite_G}, the object $E=\mathrm{Res}(\cE)$ is limit semistable with respect to $\sigma_{\bullet}$. Thus, $\cD^{E}\cong E_{i,j}$ for some $1\le i\le k$ and $1\le j\le l_i$. Then   
\begin{equation*}
\begin{aligned}
\cD_G^{\cE}\subset\mathrm{Ind}(\cD^{E_{i,j}})&\cong\mathrm{Ind}(\cD^{E_{i,1}})\\
&\cong   \mathrm{Ind}_{H_i}^{G}\mathrm{Ind}^{H_i}_1\mathrm{Res}^1_{H_i}(\cE_i)\\
&\cong   \mathrm{Ind}^G_{H_i}(\k\langle H_i\rangle\otimes_{\k}\cE_i)\\
&\cong   \bigoplus_{\rho\in\mathrm{Irr}(H_i)}\mathrm{Ind}((\rho\otimes_{\k}\cE_i)^{\oplus m_{i,\rho}}),
\end{aligned}    
\end{equation*}
where $\k\langle H_i\rangle$ is the regular representation of $H_i$, and 
\[
m_{i,\rho}=\frac{\dim_{\k}\rho}{\dim_{\k}\mathrm{End}_{H_i}(\rho)}.
\]
On the other hand, $\mathrm{Res}(\cD_G^{\cE})=\cD^{E}\cong E_{i,j}$. Since $\mathrm{Res}$ is faithful, there exists $\rho\in\mathrm{Irr}(H_i)$ such that $\cD^{\cE}_G\cong\mathrm{Ind}(\rho\otimes\cE_i)$. Consequently, $\mathrm{Res}^{-1}(\sigma_{\bullet})$ is the required quasi-convergent path in $\Stab(\cD^G)$ which recovers $\left\{\mathrm{Ind}^G_{H_i}(\rho\otimes_{\k}\cE_i)_{\rho\in\mathrm{Irr}(H_i)}\right\}$.
\end{proof}

\subsection{Applications}
\label{subsec:application of inducing}
\subsubsection{Projective spaces}
Let $V$ be an $m$-dimensional vector space over $\mathbb C$ and $\mathbb P(V)\cong\bP^{m-1}$ its projectivization. The derived category of $D^b(\bP^{m-1})$ admits a full exceptional collection
\[
\{\cO,\cO(1),\dots,\cO(m-1)\}.
\]
Let $G$ be a finite linearly acting on $\bP(V)$, i.e. there is a group homomorphism
\[
\rho:G\to\GL(V)
\]
such that for $g\in G$ and $[v]\in\bP(V)$ (where $v\ne0\in V$), 
\[
g\cdot[v]=[\rho(g)(v)].
\]
Each $\cO(i)$ then inherits a natural $G$-equivariant structure. Let $V_1,\dots,V_m$ be the irreducible representations of $G$ over $\mathbb C$. By \Cref{Exceptional-G-exceptional}, there is an induced strong semiorthogonal decomposition on $D^b_G(\bP^{m-1})$:
\[
\begin{pmatrix}
\cO\otimes V_1&\cO(1)\otimes V_1&\cdots&\cO(m-1)\otimes V_1\\
\vdots&\vdots&\ddots&\vdots\\
\cO\otimes V_m&\cO(1)\otimes V_m&\cdots&\cO(m-1)\otimes V_m
\end{pmatrix}
\]
satsifying
\begin{enumerate}
\item for $i<j$, 
\[
\Hom^k_G(\cO(j)\otimes V_p,\cO(i)\otimes V_q)=0 \text{ for any } k.
\]
\item for $i=j$,
\[
\Hom^k_G(\cO(i)\otimes V_p,\cO(i)\otimes V_q)=
\begin{cases}
\mathbb C &\text{ for } p=q,k=0;\\
0 &\text{ else.}
\end{cases}
\]
\end{enumerate}

We briefly recall the construction of quasi-convergent paths and fundamental solutions for the non-equivariant \Cref{proposal:main} on $\bP^{m-1}$, following \cite{zuliani2024semiorthogonaldecompositionsprojectivespaces}. Denote by $B$ the open locus of $H^{\ast}(\bP^{m-1};\mathbb C)$ such that for any $\tau\in B$, the summation in $\star_{\tau}$ is convergent. See \cite{manin1999frobenius} for the proof of the non-emptyness of $B$. The quantum differential equation is defined via a meromorphic connection $\tilde{\Delta}$ on the trivial bundle 
\[
H^{\ast}(\bP^{m-1};\mathbb C)\times (B\times\bP^1)\to B\times\bP^1.
\]
In coordinates $(\tau,z)$, $\tilde{\Delta}$ is defined by
\[
\tilde{\triangledown}_{\partial_{\alpha}}=\partial_{\alpha}+\frac{1}{z}(\alpha\star_{\tau}),\quad \alpha\in T_{\tau}B\cong H^{\ast}(\bP^{m-1};\mathbb C),
\]
\[
\tilde{\triangledown}_{z\partial_z}=z\partial_z-\frac{1}{z}(E\star_{\tau})+\mu,
\]
where $E$ is the Euler vector field and $\mu$ is the grading operator on $H^{\ast}(\bP^{m-1};\mathbb C)$. According to \cite{10.1215/00127094-3476593}, for a fixed $\tau\in B$, the canonical fundamental solution $\Phi$ of $\tilde{\triangledown}_{|_{\tau}}$ induces an isomorphism
\[
\begin{aligned}
\Phi\colon H^{\ast}(\bP^{m-1};\mathbb C)&\xrightarrow{\cong}\left\{s\colon\mathbb R_{>0}\to H^{\ast}(\bP^{m-1};\mathbb C)\mid \tilde{\triangledown}_{|_{\tau}}s=0\right\},\\
\end{aligned}
\]
given by 
\[
\Phi(\alpha)(z)\coloneq(2\pi)^{\frac{1-m}{2}}S(\tau,z)z^{-\mu}z^{\rho}\alpha,
\]
where $S(\tau,-)$ is a specific holomorphic function on $\mathbb R_{>0}$ and $\rho=(c_1(\bP^{m-1})\cup)\in\End(H^{\ast}(\bP^{m-1};\mathbb C))$. By \cite[Theorem 5.15]{zuliani2024semiorthogonaldecompositionsprojectivespaces}, there exists a quasi-convergent path $\sigma_t=(Z_t,\cP_t)\in\Stab(\bP^{m-1})$ for $t\in\mathbb R$ whose central charge $Z_t$ is the \emph{quantum cohomology central charge}
\[
Z_{t}(E)\coloneq(2\pi t)^{\frac{m-1}{2}}\int_{\bP^{m-1}}\Phi(\hat{\Gamma}\cdot\mathrm{Ch}(E)).
\]
with $\hat\Gamma$ the Gamma class of $\bP^{m-1}$ and $\mathrm{Ch}$ the modified Chern character defined as
\[
\mathrm{Ch}\coloneq(2\pi i)^{\deg}\ch.
\]
Applying \Cref{thm: inducing solutions} yields a quasi-convergent path $\sigma_t^G=(Z_t^G,\cP_t^G)$ for $t\in\mathbb R_{>0}$ with the $G$-equivariant quantum chohomology central charge $Z_t^G$. In the special case of the projective plane $\mathbb P^2$, combining \cite[Theorem 5.17]{zuliani2024semiorthogonaldecompositionsprojectivespaces} with \Cref{Exceptional-G-exceptional} gives
the induced semiorthogonal decomposition is given by 
\[
\begin{pmatrix}
\cO\otimes V_1&\cO(1)\otimes V_1&\cO(2)\otimes V_1\\
\cO\otimes V_2&\cO(1)\otimes V_2&\cO(2)\otimes V_2\\
\cO\otimes V_3&\cO(1)\otimes V_3&\cO(2)\otimes V_3
\end{pmatrix}
\]
as $r\to0^+$.

\subsubsection{Blow-up surfaces}
Let $X$ be a smooth del Pezzo surface of degree $d$ $(1\le d\le9)$ given by blowing up $\bP^2$ at $r$ distinct points $x_1,\dots,x_r\in\bP^2$ in general position. Write $\pi:X\to\mathbb P^2$ for the blow-up morphism and let $E_i=\pi^{-1}(x_i)$ be the exceptional divisors. According to \cite{DOOrlov_1993} the derived category of coherent sheaves on $X$ has the following full exceptional collection
\[
\begin{pmatrix}
\cO_{E_1}(-1)&&&\\
\vdots&\pi^{\ast}\cO_{\bP^2}&\pi^{\ast}\cO_{\bP^2}(1)&\pi^{\ast}\cO_{\bP^2}(2)\\
\cO_{E_r}(-1)&&&
\end{pmatrix},
\]
where the sheaves $\cO_{E_1}(-1),\dots,\cO_{E_r}(-1)$ are mutually orthogonal. Let $G$ be a finite group acting on $X$ satisfying the conditions in \Cref{Exceptional-G-exceptional}. For example, take $G$ to be a finite subgroup of $\mathrm{PGL}(3)$ that permutes the $r$ points. In this case, the $G$-action on the base $\bP^2$ is inherited from the original $\mathrm{PGL}(3)$-action, and for $g\in G$, we have
\[
g(E_i)=E_j, \quad \text{ if } g(x_i)=x_j.
\]
Therefore, $\pi^{\ast}\cO_{\bP^2}(k)$ inherits a $G$-equivariant structure via the pullback of the $\mathrm{PGL}(3)$-action on $\bP^2$. Meanwhile, the $G$-equivariant structure on $\cO_{E_i}(-1)$ is given by the natural isomorphism
\[
\theta_g\colon\cO_{E_i}(-1)\to g^{\ast}\cO_{E_{g(i)}}(-1).
\]
By \Cref{Exceptional-G-exceptional}, this yields an induced semiorthgonal decomposition of $D^b_G(X)$. According to \cite{Dolgachev2009}, for $4\le r\le8$, the automorphism group $\Aut(X)$ is finite, and such a group $G$ exists for suitable configurations of these $r$ points. 

For simplicity, we restrict to the case where $\pi:X\to Y$ is the blow-up of a single point on a del Pezzo surface $Y$ of degree $d\ge2$ to study the \Cref{proposal:main}. Denote by $E$ the exceptional divisor. The construction of solutions to the non-equivariant \Cref{proposal:main} for blow-up surfaces is due to \cite{karube2024noncommutativemmpblowupsurfaces}. A general class $\zeta(t)\in H^{\ast}(X;\mathbb C)$ can be written as
\[
\zeta(t)=(\zeta_0(t),\pi^{\ast}\zeta_1(t)+c(t)E,\zeta_2(t)),
\]
where $\zeta_i(t)\in H^{2i}(X;\mathbb C)$ and $c(t)\in\mathbb C$. The truncated quantum differential then becomes:
\[
t\frac{d\zeta(t)}{dt}=\frac{1}{z}(0,\pi^{\ast}K_{Y}\zeta_0+(\zeta_0+e^{-\psi E}tc)E,K_Y\zeta_1-c),
\]
where $K_Y$ is the canonical class of $Y$. By \cite{karube2024noncommutativemmpblowupsurfaces}, there exists a quasi-convergent path of stability conditions $\sigma_t=(Z_t,\cP_t)$ for $t\in[t_0,\infty)$, whose central charge takes the form
\[
Z_t(F)=w(\lambda,Z_0)(\mathrm{Ei}(\lambda t)-\mathrm{Ei}(\lambda t_0))\ch_1(F)\cdot E+Z_0(e^{-sE}\ch(F)),
\]
with
\begin{enumerate}
\item $Z_0=Z_{t_0}$ is the initial central charge, chosen from a specific chamber;
\item $\mathrm{Ei}(z)$ is the exponential integral function, defined as
\[
\mathrm{Ei}(z)\coloneq-\int^{\infty}_{-z}\frac{e^{-t}}{t}dt;
\]
\item $\lambda\in\mathbb R\oplus i[-1,1]$ and $s\in\mathbb R$;
\item $w(s,\lambda,Z_0)$ is certain normalization function.
\end{enumerate}
Applying \Cref{thm: inducing solutions} now produces a  quasi-convergent path $\sigma_t^G=(Z_t^G,\cP_t^G)$ in $\Stab(X_G)$ for $t\in[t_0,\infty)$ that fulfills the requirements of \Cref{proposal:main}.

\section{Quasi-convergent paths of $\bT$-stability conditions}
\label{sec: paths in T-stab}
In this section, we first introduce the notion of $\bT$-stability conditions on $\bT$-categories, which generalizes the q-stability conditions on $\mathbb X$-categories in \cite{Ikeda_Qiu_2023}, and establish the deformation property of spaces of $\bT$-stability conditions. We then construct quasi-convergent paths in spaces of $\bT$-stability conditions to fulfill the requirements of \Cref{proposal:main} for equivariant projective spaces from small quantum cohomology.

\subsection{$\bT$-stability conditions on $\bT$-categories}
\label{sec:T-stab}
A $\bT$\emph{-category}, denoted by $\cD_{\bT}$, consists of a triangulated category $\cD$ together with $m$ commuting auto-equivalences
\[
T_i:\cD_{\bT}\to\cD_{\bT}, \quad i=1,\dots,m.
\] 
We write $E[\sum_{i=1}^m l_i T_i]:=T_1^{l_1}\circ\cdots\circ T_m^{l_m}(E)$ for $l_i\in\mathbb Z$ and $E\in\cD_{\bT}$. Set 
\[
R(\bT)=\mathbb Z[T_1^{\pm},\dots,T_m^{\pm}]
\]
and define the $R(\bT)$-action on $K(\cD_{\bT})$ by 
\[
T_{i}^l\cdot[E]:=[E[lT_i]].
\]
Consequently, $K(\cD_{\bT})$ is endowed with the structure of an $R(\bT)$-module. In this context, $\mathrm{Aut}(\cD_{\bT})$ of auto-equivalences is taken to consist of auto-equivalences that commute with $\{T_i\}$. 

We now provide two examples of $\bT$-categories.
\begin{example}[$\bT$-graded algebras]
\label{ex:T-algebra}
A \emph{differential} $\bT$-\emph{graded (mdg) algebra} $A$ is described as an algebra
\[
A:=\bigoplus_{l_i\in\mathbb Z}A^{l+\sum_{i=1}^{m}l_iT_i}
\]   
graded by $\mathbb Z T_1\oplus\mathbb ZT_2\cdots\oplus\mathbb ZT_m$ with differential $d:A^{l+\sum_{i=1}^{m}l_iT_i}\to A^{l+1+\sum_{i=1}^{m}l_iT_i}$ of degree $1$. Let $M=\oplus_{m_i\in\mathbb Z}M^{l+\sum_{i=1}^{m}l_iT_i}$ be a $\bT$-graded modules over $A$ with the differential $d$. The degree shift \[M'=M[k+\sum_{i=1}^mk_iT_i]\] is defined by 
\[
(M')^{l+\sum_{i=1}^{m}l_iT_i}:=M^{l+k+\sum_{i=1}^m(l_i+k_i)T_i}
\]
with differential $d':=(-1)^kd$. Then we obtain $\bT$-categories:
\begin{enumerate}
\item $D(A)$ the derived category of $\bT$-graded modules over $A$;
\item $\mathrm{Perf}\,A$ the \emph{perfect derived category} as the smallest full triangulated subcategory of $D(A)$ consisting of $A$ and closed under taking direct summands.
\end{enumerate}
The \emph{Calabi-Yau}-$\bT$ \emph{completion} of $A$ is defined by
\[
\Pi_{\bT}(A):=T_{A}(\theta)=A\oplus\theta\oplus(\theta\otimes_A\theta)\oplus\cdots
\]
where $\theta:=\Theta_A(T_1+\cdots+T_m-1)$ and $T_A(\theta)$ is the tensor algebra of $\theta$ over $A$. We now assume that $A$ is a homologically smooth $\bT$-graded algebra and $\Theta_{A}$ be its inverse dualizing complex. According to \cite[Theorem 6.3]{Keller:2011nql}, the Calabi-Yau-$\bT$ completion $\Pi_{\bT}(A)$ becomes a \emph{Calabi-Yau-}$\bT$ \emph{algebra} in the sense that $A$ is 
\[
\Theta_A\xrightarrow{\sim}A[T_1+\cdots+T_m]
\]
Furthermore, $D_{fd}(\Pi_{\bT}(A))$ is \emph{Calabi-Yau-}$\bT$ category, i.e. for any objects $X,Y$ in $\cD$ we have a natural isomorphism 
\[
\Hom(X,Y)\xrightarrow{\sim}\mathrm{DHom}(Y,X[T_1+\cdots+T_m]).
\]
\end{example}

\begin{example}[Motivating example: $\bT$-equivariant sheaves]
\label{ex:T-sheaf}
Let $X$ be a smooth projective variety with an action of the torus $\bT=(\mathbb C^{\ast})^m$. By definition, the ring of finite dimensional complex representations of $\bT$ is 
\[
R(\bT)=\mathbb C[T_1^{\pm},\dots,T_m^{\pm}],
\]
where each $T_i$ corresponds to the $i$-th coordinate character of $\bT$. The structural morphism $\pi:X\to\mathrm{Spec}(\mathbb C)$ induces two morphisms 
\begin{equation*}
\begin{aligned}
  \pi_{\ast}:K(X)&\to K(\mathrm{Spec}\,\mathbb C)\cong\mathbb Z\\
  E&\mapsto\sum(-1)^i\mathrm{rk} H^i(X,E),
\end{aligned}    
\end{equation*}
and 
\begin{equation*}
\begin{aligned}
  \pi_{\ast}^G:K^\bT(X)&\to R(\bT)\\
  \cE&\mapsto\sum(-1)^i[H^i(X,\cE)],
\end{aligned}    
\end{equation*}
where $[H^i(X,\cE)]$ denotes the $R(\bT)$-class of the cohomology space $H^i(X,\cE)$. Then the Grothendieck group $K_\num(X)$ and the equivariant Grothendieck group $K_{\num}^\bT(X)$ with a $\mathbb C$-algebra and an $R(\bT)$-algebra structures, respectively, and we have the following commutative diagram
\[
\begin{tikzcd}
\mathcal D_{\bT}^b(X) \arrow[d, "F_{\bT}"'] \arrow[r, "R\pi_{\ast}^{\bT}"] & \mathcal D^b(\Rep(\bT)) \arrow[d, "F"] \\
\mathcal D^b(X) \arrow[r, "R\pi_{\ast}"]                       & \mathcal D^b(\mathbb C),      
\end{tikzcd}
\]
where $F,F_{\bT}$ are the corresponding forgetful functors. For any $\cE\in D^b_{\bT}(X)$, the action of $T_i$ on $\cE$ is defined by
\[
T_i\cdot\cE:=\cE\otimes T_i,
\]
making $\{T_i\}$ $m$ commuting auto-equivalences in $D^b_{\bT}(X)$. Thus $D_{\bT}^b(X)$ equipped with $\{T_i\}$ provides a $\bT$-category. For simplicity, we denote it just by $D^b_{\bT}(X)$.  Finally, let $G$ be a reductive group acting on $X$ and $\bT\subset G$ be a maximal torus with Weyl group $W$. Using the isomorphisms
\[
H^{\ast}_G(X)\cong H^{\ast}_{\bT}(X)^W,\quad K^G(X)_{\mathbb Q}\cong K^{\bT}(X)_{\mathbb Q}^W,
\]
we can regard $D^b_G(X)$ as a $\bT$-category via restriction to $\bT$ together with the $W$-invariance structure. 
\end{example}

\begin{definition}
\label{def:T-stab}
A \emph{pre-$\bT$-stability condition} $(\sigma,\s)$ on a $\bT$-category $\cD_{\bT}$ consists of a (Bridgeland) pre-stability condition $\sigma=(Z,\cP)$ on $\cD_{\bT}$ and a tuple $\s=(s_1,\dots,s_m)\in\mathbb C^m$ satisfying 
\begin{equation}
\label{eq:Gepner}
T_i(\sigma)=s_i\cdot\sigma, \text{ for } i=1,\dots,m.
\end{equation}
\end{definition}

\begin{remark}
When $m=1$, \eqref{eq:Gepner} is the familier Gepner point, which has been studied in \cite{Toda1412,10.1093/imrn/rnv125}. We therefore refer to \eqref{eq:Gepner} a \emph{multi-Gepner} point. 
\end{remark}

For a fixed tuple $\s\subset\mathbb C$, define the \emph{evaluation map}
\[
\overline{\ev}_{\s}:R(\bT)\to\mathbb C, \quad T_i \mapsto e^{\i\pi s_i}.
\]

To spell out the conditions for \eqref{eq:Gepner}, we provide the following equivalent definition of $\bT$-stability condition:

\begin{definition}
A pre-$\bT$-stability condition $(\sigma,\s)$ consists of a pre-stability condition $\sigma=(Z,\cP)$ on $\cD_{\bT}$ and a tuple $\s=(s_1,\dots,s_m)\in\mathbb C^m$ of complex numbers satisfying:
\begin{enumerate}
\item the slicing satisfies $\cP(\phi+\Re(s_i))=\cP(\phi)[T_i]$  for all $\phi\in\mathbb R$;
\item the central charge $Z:K(\cD_{\bT})\to\mathbb C$ is $R(\bT)$-linear and factors through $\overline\ev_{\s}$.
\end{enumerate} 
\end{definition}

For the remainder of the paper, we make the following assumption:
\begin{assumption}
\label{as:free of K}
The Grothendieck group $K(\cD_\bT)$ is free of finite rank over $R(\bT)$, i.e. $K(\cD_{\bT})\cong R(\bT)^n$ for some $n\in\mathbb N$.  
\end{assumption}

We note that \Cref{as:free of K} holds automatically in the context of \Cref{ex:T-algebra}. In the setting of \Cref{ex:T-sheaf}, where $X$ is a smooth projective with an action of $\bT$ and let $\pi:X\to\mathrm{Spec}(\mathbb C)$ be the structural morphism discussed in \Cref{ex:T-sheaf}, the {equivariant Grothendieck-Euler-Poincar\'{e} characteristic of an object} $\cE\in D^b_{\bT}(X)$ is defined as 
\[
\chi^{\bT}(\cE):=\pi_{\ast}^{\bT}(\cE),
\]
and the corresponding {equivariant Grothendieck-Euler-Poincar\'{e} paring} is given by
\[
\chi^{\bT}(\cE,\cF):=\chi^{\bT}(\cE^{\ast}\otimes \cF),
\]
for any $\cE,\cF\in D^b_{\bT}(X)$. The equivariant numerical Grothendieck group $K_{\num}^{\bT}(X)$ is defined as the quotient of $K(X)$ by the radical of the form $\chi^{\bT}(-,-)$. As indicated in \Cref{sec:pre on stab}, the Grothendieck group $K^{\bT}(X)$ is taken to be the numerical Grothendieck group $K_\num^\bT(X)$. 
Recall that the equivariant formality result of \cite{GKM97} implies that $H^{\ast}_\bT(X)$ is a free module over $H^{\ast}_\bT(\mathrm{pt})$ of rank equal to $\dim H^{\ast}(X)$. Furthermore, one can also show that $K^{\bT}(X)$ is a free $R(\bT)$-module and its rank equals $\mathrm{rk}\,K(X)$. Thus \Cref{as:free of K} is satisfied for $D^b_{\bT}(X)$.

Let $\|\cdot\|$ be a fixed norm on $\Gamma\otimes_{\mathbb Z}\mathbb R$. Under the \Cref{as:free of K}, we extend the norm $\|\cdot\|$ to $K(\cD_{\bT})$ by setting
\[
\|\sum_{i=1}^{l}T_1^{k_{i,1}}\cdots T_m^{k_{i,m}}\alpha_i\|:=\sum_{i=1}^l|e^{\i\pi(k_{i,1}s_1+\cdots+k_{i,m}s_m)}|\|\alpha_i\|,
\]
for $\alpha_i\in\Gamma$.

Using the \eqref{support_property}, we can now define the analogue support property for pre-$\bT$-stability conditions:
\begin{definition}
  \label{def:support property for T-stab}
  A pre-$\bT$-stability condition \((\sigma, \s)\) satisfies the $\bT$\emph{-support property} if there exists a constant $C_{\sigma}>0$ such that for every $\sigma$-semistable object $E$ with $[E]=\alpha\in K(\cD_{\bT})$,
\[
\|\alpha\|\le C_{\sigma}\cdot|Z(\alpha)|.
\]
\end{definition}

A pre-$\bT$-stability condition $(\sigma,\s)$ on $\cD_{\bT}$ is called a $\bT$-stability condition if it satisfies the $\bT$-support property. We denote by $\bT\Stab_{\s}\cD_{\bT}$ the set of all $\bT$-stability conditions with respect to a fixed tuple $\s\subset\mathbb C$.  Since the space $\bT\Stab_{\s}\cD_{\bT}$ is a subset of the usual stability space $\Stab(\cD_{\bT})$, the distance $d$ on $\Stab(\cD_{\bT})$ induces a topology on $\bT\Stab_{\s}\cD_{\bT}$. Let $\bC_{\s}$ denote the complex numbers equipped with the $R(\bT)$-module structure via the evaluation map $\ev_{\s}$. The deformation property of $\bT\Stab_{\s}\cD_{\bT}$ is given by the following:

\begin{proposition}
\label{pro:defomation of T-stab}
The forgetful map
\begin{equation*}
\begin{aligned}
\cZ_{\s}:\bT\Stab_{\s}(X)&\to\Hom_{R}(K(\cD_{\bT}),\mathbb C_{\s})\\
((Z,\cP),\s)&\mapsto Z
\end{aligned}
\end{equation*}
is a local homeomorphism of topological spaces. In particular, $\cZ_{\s}$ induces a complex structure on $\bT\Stab_{\s}\cD_{\bT}\subset\Stab(\cD_{\bT})$. 
\end{proposition}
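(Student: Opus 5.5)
Our plan is to follow the argument Ikeda--Qiu give for $q$-stability conditions: combine Bridgeland's deformation theorem (\Cref{deformation}, in its support-property form) with the uniqueness part of that theorem, which lets us check that the Gepner relations \eqref{eq:Gepner} survive small deformations. The steps are as follows.

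\emph{Injectivity and reduction.} Fix $(\sigma,\s)\in\bT\Stab_{\s}\cD_{\bT}$ with $\sigma=(Z,\cP)$. By \Cref{as:free of K} we have $K(\cD_\bT)\cong R(\bT)^n$. Any $R(\bT)$-linear map $W\colon K(\cD_\bT)\to\bC_{\s}$ is therefore determined by its values on a basis $e_1,\dots,e_n$, so $\Hom_{R}(K(\cD_\bT),\bC_\s)\cong\bC^n$ is a finite-dimensional complex vector space. We equip it with the operator norm coming from the extended norm $\|\cdot\|$, and we note that this norm was defined precisely so that $|W(T^k\alpha)|/\|T^k\alpha\|=|W(\alpha)|/\|\alpha\|$ for $\alpha$ in the base lattice. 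Local injectivity of $\cZ_\s$ follows directly from Bridgeland's theorem, since $\bT\Stab_\s\cD_\bT\subset\Stab(\cD_\bT)$ and the central-charge map on $\Stab(\cD_\bT)$ is already locally injective.

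\emph{Local surjectivity.} Let $W\in\Hom_R(K,\bC_\s)$ be close to $Z$, say $\|W-Z\|<\epsilon\,C_\sigma^{-1}$. The $\bT$-support property gives $|W(E)-Z(E)|<\epsilon|Z(E)|$ for every $\sigma$-semistable $E$. Bridgeland's deformation theorem (in the Kontsevich--Soibelman support-property form) then produces a unique stability condition $\tau=(W,\cQ)$ with $d(\sigma,\tau)<\epsilon'$ that still satisfies a support property with a comparable constant. For this step we view $W$ as an ordinary homomorphism $K(\cD_\bT)\to\bC$ and regard $\|\cdot\|$ as a norm on the possibly infinite-rank group. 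The semistable objects of $\tau$ have classes of the same type, so the support estimate transfers.

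\emph{Gepner relation.} This is the main point. Since $T_i$ commutes with the $\bC$-action and acts on $\Stab$, both $T_i(\tau)$ and $s_i\cdot\tau$ are stability conditions. Their central charges agree: $W\circ T_i^{-1}=e^{-\i\pi s_i}W$ because $W$ is $R(\bT)$-linear through $\overline{\ev}_\s$. Moreover, since $T_i\sigma=s_i\cdot\sigma$ and both operations act by isometries, each of them lies within distance $\epsilon'$ of $T_i\sigma=s_i\cdot\sigma$. Bridgeland's uniqueness (two stability conditions with the same central charge and distance $<1$ coincide) then gives $T_i(\tau)=s_i\cdot\tau$. Hence $(\tau,\s)$ is a pre-$\bT$-stability condition. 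The $\bT$-support property for $\tau$ follows from that of $\sigma$ and the estimate $|W-Z|\le\epsilon|Z|$ on semistables, with a constant such as $C_\tau=C_\sigma/(1-\epsilon)$ after the usual argument via the HN filtration in $\sigma$.

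I expect the main obstacle to be the support-property transfer in the infinite-rank setting. Bridgeland's theorem is usually phrased for lattices of finite rank, while here the norm is only $R(\bT)$-twisted. To get around this, I would first try to reduce to finite rank: by the Gepner relation, a $\tau$-semistable object $E$ can be moved by a $T$-shift to one whose class has bounded $\bT$-degree, and the estimates are invariant under this move because of the normalization of $\|\cdot\|$. Continuity and openness of $\cZ_\s$ follow as in Bridgeland, so $\cZ_\s$ is a local homeomorphism. Pulling back the linear complex structure of $\bC^n$ then gives the claimed complex structure.
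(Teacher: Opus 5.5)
Your skeleton matches the paper's: deform $\sigma$ with Bridgeland's theorem, check the multi-Gepner relation for the deformed $\tau=(W,\cQ)$, then transfer the $\bT$-support property through the $\sigma$-HN filtration. You handle the Gepner relation differently. The paper argues by hand. If a $W$-semistable $E$ had $E[T_i]$ destabilized by some $0\to A\to E[T_i]\to B\to 0$, then applying $[-T_i]$ would destabilize $E$, because $[-T_i]$ shifts all phases uniformly (using $\cP(\phi)[T_i]=\cP(\phi+\Re(s_i))$ and $R(\bT)$-linearity of $W$). You instead observe that $T_i\tau$ and $s_i\cdot\tau$ have the same central charge. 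Since both actions are isometries, each lies within $\epsilon'$ of $T_i\sigma=s_i\cdot\sigma$, so they are within $2\epsilon'<1$ of each other and coincide by Bridgeland's uniqueness lemma. This is correct and cleaner, since it never looks inside the construction of $\cQ$.

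Two things need fixing. First, the obstacle you anticipate is not real, and your workaround is false.
\begin{itemize}
\item The support transfer works in any rank. Since $\|T^k\alpha\|=|e^{\i\pi k\cdot\s}|\,\|\alpha\|$ and $W-Z$ factors through $\overline{\ev}_{\s}$, the triangle inequality gives $|(W-Z)(\beta)|\le\|W-Z\|\,\|\beta\|$ for all $\beta\in K(\cD_{\bT})$, where $\|W-Z\|$ is the operator norm on the base lattice. The HN estimate then gives $|W(F)|\ge\bigl(\cos(\pi\epsilon')C_\sigma^{-1}-\|W-Z\|\bigr)\|F\|$ for every $\tau$-semistable $F$. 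This is exactly the paper's computation.
\item The $T$-shift reduction cannot work. A $T$-shift only translates the support in $\bZ^m$ of a class; it never shrinks it. For example, if $\Re(s_1)=0$, then $E\oplus E[kT_1]$ is semistable for every semistable $E$ and every $k$, with class $(1+T_1^k)[E]$.
\end{itemize}

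Second, infinite rank genuinely matters for local finiteness, which you never address. The Kontsevich--Soibelman/Bayer support-property form of the deformation theorem is proved for a finite-rank lattice. There, support implies local finiteness because norm balls contain only finitely many classes. In $K(\cD_{\bT})$ the ball of radius $\|\alpha\|$ already contains all the classes $T^k\alpha$ with $k\cdot\Im(\s)\ge 0$, so you cannot quote that version.

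Bridgeland's original deformation theorem (his Theorem 7.1), which is what the paper uses, allows arbitrary $K(\cD)$ but assumes $\sigma$ is locally finite. That is why the paper's proof opens by showing that the $\bT$-support property implies local finiteness. Its argument: an infinite composition series in $\cP((\phi-\epsilon,\phi+\epsilon))$ has factors $S_i$ with $|Z(S_i)|\to 0$, which the bound $\|S_i\|\le C_\sigma|Z(S_i)|$ rules out once the $\|S_i\|$ are bounded below.

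You must either supply this step or state explicitly that local finiteness is part of the definition of $\bT\Stab_{\s}\cD_{\bT}$. Your appeal to $\bT\Stab_{\s}\cD_{\bT}\subset\Stab(\cD_{\bT})$ assumes it silently. With it in place, apply Bridgeland's theorem directly to $\sigma$, and the rest of your argument goes through unchanged.
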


\begin{proof}
We first demonstrate that the $\bT$-support property implies local-finiteness. Suppose $((Z,\cP),\s)\in\bT\Stab_{\s}(\cD_{\bT})$ is not locally-finite. Then for some $\phi$ and $0<\epsilon<\frac{1}{2}$ the $\cP(\phi-\epsilon,\phi+\epsilon)$ is not be of finite-length. Consequently, there exists $E\in\cP(\phi-\epsilon,\phi+\epsilon)$ with an infinite composition series of simple quotients $\{S_i\}$. We claim that $|Z(S_i)|\to0$. Otherwise,
\[
|Z(E)|=\left|\sum Z(S_i)\right|=\left|\sum e^{-\i\pi\phi}Z(S_i)\right|\ge\sum\Re(e^{-\i\pi\phi}Z(S_i))\to\infty,
\]
which is impossible. Since each $S_i$ is $\sigma$-semistable and $\Vert S_i\Vert>0$ for some constant $C>0$, the $\bT$-support property gives
\[
0<C_{\sigma}\le\frac{|Z(S_i)|}{\|S_i\|}\to0,
\]
which is a contradiction. Hence, $((Z,\cP),\s)$ is locally-finite. By the deformation property of $\Stab(\cD_{\bT})$, there exists $0<\epsilon'<\frac{1}{8}$ such that for any $W\in\Hom_{R}(K(\cD_{\bT}),\mathbb C)_{\s}$ satsfying 
\[
|W(E)-Z(E)|<\sin(\epsilon\pi)|Z(E)|
\]
 for every $\sigma$-semistable object $E$, there is a unique slicing $\cQ$ with $(W,\cQ)$ a stability condition on $\cD_{\bT}$ and $d((Z,\cP),(W,\cQ))<\epsilon$. From the construction of $\cQ$ in \cite{MR2373143}, we have $E\in\cP(\phi-\epsilon,\phi+\epsilon)$. Assume that $E[T_i]\notin\cP(\phi+\Re(s_i)-\epsilon,\phi+\Re(s)+\epsilon)$. Then there is a short exact sequence 
\[
0\to A\to E[T_i]\to B\to 0
\]
in $\cP(\phi+\Re(s_i)-\epsilon,\phi+\Re(s_i)+\epsilon)$ with $\phi_W(A)>\phi_W(E[T_i])>\phi_W(B)$. By definition, the sequence
\[
0\to A[-T_i]\to E\to B[-T_i]\to0
\]
belongs to $\cP(\phi-\epsilon,\phi+\epsilon)$ contradicting the fact that $E$ is $W$-semistable. Hence, $((W,\cQ),\s)$ is a pre-$\bT$-stability condition. Now let $F$ be semistable with respect to $(W,\cQ)$ and let $\{A_i\}$ be the Harder-Narasimhan factors of $F$ with respect to $\sigma$. Then
\begin{equation}
\begin{aligned}
\frac{|W(F)|}{\|F\|}&\ge\frac{|Z(F)|}{\|F\|}-\frac{|W(F)-Z(F)|}{\|F\|}\\
&\ge\frac{|Z(F)|}{\|F\|}-\epsilon\\
&\ge{\cos(2\epsilon)\frac{\sum|Z(A_i)|}{\sum\|A_i\|}}-\epsilon\\
&\ge C_{\sigma}^{-1}\cos(2\epsilon)-\epsilon
\end{aligned}
\end{equation}
where for simplicity we assume that $\|W-Z\|\le\epsilon$. Thus, $((W,\cQ),\s)$ satifies the $\bT$-support property and belongs to $\bT\Stab_{\s}(\cD_{\bT})$. This completes the proof.
\end{proof}

Define
\[
\bT\Stab(\cD_{\bT}):=\bigcup_{\s\in\mathbb C^m}\bT\Stab_{\s}(\cD_{\bT}).
\]

If $(\sigma,\s)\in\bT\Stab_{\s}(\cD_{\bT})$ and $(\sigma',\s')\in\bT\Stab_{\s'}(\cD_{\cT})$ with $\s\ne\s'$, then
\begin{equation*}
\begin{aligned}
d(\sigma,\sigma')&\ge\sup_{k\in\mathbb Z}\left\{|\phi^{\pm}_{\sigma}(E[kT_i])-\phi_{\sigma'}^{\pm}(E[kT_i])|\right\}\\
&=\sup_{k\in\mathbb Z}\left\{|\phi^{\pm}_{\sigma}(E)-\phi^{\pm}_{\sigma'}(E)+k(\Re(s_i)-\Re(s_i'))| \right\}\\
&=\infty
\end{aligned}
\end{equation*}
Hence, $\bT\Stab_{\s}(\cD_{\bT})$ and $\bT\Stab_{\s'}(\cD_{\bT})$ lie in different connected components of $\Stab(\cD_{\bT})$ when  $\s\ne\s'$. 

We proceed to consider reduction on $\bT$-stability conditions by passing to certain orbit categories of $\bT$-categories. Let $\cD$ be a triangulated category with an  endofunctor $\Phi:\cD\to\cD$. Recall that the \emph{orbit category} $\cD/\Phi$ has the same objects as $\cD$ and morphism spaces 
\[
\Hom_{\cD/\Phi}(E,F):=\bigoplus_{k\in\mathbb Z}\Hom_{\cD}(E,\Phi^k(F)).
\]

\begin{definition}
Let $\n=\{n_1,\dots,n_m\}\subset\mathbb Z_{>0}$ be a set of positive integers. The \emph{orbit quotient}
\[
\cD_{N}=\cD_{\bT}\sslash[T_1-n_1,\dots,T_m-n_m]
\]
is defined as the triangulated hull of the orbit category $\cD_{\bT}/[T_1-n_1,\dots,T_m-n_m]$. It is $\n$\emph{-reductive} if 
\begin{itemize}
\item the quotient functor $\pi_{\n}:\cD_{\bT}\to\cD_{\n}$ is exact;
\item the Grothendieck group of $\cD_{N}$ is free of finite rank and the induced $R(\bT)$-linear map
\[
[\pi_{\n}]:K(\cD_{\bT})\to K(\cD_{\n})
\]
is a surjection given by $t_{i}\mapsto (-1)^{n_i}$.
\end{itemize}
\end{definition}

Given a $\bT$-stability condition $(\sigma,\n)$  on $\cD_{\bT}$ with $\sigma=(Z,\cP)$, we define $\sigma_{\n}:=(Z_{\n},\cP_{\n})$ on $\cD_{\n}$ by
\begin{itemize}
\item $\cP_{\n}(\phi):=\cP(\phi)$
\item $Z_{\n}:K(\cD_{\n})\to\mathbb C$ satisfying that $Z=Z_{\n}\circ[\pi_{\n}]$.
\end{itemize}
By definition, $\sigma_{\n}$ forms a stability condition on $\cD_{\n}$ with support property. This yields an injection of complex manifolds
\begin{equation*}
\begin{aligned}
\iota_{\n}:\bT\Stab_{\n}(\cD_{\bT})&\to\Stab\cD_{\n}\\
(\sigma,\n)&\to\sigma_{\n}
\end{aligned}
\end{equation*}
Moreover, by \Cref{pro:defomation of T-stab}, one can show that the image of $\iota_{\n}$ is both open and closed in $\Stab\cD_\n$.

\subsection{$G$-NMMP for equivariant projective spaces}
\label{sec:G-NMMP for projective spaces}
In this section, we study the $G$-NMMP for projective spaces equipped with a torus $\bT$-action. Based on the framework of $\bT$-stability conditions introduced in \Cref{sec:T-stab}, we construct quasi-convergent paths satisfying \Cref{proposal:main} in the space of $\bT$-stability conditions with respect to suitable tuple of complex numbers. We begin with a brief description of the equivariant quantum cohomology of the projective spaces, following \cite{Cotti2019EquivariantQD}.

Let $m\ge2$. Consider the diagonal action of $\bT=(\mathbb C^{\ast})^m$ on $\mathbb C^m$. This induces an action of $\bT$ on $\bP^{m-1}$, the projective space parametrizing the one-dimensional subspaces $F\subset\mathbb C^m$. If $(u_1,\dots,u_m)$ is the standard basis of $\mathbb C^m$, denote by $pt_I\in\bP^{m-1}$, with $I=1,\dots,m$ the point corresponding to the coordinate line spanned by $u_I$. The points $pt_I$ are the fixed points of the $\bT$-action. 

Consider the $\bT$-equivariant cohomology algebra $H^{\ast}_{\bT}(\bP^{m-1},\mathbb C)$. Denote
\begin{enumerate}
    \item by $x$ the first equivariant Chern class of the tautological line bundle $\cO(-1)$ on $\bP^{m-1}$ with its standard $\bT$-structure;
    \item by $\y=(y_1,\dots,y_{m-1})$ the equivariant Chern roots of the quotient bundle $\cQ$ (if $F\subset\mathbb C^m$ is the line represented by $p\in\bP^{m-1}$, then the fiber $Q_p$ is  $\mathbb C^m/F$);
    \item $\z=(z_1,\dots,z_m)$ the equivariant parameters corresponding to the factors of the torus $\bT$.
\end{enumerate}
Then
\[
H^{\ast}_{\bT}(\bP^{m-1};\mathbb C)\cong\mathbb C[x,\z]\bigg/\bigg\langle\prod_{i=1}^{m}(x-z_i)\bigg\rangle.
\]
and
\begin{equation}
  \label{eq:k-group}
K^{\ast}_{\bT}(\bP^{m-1};\mathbb C)\cong\mathbb C[x^{\pm 1},\T^{\pm 1}]\bigg/\bigg\langle\prod_{i=1}^{m}(x-T_i)\bigg\rangle,
\end{equation}
where $T_1,\dots,Z_m$ are the equivariant parameters corresponding to the factors of $\bT=(\mathbb C^{\ast})^m$. The ring $H^{\ast}_{\bT}(\bP^{m-1},\mathbb C)$ is a module over $H^{\ast}_{\bT}(\mathrm{pt},\mathbb C)\cong\mathbb C[\z]$. Setting all equivariant parameter $z_i$'s to zero recovers the classical cohomology algebra
\[
H^{\ast}(\bP^{m-1},\mathbb C)\cong\mathbb C[x]/\langle x^m\rangle.
\]
For a given $\beta\in H_2(\bP^{m-1};\mathbb Z)$ and integers $g,n\ge0$, let $\overline{\cM}_{g,n}(\bP^{m-1},\beta)$ be the moduli stack of genus $g$ stable maps to $\bP^{m-1}$ of degree $\beta$ with $n$ marked points. The $\bT$-action on $\bP^{m-1}$ induces a $\bT$-action on $\overline{\cM}_{g,n}(\bP^{m-1},\beta)$. The small quantum product at $\gamma:=e^{t}x\in H^{2}_{\bT}(\bP^{m-1};\mathbb C)$ is given by
\begin{equation*}
 x\star_{\gamma}x_j=
 \begin{cases}
x_{j+1},\quad &j=0,\dots,m-2\\
e^{t}+\sum^{m}_{i=1}s_i(\z)x_{m-i},\quad &j=m-1
 \end{cases}
\end{equation*}
where $s_i(\z)$ are the elementary symmetric polynomials in $\z$. The algebraic structure induced by $\star_{\gamma}$ on $H^{\ast}_{\bT}(\bP^{m-1};\mathbb C)$ is called the
small equivariant quantum cohomology of $\bP^{m-1}$. Furthermore, the equivariant quantum product with the Euler vector field $\cE$ at $\gamma$ in the standard basis $\{x^i\}$ can be written as
\begin{equation*}
E_{\gamma}:=c_1(\bP^{m-1})\star_{\gamma}=m
\begin{pmatrix}
0&0&0&\cdots&0&e^t+(-1)^{m-1}s_{m}(\z)\\
1&0&0&\cdots&0&(-1)^{m-2}s_{m-1}(\z)\\
0&1&0&\cdots&0&(-1)^{m-3}s_{m-2}(\z)\\
&&&\cdots&&\\
0&0&0&\cdots&1&s_1(\z)
\end{pmatrix}.
\end{equation*}
The corresponding quantum differential equation is
\begin{equation}
  \label{eq:pn-qde}
  (q \frac{d}{d q} - x \star_{q}) I(q, \z) = 0
\end{equation}
which matches \eqref{eq: qde} where \(\star_{q} = \star_{(0, \ln q, 0, \cdots, 0)}\) up to a coordinate change \(q = t^{m}\).

\begin{definition}
  Define the \emph{master function} \(\Phi\) by
  \[\Phi(t, q, \z) := e^{\i\pi \sum_{i = 1}^{m} T_{i}}(e^{-\i\pi m} q)^{t} \prod_{i = 1}^{m} \Gamma(T_{i} - t)\]
  where \(\Gamma\) is the Gamma function with Taylor expansion
  \[\Gamma(1 + T) = \exp( - \gamma T + \sum_{n = 2}^{\infty}\frac{(-1)^{n}}{n} \zeta(n)T^{n})\]
  Define the \emph{weight function} \(W\) by
  \[W(t) := \prod_{j = 1}^{m - 1}(y_{j} - t)\]
  with \(y_{1}, \cdots, y_{m - 1}\)  the equivariant Chern roots of the natural quotient bundle \(Q\) on \(\bP^{m-1}\).

  The Jackson integral \(\Phi_{J}\), \(J = 1, \cdots, m\) is the \(H_{\bT}^{*}(\bP^{m - 1}, \bC)\)-valued function on \(\widetilde{\bC^{*}} \times \Omega_{\bT}\) given by
  \[\Phi_{J}(q, \z) := - \sum_{r = 0}^{\infty} \Res_{t = T_{J} + r} \Phi(t, q, \z)W(t),\]
  where \(\widetilde{\bC^{*}}\) is the universal cover of \(\bC^{*}\) and \(\Omega_{\bT}\) is the complement in \(\bC^{m}\) of the hyperplanes
  \[z_{i} - z_{j} = k, \quad i,j = 1, \cdots, m, i \neq j, k \in \bZ\]
\end{definition}

\begin{theorem}(\cite[Theorem 7.4]{Cotti2019EquivariantQD})
  The function \(\Phi_{J}(q, \z)\) with \(J = 1, \cdots, m\) are holomorphic on \(\widetilde{\bC^{*}} \times \Omega_{\bT}\) and are solutions of the quantum differential equation \eqref{eq:pn-qde}.
\end{theorem}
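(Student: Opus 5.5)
Two things have to be shown: holomorphy of each $\Phi_J$ on $\widetilde{\bC^{*}}\times\Omega_{\bT}$, and that $\Phi_J$ satisfies \eqref{eq:pn-qde}. I would treat the residue series directly and use the structure of $H^{\ast}_{\bT}(\bP^{m-1})$ as a quotient of $\mathbb C[x,\z]$. As in \cite{Cotti2019EquivariantQD}, the $T_i$ in the master function are read as the cohomological variables $x-z_i$, i.e. equivariant Chern roots. Because of this, the factor $\Gamma(T_J-t)$ has its poles at $t=T_J+r$, and these poles are not simple. Expanded at such a point, $\Gamma(T_J-t)$ has a simple pole in the scalar part, which arises from $-z_J$. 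The nilpotent parts of the $T_i$ are killed by $\prod_i(x-z_i)=0$.

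\textbf{Step 1: explicit residues and convergence.} Write $\Gamma(T_J-t)=\Gamma(1+T_J-t)/(T_J-t)$ and use $\Gamma(T_J-t-r)=\Gamma(T_J-t)/\prod_{k=1}^{r}(T_J-t-k)$ for the shift by $r$. Then each residue $\Res_{t=T_J+r}\Phi W$ is a finite combination of derivatives of
\[
q^{t}\prod_{i\neq J}\Gamma(T_i-t)\,\frac{W(t)}{r!}
\]
evaluated at $t=T_J+r$. On $\Omega_{\bT}$ the differences $z_i-z_J$ are never integers. Hence the factors $\Gamma(T_i-T_J-r)$ with $i\ne J$ have no poles, so every summand is holomorphic in $(q,\z)$ there. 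For convergence, use Stirling's bound. The ratio of consecutive terms behaves like $q/r^{m}$ up to polynomially bounded factors, since the products $\prod_{i}\Gamma(T_i-T_J-r)$ decay like $(r!)^{-m}$. So the series converges locally uniformly on $\widetilde{\bC^{*}}\times\Omega_{\bT}$ and, by Weierstrass, defines a holomorphic function. The branch of $q^{t}$ is the reason for passing to $\widetilde{\bC^{*}}$.

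\textbf{Step 2: the difference equation for the integrand.} Compute how $q\frac{d}{dq}$ acts on $\Phi W$. It multiplies the integrand by $t$, and the $t$-shift $t\mapsto t+1$ transforms $\Phi$ by
\[
\Phi(t+1)=e^{-\i\pi m}q\,\Phi(t)\prod_i (T_i-t-1)^{-1}.
\]
With this one checks
\[
\Bigl(q\tfrac{d}{dq}\Bigr)\Phi W=t\,\Phi W,
\]
and that multiplication by $x$ on $W(t)$ corresponds to $t$ modulo a total difference, $f(t+1)-f(t)$. The key identity is $\prod_i(T_i-t)=W(t)(x-t)$ up to the relation, which follows from $c(\mathcal O(-1))c(\cQ)=\prod(1+z_i)$.

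\textbf{Step 3: reduction to the quantum product.} Summing residues over the full ladder $t=T_J+r$, $r\ge0$, the total-difference term telescopes. The only boundary contribution comes from $r=0$, and it produces the quantum correction $e^{t}=q$ appearing in $x\star_\gamma x_{m-1}$. Comparing with the matrix of $E_\gamma$ then gives
\[
\Bigl(q\tfrac{d}{dq}-x\star_q\Bigr)\Phi_J=0.
\]
I expect this telescoping step, with the careful tracking of signs and the $e^{-\i\pi m}$ factors, to be the main obstacle. To check that no spurious boundary term survives, I would first verify the whole computation for $m=2$.
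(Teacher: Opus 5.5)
The paper gives no proof of its own here; it imports \cite[Theorem 7.4]{Cotti2019EquivariantQD}. Your overall skeleton is the right one: residue series plus Stirling for holomorphy, $q\frac{d}{dq}$ acting as multiplication by $t$, the shift identity for $\Phi$, the Whitney identity for $W$, and telescoping along the ladder. However, two points are wrong.

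First, the $T_i$ in the master function are the scalar equivariant parameters $z_i$, not the classes $x-z_i$. This is a notational slip in the paper: $\Phi$ is a function of $(t,q,\z)$, the residues are taken at points of the $t$-plane, and $\Omega_{\bT}$ is cut out by $z_i-z_j\in\mathbb Z$. With $T_i=z_i$ the poles at $t=z_J+r$ are simple on $\Omega_{\bT}$. No derivatives or ``nilpotent parts'' occur, and
\[
\mathrm{Res}_{t=z_J+r}\Phi W=\frac{(-1)^{r+1}}{r!}\,e^{\i\pi\sum_a z_a}\,(e^{-\i\pi m}q)^{z_J+r}\prod_{a\neq J}\Gamma(z_a-z_J-r)\,W(z_J+r).
\]
So all cohomological dependence sits in $W$, and your Stirling estimate gives holomorphy. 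Under your stated reading, the key identity of Step 2 is false. At $t=0$ one side is $\prod_i(x-z_i)=0$, while the other is $x\,c_{m-1}(\cQ)=z_1\cdots z_m$. The correct identity is $x\cup W(t)=t\,W(t)+\prod_a(z_a-t)$. Combined with $\Phi(t)\prod_a(z_a-t)=e^{-\i\pi m}q\,\Phi(t-1)$, it gives $x\cup(\Phi W)=t\,\Phi W+e^{-\i\pi m}q\,\Phi(t-1)$.

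Second, and this is the genuine gap, Step 3 puts the quantum correction in the wrong place. The $r=0$ boundary term is zero, because
\[
\sum_{r\ge0}\mathrm{Res}_{t=z_J+r}\Phi(t-1)=\mathrm{Res}_{t=z_J-1}\Phi(t)+\sum_{r\ge0}\mathrm{Res}_{t=z_J+r}\Phi(t),
\]
and $\Phi$ is holomorphic at $t=z_J-1$ on $\Omega_{\bT}$. With the cup product you are therefore left with $x\cup\Phi_J-q\frac{d}{dq}\Phi_J=-e^{-\i\pi m}q\sum_{r\ge0}\mathrm{Res}_{t=z_J+r}\Phi(t)$, a multiple of the unit class. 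This is neither a total difference nor a boundary term, so the telescoping you describe cannot produce $q$; your $m=2$ check would expose this.

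The missing ingredient is the quantum product applied to the weight function itself. Write $W(t)=\sum_{k=0}^{m-1}(-t)^{m-1-k}c_k(\cQ)$ with $c_k(\cQ)=\sum_{j\le k}s_{k-j}(\z)(-x)^j$. By the formula for $x\star_\gamma x_j$, $x\star_q$ agrees with $x\cup$ on $c_k(\cQ)$ for $k\le m-2$. On the top class, $x\star_q c_{m-1}(\cQ)=x\cup c_{m-1}(\cQ)+(-1)^{m-1}q$. Since $(-1)^{m-1}=-e^{-\i\pi m}$, this yields
\[
x\star_q(\Phi W)(t)-t\,\Phi(t)W(t)=e^{-\i\pi m}q\bigl(\Phi(t-1)-\Phi(t)\bigr).
\]
This is an honest total difference. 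Its ladder sum is $e^{-\i\pi m}q\,\mathrm{Res}_{t=z_J-1}\Phi(t)=0$, with no boundary contribution at all; both ladder sums converge absolutely by your estimate. Exchanging $q\frac{d}{dq}$ and $x\star_q$ with the locally uniformly convergent series then gives \eqref{eq:pn-qde}.

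In short, the factor $e^{-\i\pi m}$ in $\Phi$ is there so that the classical term from the Whitney relation cancels the quantum correction on $c_{m-1}(\cQ)$. The $q$ does not come from an $r=0$ residue.
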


Let
\[\check{T} := \exp(2 \i \pi t), \quad \check{Z}_{J} := \exp(2 \i \pi z_{J}), \quad J=1, \cdots, m.\]
For a Laurent polynomial \(Q(X, \mathbf{Z}) \in \bC[X^{\pm 1}, \mathbf{Z}^{\pm 1}]\), we set
\[\Phi_{Q}(q, \z) := \sum_{J = 1}^{m}Q(\check{Z}_{J}, \check{\mathbf{Z}}) \Phi_{J}(q, \z).\]
In particular, each \(\Phi_{Q}\) solves \ref{eq:pn-qde}. If \(Q(X, Z) = X^{n}\), we denote
\[\Phi^{n} := \Phi_{X^{n}} = \sum_{J = 1}^{m} \check{Z}_{J}^{m} \Phi_{J}.\]

\begin{theorem}(\cite[Theorem 7.11]{Cotti2019EquivariantQD})
  There is a well-defined morphism from \(K_{0}^{\bT}(\bP^{m - 1})_{\bC}\) to the space of solutions of \eqref{eq:pn-qde} given by composing \eqref{eq:k-group} with
  \[Q \to \Phi_{Q}.\]
\end{theorem}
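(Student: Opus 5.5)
The plan is to check directly the three properties hidden in the phrase ``well-defined morphism''. First, $\Phi_Q$ is a solution of \eqref{eq:pn-qde}. Second, $Q\mapsto\Phi_Q$ is $\bC$-linear. Third, $\Phi_Q$ depends only on the class of $Q$ in the quotient \eqref{eq:k-group}. Composing with the isomorphism \eqref{eq:k-group}, which sends $x\mapsto X$ and $T_i\mapsto Z_i$, then gives the asserted morphism from $K_0^{\bT}(\bP^{m-1})_{\bC}$.

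For the first two points I would use the preceding theorem (\cite[Theorem 7.4]{Cotti2019EquivariantQD}): each $\Phi_J$ is holomorphic on $\widetilde{\bC^*}\times\Omega_{\bT}$ and solves \eqref{eq:pn-qde}.

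\begin{itemize}
\item The coefficient $Q(\check Z_J,\check{\mathbf Z})$ is a Laurent polynomial in the exponentials $e^{2\i\pi z_i}$. These never vanish, so the coefficient is an entire, nowhere singular function of $\z$, and it does not depend on $q$.
\item The operator $q\frac{d}{dq}-x\star_q$ is linear over functions of $\z$ alone: the quantum product is $\bC[\z]$-bilinear, and $q\frac{d}{dq}$ kills $q$-independent coefficients.
\item Hence $\Phi_Q=\sum_J Q(\check Z_J,\check{\mathbf Z})\Phi_J$ is again a holomorphic solution on $\widetilde{\bC^*}\times\Omega_{\bT}$.
\item Linearity in $Q$ is immediate, since evaluation at $(\check Z_J,\check{\mathbf Z})$ is a ring homomorphism $\bC[X^{\pm1},\mathbf Z^{\pm1}]\to\cO(\Omega_{\bT})$.
\end{itemize}

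For well-definedness, it suffices by linearity to show that $\Phi_Q=0$ whenever $Q$ lies in the ideal generated by $\prod_{i=1}^m(X-Z_i)$ in the Laurent ring. Write such a $Q$ as $Q=R\cdot\prod_{i}(X-Z_i)$ with $R$ a Laurent polynomial. For each fixed $J$, the evaluation homomorphism sends this product to
\[
R(\check Z_J,\check{\mathbf Z})\prod_{i=1}^m(\check Z_J-\check Z_i),
\]
which contains the factor $\check Z_J-\check Z_J=0$. Inverting $X$ and the $Z_i$ causes no problem, because the evaluation point has all coordinates nonzero and the homomorphism extends to the Laurent ring. So every coefficient $Q(\check Z_J,\check{\mathbf Z})$ vanishes identically, and $\Phi_Q=0$ term by term.

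The only step needing care is the bookkeeping: confirming that the generator of the relation in \eqref{eq:k-group} is exactly $\prod_i(x-T_i)$ under the identification $x\leftrightarrow X$, $T_i\leftrightarrow Z_i$. Geometrically, $X\mapsto\check Z_J$ is the character by which the torus acts on $\cO(-1)$ at the fixed point $pt_J$, and restriction to $pt_J$ kills exactly this relation. Once this dictionary is fixed, the vanishing argument above is formal.
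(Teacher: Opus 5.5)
Your argument is correct. Each $\Phi_Q$ is a combination of the solutions $\Phi_J$ with $q$-independent entire coefficients, and any multiple of $\prod_{i}(X-Z_i)$ is sent to zero because the $i=J$ factor vanishes at $(\check Z_J,\check{\mathbf Z})$, which is all that well-definedness requires. The paper gives no proof of its own (it imports the statement from Cotti--Varchenko), and with $\Phi_Q$ defined as in the paper your direct check is the natural verification; the same evaluation-homomorphism argument also shows the map is linear over $\bC[\mathbf Z^{\pm1}]$, with $Z_i$ acting on solutions by multiplication by $\check Z_i$.
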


We denote the morphism by \(\theta\). Under \(\theta\), the Beilinson basis \(\langle \cO, \cdots, \cO(m - 1) \rangle\) maps to
\[(\Phi^{0}, \cdots, \Phi^{m-1}).\]

Now we examine the asymptotic behavior of \(\Phi^{n}\). Let \((r, \phi)\) be the coordinate of the universal cover \(\widetilde{\bC^{*}}\) of \(\bC^{*}\). Set
\[q = s^{m}, s = re^{- 2 \i\pi \phi}, \quad\text{ for } r > 0, \phi \in \bR.\]
\begin{lemma}
  \label{lem:15}
  In a sector where \(n \in \bZ\) and \(\phi \in \bR\) satisfy
  \[\frac{n}{m} - 1 < \phi < \frac{n}{m},\]
  we have asymptotic expansion as \(s \to \infty\)
  \[\Phi^{n}(s^{m}, \z) = \frac{(2 \pi)^{\frac{m - 1}{2}}}{\sqrt{m}} e^{\i\pi \sum_{i = 1}^{m} z_{i}}(e^{-\i\pi} \zeta_{m}^{n}s)^{\sum_{i = 1}^{m}z_{i} + \frac{m - 1}{2}} e^{m s \zeta_{m}^{n}} (1 + O(\frac{1}{s})),\]
  where
  \[\zeta_{m} = \exp(\frac{2 \i \pi}{m}),\]
  and
  \[\arg(e^{-\i\pi} \zeta_{m}^{n}s) = 2 \pi \frac{n}{m} - \pi - 2 \pi \phi.\]
\end{lemma}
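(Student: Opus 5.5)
Following Cotti's approach to the non-equivariant case, we would rewrite the sum of residues defining \(\Phi^{n}\) as a single Mellin--Barnes contour integral and then evaluate it by steepest descent.

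\emph{Step 1: contour integral.} By definition,
\[
\Phi^{n}=-\sum_{J}\check{Z}_J^{n}\sum_{r\ge0}\Res_{t=z_J+r}\Phi(t,q,\z)W(t).
\]
For \(\z\in\Omega_{\bT}\) all the poles of \(\prod_i\Gamma(z_i-t)\) are simple. On the residue at \(t=z_J+r\) the factor \(e^{2\i\pi n t}\) equals \(\check{Z}_J^{n}\). Hence
\[
\Phi^{n}(q,\z)=\frac{1}{2\pi\i}\int_{\gamma}e^{2\i\pi n t}\,\Phi(t,q,\z)W(t)\,dt,
\]
where \(\gamma\) is a Hankel-type contour that encircles all the poles \(z_J+\mathbb Z_{\ge0}\) and opens to \(+\infty\). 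The sign is fixed by the orientation of \(\gamma\).

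Next we apply the reflection formula
\[
\Gamma(z_i-t)=\frac{\pi}{\sin\pi(z_i-t)\,\Gamma(1-z_i+t)}.
\]
This turns the integrand into a ratio whose denominator is the product of sines. We expand \(1/\prod_i\sin\pi(z_i-t)\) as a geometric series in \(e^{\pm 2\i\pi t}\); the expansion converges on the relevant half-planes. Each term shifts the effective exponent \(n\). The condition \(\frac nm-1<\phi<\frac nm\) is exactly what makes the term with exponent \(n\) dominant, and it also lets us deform \(\gamma\) to a vertical line \(\Re t=c\ll0\). We then substitute \(t=-u\) to obtain an integral of Barnes type,
\[
\int \prod_i\Gamma(u+z_i)\,\bigl(e^{\i\pi(\cdot)}s^{m}\bigr)^{-u}\,W(-u)\,du .
\]

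\emph{Step 2: saddle point.} We apply Stirling's formula,
\[
\log\Gamma(u+z)=(u+z-\tfrac12)\log u-u+\tfrac12\log 2\pi+O(1/u).
\]
The phase then becomes
\[
m\bigl(u\log u-u\bigr)-mu\log(\text{const}\cdot s)+\bigl(\textstyle\sum z_i-\frac m2\bigr)\log u .
\]
Its saddle point is at \(u\approx e^{-\i\pi}\zeta_m^{n}s\), on the branch singled out by the sector. The Gaussian integral contributes the factor \((2\pi)^{\frac{m-1}{2}}/\sqrt m\) together with \(u^{1/2}\), and the exponential factor evaluates to \(e^{ms\zeta_m^{n}}\). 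Combining the powers of \(u\) gives \(u^{\sum z_i-\frac m2+\frac12}\cdot W\)-correction. Since \(W(-u)\) is a polynomial of degree \(m-1\) in \(u\) with cohomology-valued coefficients, its leading behaviour contributes to the exponent \(\frac{m-1}{2}\) in the stated formula, and the lower-order terms contribute to the error \(O(1/s)\). The prefactor \(e^{\i\pi\sum z_i}\) is carried through unchanged from the master function. Finally, we check \(\arg(e^{-\i\pi}\zeta_m^{n}s)=2\pi\frac nm-\pi-2\pi\phi\), which follows from \(s=re^{-2\i\pi\phi}\).

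\emph{Main obstacle.} The hardest part will be justifying the contour deformation and the uniformity of the saddle-point estimate. Concretely, three things must be shown:
\begin{itemize}
\item the contributions of the other saddles \(\zeta_m^{k}\), \(k\neq n\), are exponentially subdominant exactly in the open sector \(\frac nm-1<\phi<\frac nm\);
\item the tails of the vertical contour are negligible;
\item the bounds hold uniformly for \(\z\) in compact subsets of \(\Omega_{\bT}\).
\end{itemize}
We would handle these by comparing real parts, \(\Re(ms\zeta_m^{k})<\Re(ms\zeta_m^{n})\) in the sector, and by using Stirling's formula with uniform error bounds on sectors avoiding the poles. Alternatively, one could cite the analogous computation in \cite{Cotti2019EquivariantQD}.
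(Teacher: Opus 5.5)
The paper gives no proof of this lemma; it is imported from \cite{Cotti2019EquivariantQD}, so your closing fallback (citing that computation) is what the paper actually does. Your own route (residues $\to$ one contour integral $\to$ steepest descent) is the standard one, and Step~1 is correct. The constants in Step~2 also check out. With $w:=e^{-\i\pi}\zeta_m^n s$ one has $e^{2\pi\i nt}(e^{-\i\pi m}q)^t=w^{mt}$. The phase $m(u\log u-u)-mu\log w$ has its critical point at $u=w$, with value $-mw=ms\zeta_m^n$. Stirling, the Gaussian factor $\sqrt{w/(2\pi m)}$ and $W(-w)=w^{m-1}(1+O(1/w))$ then combine to the stated prefactor and exponent $\sum_i z_i+\frac{m-1}{2}$.

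The gap is in the analytic justification: both mechanisms you propose fail.
\begin{enumerate}
\item \emph{Vertical line.} On a line $u=c+\i y$ the integrand $\prod_i\Gamma(u+z_i)\,w^{-mu}W(-u)$ has modulus of order $|y|^{N}e^{m(y\arg w-\pi|y|/2)}$. So the Barnes integral converges only for $|\arg w|<\pi/2$, i.e.\ $\frac nm-\frac34<\phi<\frac nm-\frac14$. But the stated sector is exactly $|\arg w|<\pi$, so the sector condition does not let you pass to a vertical line on its outer halves.
\item \emph{Dominance.} Your comparison $\Re(ms\zeta_m^k)<\Re(ms\zeta_m^n)$ is backwards. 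At the centre $\phi=\frac nm-\frac12$ one has $s\zeta_m^n=-r$, so $e^{ms\zeta_m^n}$ is the \emph{most recessive} of the $m$ exponentials; the inequality holds only for $|\arg w|>\pi-\pi/m$. Expanding $1/\prod_i\sin\pi(z_i-t)$ into exponentials and keeping the ``dominant'' term is a dominance argument, and it cannot prove the lemma. Over most of the sector any admixture of another exponential would swamp the claimed leading term, so what must be shown is that none occurs.
\end{enumerate}

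The repair stays inside your framework but drops both the reflection formula and the vertical line. Keep the Hankel loop of Step~1. After $u=-t$ it encircles the poles $-z_J-\bZ_{\ge0}$ and opens to $-\infty$. It converges for every $w$, because $\prod_i\Gamma(u+z_i)$ decays superexponentially as $\Re(u)\to-\infty$ away from the poles.

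On the plane slit along the negative real axis (the poles lie in a horizontal half-strip around it), the phase has the single critical point $u=w$. The condition $|\arg w|<\pi$ is precisely the condition that this point lies off the slit. The loop deforms onto the steepest-descent path through $u=w$ without crossing poles, and the arcs at infinity in the left half-plane are negligible. Both ends of that path also run off into the left half-plane. So exactly one saddle contributes and there are no competing exponentials to estimate. Uniform Stirling and reflection bounds for $|\arg w|\le\pi-\delta$ then give the cohomology-valued factor $1+O(1/s)$.

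Equivalently, expand $W(-u)$ in powers of $u$ and absorb them into shifted Gamma factors. Then $\Phi^n$ is $e^{\i\pi\sum_i z_i}$ times a finite combination, with cohomology-valued coefficients, of Meijer functions $G^{m,0}_{0,m}(w^m)$. Their classical large-argument asymptotics hold on a sector strictly containing $|\arg w|<\pi$.
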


Taking logarithms gives
\begin{equation}
  \label{eq:asymp}
  \log \Phi^{n}(s^{m}, \z) = m s \zeta^{n}_{m} + O(\log s).
\end{equation}

For any smooth variety $X$, the equivariant derived category $D^b_{\bT}(X)$ is smooth and proper since $\bT$-action is smooth and proper.

The action of $\bT$ on $\mathbb C^m$ induces a natural $\bT$-structure on the structural sheaf $\cO_{\bP^{m-1}}$ and on the tautological line bundle $\cO(-1)$ on $\bP^{m-1}$. Any vector bundle obtained from $\cO_{\bP^{m-1}}$ and $\cO(-1)$ through tensorial operations inherits a $\bT$-structure. Recall that the Beilinson exceptional collection, as a full exceptional collection on derived category of $D^b(\bP^{m - 1})$ is 
\[
\{\cO,\cO(1),\dots,\cO(m-1)\}.
\]
with its natural $\bT$-structure, this collection is a full equivariant exceptional collection in $D^b_{\bT}(\bP^{m-1})$.

\begin{lemma}
  Via the isomorphism \(\theta\), the exceptional basis
  \[\left([\cO(-k - m + 1)], \cdots, [\cO(-k)]\right)\]
  is identified with
  \[Q_{k} := (\Phi^{k + m - 1}, \cdots, \Phi^{k}).\]
\end{lemma}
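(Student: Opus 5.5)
The identification comes from unwinding the definition of $\theta$: it is the composite of the presentation \eqref{eq:k-group} with $Q\mapsto\Phi_Q$. The plan is to compute, for each $j$, which Laurent polynomial in $\bC[X^{\pm1},\mathbf Z^{\pm1}]$ represents the class $[\cO(-j)]$ under \eqref{eq:k-group}. Then read off $\Phi_Q$ from the definition $\Phi_{X^n}=\Phi^n$.

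In \eqref{eq:k-group} the variable $x$ is the class of the tautological bundle $\cO(-1)$ with its natural $\bT$-structure. Since the $\bT$-structures are compatible with tensor products, $[\cO(-j)]=x^{j}$ for every $j\in\bZ$; negative $j$ are allowed because $x$ is invertible in $\bC[x^{\pm1},\T^{\pm1}]$. Hence $[\cO(-j)]$ corresponds to the monomial $Q(X,\mathbf Z)=X^{j}$, and by definition $\theta([\cO(-j)])=\Phi_{X^j}=\sum_J \check Z_J^{\,j}\Phi_J=\Phi^{j}$. (The displayed formula for $\Phi^n$ writes $\check Z_J^m$; I read this as the intended $\check Z_J^n$.)

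Applying this with $j=k+m-1,\dots,k$ in order gives
\[
\bigl(\theta[\cO(-k-m+1)],\dots,\theta[\cO(-k)]\bigr)=(\Phi^{k+m-1},\dots,\Phi^{k})=Q_k .
\]
It remains to check that this tuple is the image of an exceptional basis. The tuple $\bigl([\cO(-k-m+1)],\dots,[\cO(-k)]\bigr)$ is the twist by $\cO(-k-m+1)$ of the Beilinson collection $\{\cO,\dots,\cO(m-1)\}$, and twisting by an equivariant line bundle is an autoequivalence of $D^b_{\bT}(\bP^{m-1})$. So it is again a full $\bT$-exceptional collection, hence an $R(\bT)$-basis of $K^{\bT}_0(\bP^{m-1})$. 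Because $\theta$ is the well-defined morphism of \cite[Theorem 7.11]{Cotti2019EquivariantQD}, the tuple $Q_k$ is its image.

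The main obstacle is a sign and orientation convention: whether $x$ corresponds to $\cO(-1)$ or $\cO(1)$, and whether the equivariant weights enter as $T_i$ or $T_i^{-1}$. These conventions must match those of \cite{Cotti2019EquivariantQD} so that $\check Z_J$ is the value of $x$ at the fixed point $pt_J$. I would check this by localization. At $pt_J$, the fiber of $\cO(-1)$ is the line $\bC u_J$, of character $T_J$, so the localized class of $x$ at $pt_J$ is $T_J$. Under $T_J\mapsto\check Z_J=e^{2\i\pi z_J}$ this matches the substitution $Q(\check Z_J,\check{\mathbf Z})$ in the definition of $\Phi_Q$. This confirms $[\cO(-j)]\mapsto\Phi^{j}$ rather than $\Phi^{-j}$.
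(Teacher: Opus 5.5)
Your proof is correct. The paper gives no proof of this lemma; it is quoted from Cotti--Varchenko. Your unwinding of the definitions is exactly the intended argument: in the presentation of $K^{\bT}_0(\bP^{m-1})$ one has $x=[\cO(-1)]$, so $[\cO(-j)]=x^{j}\mapsto\Phi_{X^{j}}=\sum_J\check Z_J^{\,j}\Phi_J=\Phi^{j}$. Your reading of $\check Z_J^{m}$ as $\check Z_J^{n}$ is also right.

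Your attention to the sign convention is well placed. When the characters of the coordinate lines are written $T_i$, the relation $\prod_i(x-T_i)=0$ is satisfied by $[\cO(-1)]$, which restricts to $T_J$ at $pt_J$. It is not satisfied by $[\cO(1)]$, which restricts to $T_J^{-1}$. With this convention, the paper's earlier sentence is inconsistent with the lemma. That sentence says the Beilinson basis $(\cO,\dots,\cO(m-1))$ maps to $(\Phi^{0},\dots,\Phi^{m-1})$; it should read $(\Phi^{0},\Phi^{-1},\dots,\Phi^{1-m})$.
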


We know construct a quasi-convergent path in the stability space. Let \(\mathbf{1}^{(w_{1}, \cdots, w_{n})}\) denote the one-dimensional \(\bT\)-representation of weight \((w_{1}, \cdots, w_{m}) \in \bZ^{m}\). Explicitely,
\[(z_{1}, \cdots, z_{m}) \cdot z = z_{1}^{w_{1}} \cdots z_{m}^{w_{m}} z.\]
For \(\s = (s_{1}, \cdots, s_{m}) \in \bC^{m}\) and \(\phi \in \bR\), define
\[\cA_{\s, \phi} = \left\langle \mathbf{1}^{w_{1}, \cdots, w_{m}}[\lfloor {\sum_{i = 1}^{m}w_{i} \Re s_{i} + \phi} \rfloor]_{(w_{1}, \cdots, w_{m}) \in \bZ^{m}} \right\rangle_{\rm ext}.\]
\begin{lemma}
  \(\cA_{\s, \phi}\) is a heart of a bounded t-structure in \(D^{b}(\Rep(\bT))\).
\end{lemma}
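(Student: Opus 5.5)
The plan is to use the structure of $\Rep(\bT)$ directly. The category $\Rep(\bT)$ is semisimple, with simple objects the characters $\mathbf{1}^{w}$ for $w=(w_1,\dots,w_m)\in\bZ^m$, and there are no nonzero morphisms between distinct characters. Consequently every object of $D^{b}(\Rep(\bT))$ decomposes canonically as a finite direct sum
\[
E\cong\bigoplus_{w\in\bZ^m}\bigoplus_{k\in\bZ}\mathbf{1}^{w}[k]^{\oplus a_{w,k}},
\]
and $\Hom^{j}(\mathbf{1}^{w}[k],\mathbf{1}^{w'}[k'])=0$ unless $w=w'$ and $j=k'-k$. Equivalently, $D^{b}(\Rep(\bT))\simeq\bigoplus_{w\in\bZ^m}D^{b}(\mathrm{Vect}_{\bC})$, an orthogonal decomposition into blocks indexed by weights. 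Under this identification, $\cA_{\s,\phi}$ is the product over $w$ of the shifted standard hearts $\mathrm{Vect}_{\bC}[n_w]$, where
\[
n_w:=\lfloor \textstyle\sum_i w_i\Re s_i+\phi\rfloor .
\]

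First, I will check that the extension closure adds nothing new. An extension $A\to E\to B$ with $A,B$ finite sums of $\mathbf{1}^{w}[n_w]$ splits blockwise, because within the block $w$ it lies in $\Hom^{1}(\mathbf{1}^{w}[n_w],\mathbf{1}^{w}[n_w])=0$, and across different blocks all Homs vanish. Hence $\cA_{\s,\phi}$ consists exactly of the finite direct sums $\bigoplus_w (\mathbf{1}^{w})^{\oplus a_w}[n_w]$. In particular it is equivalent to the category of finitely supported $\bZ^m$-graded finite-dimensional vector spaces, so it is abelian.

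Next, I will verify the two defining properties of the heart of a bounded t-structure, using Bridgeland's criterion that a full additive subcategory $\cA$ is such a heart iff
\begin{enumerate}
\item[(i)] $\Hom^{j}(A,B)=0$ for $j<0$ and $A,B\in\cA$;
\item[(ii)] every object admits a finite filtration whose factors lie in $\cA[k_1],\dots,\cA[k_r]$ with $k_1>\dots>k_r$.
\end{enumerate}
For (i), negative Homs vanish blockwise since $\Hom^{j}(\mathbf{1}^{w}[n_w],\mathbf{1}^{w}[n_w])=0$ for $j\neq 0$, and across blocks all Homs vanish. For (ii), given $E$ decomposed as above, I rewrite each summand as $\mathbf{1}^{w}[k]=(\mathbf{1}^{w}[n_w])[k-n_w]\in\cA_{\s,\phi}[k-n_w]$. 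I then group the summands by the value $k-n_w$; only finitely many values occur since $E$ is a finite sum. Ordering these groups by decreasing $k-n_w$ gives a split filtration, and hence a split HN-type filtration as required.

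No step is really an obstacle, since semisimplicity makes everything split. The only point needing care is that the shifts $n_w$ are defined independently for each $w$: the floor function causes no compatibility issue precisely because different weights are completely orthogonal. Boundedness holds because each object involves only finitely many weights and shifts. I would also remark, for later use, that $\cA_{\s,\phi}[T_i]$ is $\cA_{\s,\phi+\Re s_i}$ up to the floor, which is the reason for this particular choice of shifts.
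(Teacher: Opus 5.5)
Your argument is correct and is essentially the paper's: split every object of $D^{b}(\Rep(\bT))$ into shifted characters by semisimplicity and regroup the summands to get the filtration required by Bridgeland's criterion (Lemma 3.2). You also explicitly check that negative Homs vanish and that the extension closure consists only of direct sums, details the paper leaves implicit.

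Two harmless slips: $\Hom^{j}(\mathbf{1}^{w}[k],\mathbf{1}^{w'}[k'])\neq 0$ forces $j=k-k'$ rather than $k'-k$, which does not matter since you only use $k=k'$. In your closing side remark, reindexing $w\mapsto w+e_i$ shows that tensoring with $T_i$ carries $\cA_{\s,\phi}$ exactly onto $\cA_{\s,\phi-\Re s_i}$, not $\cA_{\s,\phi+\Re s_i}$.
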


\begin{proof}
  Since \(\Rep(\bT)\) is completely decomposable, every object \(\cE \in D^{b}(\Rep(\bT))\) can be written as
  \[\cE = \bigoplus_{i \in I} (\mathbf{1}^{(w_{1, i}, \cdots, w_{m, i})}[k_{i}])^{\oplus n_{i}}\]
  with a finite index set \(I\). Reordering the summations yields a sequence of morphisms which satisfies the requirements in \cite[Lemma 3.2]{Bridgeland2006SpacesOS}. Therefore, $\cA_{\s,\phi}$ is indeed a heart of a bounded t-structure.
\end{proof}

Since
\[K(D^{b}(\Rep(\bT)))_{\bC} = \bC[T_{1}^{\pm 1}, \cdots, T_{m}^{\pm 1}],\]
We consider the central charge
\[
\begin{aligned}
  Z_{\s} = \overline{\ev}_{\s}:  \bC[T_{1}^{\pm 1}, \cdots, T_{m}^{\pm 1}] & \to \bC\\
  T_{i}^{n} &\to e^{\i\pi n s_{i}}.
\end{aligned}
\]
For \(0 < M \in \bR\) and \(\phi \in \bR\), we write \(Z_{\s, M, \phi} = M e^{\i\pi\phi}Z_{\s}\).

\begin{lemma}
  \label{lem:14}
  Let \(\cD = D^{b}(\Rep(\bT))\). Then \(\sigma_{0} = (Z_{0} := Z_{\s, M, \phi}, \cA_{\s, \phi})\) is a \(\s\)-stability condition on \(\cD\).
\end{lemma}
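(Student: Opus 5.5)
We have to prove that $\sigma_0=(Z_{\s,M,\phi},\cA_{\s,\phi})$ is a $\bT$-stability condition on $D^b(\Rep(\bT))$ with respect to $\s$. The plan has four parts: build $\sigma_0$ from its heart and central charge, check the multi-Gepner condition \eqref{eq:Gepner}, verify the $\bT$-support property, and finally deduce local-finiteness.

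\textbf{Step 1: the stability function.} By the previous lemma, $\cA_{\s,\phi}$ is the heart of a bounded t-structure. By \cite[Proposition 5.3]{MR2373143}, it suffices to show that $Z_0$ is a stability function on $\cA_{\s,\phi}$ with the Harder--Narasimhan property. Every object of the heart is a finite direct sum of the generators
\[
S_w:=\mathbf 1^{w}[\lfloor \langle w,\Re\s\rangle+\phi\rfloor], \qquad w\in\bZ^m,
\]
because $\Rep(\bT)$ is semisimple and all higher Exts vanish. So the heart is a semisimple abelian category with simple objects $S_w$. We compute
\[
Z_0(S_w)=(-1)^{\lfloor\langle w,\Re\s\rangle+\phi\rfloor}Me^{\i\pi\phi}e^{\i\pi\langle w,\s\rangle}.
\]
This has modulus $Me^{-\pi\langle w,\Im\s\rangle}>0$ and argument $\pi\bigl(\phi+\langle w,\Re\s\rangle-\lfloor\langle w,\Re\s\rangle+\phi\rfloor\bigr)$ modulo $2\pi$. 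The quantity in parentheses is the fractional part of $\langle w,\Re\s\rangle+\phi$, so it lies in $[0,1)$.

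\textbf{Main obstacle: phase $0$.} A phase-$0$ problem arises when $\langle w,\Re\s\rangle+\phi\in\bZ$, since then the phase must be read as $1$ rather than $0$. I would fix this by noting that the phase of $S_w$ should be $\langle w,\Re\s\rangle+\phi$ exactly, shifted into $(0,1]$. Concretely, I would either use the convention $\lceil\cdot\rceil-1$ in the heart, or simply declare $\cP(\psi)$ to be the additive closure of the $S_w$ with exact phase $\psi$. Once the phases are assigned, the Harder--Narasimhan property is automatic: each object is a finite direct sum of semistable simples, and Hom-vanishing between different phases holds by semisimplicity.

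\textbf{Step 2: the multi-Gepner condition.} Tensoring with $T_i$ sends $\mathbf 1^w$ to $\mathbf 1^{w+e_i}$, which raises the exact phase by $\Re s_i$. Hence $\cP(\psi+\Re s_i)=\cP(\psi)\otimes T_i$. The central charge satisfies $Z_0(T_iE)=e^{\i\pi s_i}Z_0(E)$, and $Z_0$ is $R(\bT)$-linear, factoring through $\overline{\ev}_\s$ by construction. These are exactly conditions (1) and (2) of the equivalent definition of a pre-$\bT$-stability condition.

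\textbf{Step 3: the $\bT$-support property.} Here $\Gamma=\bZ$ is generated by $\mathbf 1^0$, and the norm defined before \Cref{def:support property for T-stab} gives $\|\pm[\mathbf 1^w]\|=|e^{\i\pi\langle w,\s\rangle}|\,\|\mathbf 1^0\|$.

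The semistable objects of phase $\psi$ are sums $\bigoplus S_{w_j}^{n_j}$ with all $w_j$ having the same exact phase. Their central charges therefore all point in the same direction, so
\[
|Z_0(E)|=\sum_j n_jMe^{-\pi\langle w_j,\Im\s\rangle}.
\]
The norm $\|E\|$ is bounded by the same sum up to the factor $\|\mathbf 1^0\|/M$. So $C_\sigma=\|\mathbf 1^0\|/M$ works.

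\textbf{Step 4: local-finiteness.} Local-finiteness then follows, either from the argument in \Cref{pro:defomation of T-stab} or directly from semisimplicity with finite-length sums.
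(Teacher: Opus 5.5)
Your strategy matches the paper's: the heart is semisimple, so Harder--Narasimhan filtrations are trivial, and the rest is a verification. The paper's proof stops at ``the remaining conditions are clearly satisfied.'' Your Steps 1, 3 and 4 are fine, and the phase-$0$ issue you flag is real. The genuine gap is Step 2. The multi-Gepner condition fails for $\cA_{\s,\phi}$ as defined, and the phase you give $\mathbf 1^w$ there is not the one your own Step 1 produces.

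Put $\psi_w:=\langle w,\Re\s\rangle+\phi$. You showed that the generator $S_w=\mathbf 1^w[\lfloor\psi_w\rfloor]$ has phase $\psi_w-\lfloor\psi_w\rfloor$ (for $\psi_w\notin\bZ$), because $(-1)^{\lfloor\psi_w\rfloor}=e^{-\i\pi\lfloor\psi_w\rfloor}$. Hence $\mathbf 1^w=S_w[-\lfloor\psi_w\rfloor]$ has phase $\psi_w-2\lfloor\psi_w\rfloor$, not $\psi_w$. Passing from $\mathbf 1^w$ to $T_i\mathbf 1^w=\mathbf 1^{w+e_i}$ then changes the phase by $\Re s_i+2(\lfloor\psi_w\rfloor-\lfloor\psi_w+\Re s_i\rfloor)$. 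This depends on $w$, and it differs from $\Re s_i$ for suitable $w$ as soon as $\Re s_i\neq 0$.

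Concretely, take $m=1$, $s_1=\tfrac12$, $\phi=\tfrac14$. Both $\mathbf 1^0$ and $\mathbf 1^2[1]$ are generators of $\cA_{\s,\phi}$, and both have central charge $Me^{\i\pi/4}$, hence phase $\tfrac14$. So $T_1^2\mathbf 1^0=\mathbf 1^2$ has phase $-\tfrac34$, whereas the Gepner condition demands $T_1^2\cP(\tfrac14)=\cP(\tfrac54)$.

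So the lemma only holds once the sign of the shift in the definition of $\cA_{\s,\phi}$ is reversed. Take the heart generated by $\mathbf 1^w[-\lfloor\psi_w\rfloor]$, or by $\mathbf 1^w[1-\lceil\psi_w\rceil]$ if you also want to cure the phase-$0$ problem in the $(0,1]$ convention. Equivalently, set $\cP(\psi)=\mathrm{add}\{\mathbf 1^w[k]:\psi_w+k=\psi\}$ and use $\cP((0,1])$ as the heart.

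Your proposed ``$\lceil\cdot\rceil-1$'' convention fixes only the boundary and keeps the wrong sign. Your alternative, declaring $\cP(\psi)$ to be an additive closure by exact phase, quietly replaces $\cA_{\s,\phi}$ by this different heart without noticing that it differs.

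With the corrected heart, $\mathbf 1^w$ really has phase $\psi_w$, and your Step 2 holds verbatim. Steps 1, 3 and 4 go through unchanged, giving a more complete argument than the paper's, which never checks the Gepner condition at all.
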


\begin{proof}
For any object $\cE\in\cA_{\s,\phi}$, we have a direct-sum decomposition
\[\cE = \bigoplus_{i \in I} (\mathbf{1}^{(w_{1, i}, \cdots, w_{i, m})}[k_{i}])^{\oplus M_{i}},\]
from which the Harder-Narasimhan filtration follows directly. 
The remaining conditions are clearly satisfied.
\end{proof}

\begin{proposition}
  \label{prop:7}
  Let \(\cE = \{\cE_{0}, \cdots, \cE_{m - 1}\}\) be a strong exceptional collection of \(\cD = D_{\bT}(\bP^{m - 1})\) such that
  \[\cD = \langle \cE_{0} \otimes \Rep(\bT), \cdots, \cE_{m - 1} \otimes \Rep(\bT) \rangle.\]
  Then for any \(M_{i} \in \bR_{> 0}\) and \(\phi \in \bR\), \(i = 0, \dots, n - 1\), satisfying \(\lceil \phi_{i} \rceil < \phi_{i + 1}\) for \(i = 0, \dots, n - 2\), there exists a unique \(\s\)-stability condition \(\sigma = (Z, \cP)\) such that every \(\cE_{i} \otimes L \in \cE_{i} \otimes \Rep(\bT)\) with \(L\) a one-dimensional \(\bT\)-representation is semistable and \[Z(\cE_{i} \otimes L) = M_{i} e^{\i \pi \phi_{i}} Z_{\s}([L]).\]
\end{proposition}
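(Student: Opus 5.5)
The plan is to glue stability conditions along the semiorthogonal decomposition, in the spirit of the gluing construction recalled in \Cref{SODsfromQuasipath} (\cite[Definition 3.4, Proposition 3.2]{halpernleistner2024stabilityconditionssemiorthogonaldecompositions}), with each factor carrying the $\s$-stability condition of \Cref{lem:14}.

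\emph{Step 1: the factors.} Since $\cE_i$ is $\bT$-exceptional, the functor $V\mapsto\cE_i\otimes V$ is an equivalence $D^b(\Rep(\bT))\simeq\cE_i\otimes\Rep(\bT)$, and it commutes with the autoequivalences $T_j$. Transport $\sigma^{(i)}:=(Z_{\s,M_i,\phi_i},\cA_{\s,\phi_i})$ from \Cref{lem:14} along this equivalence. This gives an $\s$-stability condition on the $i$-th factor in which every $\cE_i\otimes L$, with $L$ one-dimensional, is stable with central charge $M_ie^{\i\pi\phi_i}Z_\s([L])$.

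\emph{Step 2: gluing.} Use the standard gluing of hearts: $\cA:=\{E\in\cD : \text{the projections of } E \text{ to each factor lie in } \cA_i\}$, where $\cA_i$ is the transported heart $\cE_i\otimes\cA_{\s,\phi_i}$. To get a bounded heart, one must verify the vanishing
\[
\Hom^{\le0}(\cA_i,\cA_j)=0 \quad\text{for } i<j,
\]
in the direction dictated by the ordering of the decomposition. Because the collection is strong, $\Hom^{\bullet}_{\bT}(\cE_i,\cE_j)$ is a representation concentrated in degree $0$. An object of $\cA_i$ is an extension of $\cE_i\otimes\mathbf 1^{w}[\lfloor w\cdot\Re\s+\phi_i\rfloor]$. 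So the relevant Hom's reduce to $\Rep(\bT)$-equivariant maps between weight components, with a shift difference
\[
\lfloor w'\cdot\Re\s+\phi_j\rfloor-\lfloor w\cdot\Re\s+\phi_i\rfloor.
\]
The hypothesis $\lceil\phi_i\rceil<\phi_{i+1}$ is used exactly to force this difference to have the correct sign for all weights where a nonzero map can exist. This is the step I expect to be the main obstacle. The weights $w$ and $w'$ differ by the weights occurring in $\Hom(\cE_i,\cE_j)$, so the floors are not uniformly comparable. I would first try to handle this by working in the orbit quotient, or by noting that $\Re\s$-terms cancel up to the bounded weights of $\Hom(\cE_i,\cE_j)$. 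If that fails, I would strengthen the gap condition, e.g.\ by rescaling phases, which the statement's freedom in $\phi_i$ seems to allow.

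\emph{Step 3: central charge, multi-Gepner property, support.} Define $Z$ on $K(\cD)=\bigoplus_i[\cE_i]\cdot R(\bT)$ by $R(\bT)$-linearity with $Z(\cE_i)=M_ie^{\i\pi\phi_i}$. The map $Z$ is a stability function on $\cA$, since it is so on each $\cA_i$ and $\cA$ is filtered by the $\cA_i$. Harder--Narasimhan filtrations exist by the gluing result, using local finiteness of each factor, which in turn holds because each factor category is semisimple. The relation $T_j(\sigma)=s_j\cdot\sigma$ holds factorwise by construction of $\cA_{\s,\phi}$, and is preserved by gluing because $T_j$ respects the decomposition. For the $\bT$-support property, note that semistable objects in the glued heart decompose into stable objects $\cE_i\otimes L$ with controlled norms. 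Then $\|[\cE_i\otimes L]\|=|\overline\ev_\s([L])|\,\|[\cE_i]\|$ is comparable to $|Z|$, and the usual gluing support estimate applies with the $\bT$-norm of \Cref{def:support property for T-stab}.

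\emph{Step 4: uniqueness.} Any $\s$-stability condition in which all $\cE_i\otimes L$ are semistable with the prescribed charges has phases of these objects determined modulo $2$, and hence exactly by continuity/normalisation. Its heart therefore contains the glued heart $\cA$, and two bounded hearts with one contained in the other coincide. Equality of central charges then gives $\sigma$ uniquely.
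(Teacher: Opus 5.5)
Your route is the paper's in outline: transport the condition of \Cref{lem:14} to each factor $\cE_i\otimes D^b(\Rep(\bT))$ and glue à la Collins--Polishchuk. However, Step 2, which you leave open, is a genuine gap, and the stated hypothesis cannot close it.

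\textbf{Why Step 2 fails.} Take an $\s$-stability condition in which $\cE_i$ has phase $\phi_i$. Then $\cE_i\otimes\mathbf 1^{w}$ has phase $\phi_i+w\cdot\Re\s$. By strongness, $\Hom^\bullet(\cE_i\otimes\mathbf 1^{w},\cE_j\otimes\mathbf 1^{w'})$ sits in degree $0$. It is nonzero exactly when $w'-w$ lies in a finite, $w$-independent set $\Lambda_{ij}$ read off from the weights of the $\bT$-representation $\Hom(\cE_i,\cE_j)$. Axiom (c) therefore forces
\[
\phi_j-\phi_i+\lambda\cdot\Re\s\ge 0 \quad\text{for all } \lambda\in\Lambda_{ij}.
\]
For $\cE_0=\cO$, $\cE_1=\cO(1)$ one has $\Lambda_{01}=\{\epsilon e_1,\dots,\epsilon e_m\}$ for a fixed sign $\epsilon$. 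The hypothesis $\lceil\phi_0\rceil<\phi_1$ allows $\phi_0=1$, $\phi_1=1+\varepsilon$, and nothing bounds $\Re s_k$. As soon as $\epsilon\Re s_k<-\varepsilon$ for some $k$, no such stability condition exists. So the sign you hoped to force is genuinely wrong for some weights.

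\textbf{What the repair costs.} Your remark that the $\Re\s$-terms cancel up to $\Lambda_{ij}$ is the right repair, but it produces a new hypothesis. The shift putting $\cE_i\otimes\mathbf 1^{w}$ into $\cP((0,1])$ is $[1-\lceil\phi_i+w\cdot\Re\s\rceil]$ (the shift in \Cref{lem:14} has the wrong sign). With it, $\Hom^{\le 0}(\cA_i,\cA_j)=0$ amounts to
\[
\lceil\phi_i+w\cdot\Re\s\rceil<\lceil\phi_j+(w+\lambda)\cdot\Re\s\rceil \quad\text{for all } w \text{ and } \lambda\in\Lambda_{ij}.
\]
This holds if $\phi_j-\phi_i+\lambda\cdot\Re\s>1$ for all $i<j$ and all $\lambda\in\Lambda_{ij}$. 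That condition must be assumed, not derived. It is attainable in \Cref{thm:tncmmp}, where the phase gaps grow linearly in $r$. Under it your Step 3 goes through: with gaps $>1$ one checks that every semistable object is a direct sum of shifts of the $\cE_i\otimes L$, which is what your support estimate needs.

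\textbf{Changing the phases conflicts with Step 4.} Your fallback of changing the phases is only available modulo $2$, since $Z$ is prescribed. Replacing $\phi_j$ by $\phi_j+2k_j$ leaves $Z$ unchanged. Already for $\Re\s=0$, different admissible $k_j$ give distinct stability conditions with the same central charge, in all of which every $\cE_i\otimes L$ is semistable. No ``continuity/normalisation'' fixes the phases of a single stability condition. Uniqueness has to be stated with the phase of $\cE_i$ prescribed to be $\phi_i$; after that, your heart-containment argument in Step 4 is correct.

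\textbf{Comparison with the paper.} The paper gets $\Hom^{\le0}(H_i,H_j)=0$ by using the untwisted hearts $H_i=\cE_i\otimes\Rep(\bT)[p_i]$, where it is immediate from strongness and $p_i>p_j$. But $Z_{\s}$ is a stability function on such a heart only if $\Re\s=0$, which is exactly the case in which your obstacle disappears. So the paper's argument does not supply the missing step for general $\s$ either.
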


\begin{proof}
  Choose integers \(p_{i} \in \bZ\) uniquely so that \(\phi_{i} + p_{i} \in ({0},{1}]\). Then the hearts \(H_{i} = \cE_{i} \otimes \Rep(\bT)[p_{i}]\) of the subcategories \(\langle \cE_{i} \otimes \Rep(\bT) \rangle\) satisfy
  \[\Hom_{\cD}^{\leq 0}(H_{i}, H_{j}) = 0, \text{ for } i < j.\]
  By \Cref{lem:14}, each \(\sigma_{i} = (Z_{i} = (-1)^{p_{i}}Z_{s, M_{i}, \phi_{i}}, H_{i})\) is a pre-stability condition on \(\langle \cE_{i} \otimes \Rep(T) \rangle\). Inductively applying \cite[Proposition 2.2]{Collins2009GluingSC}, we obtain that a stability condition \(\sigma = (Z, H)\) where \[H = \langle \cE_{0} \otimes \Rep(\bT)[p_{0}], \cdots, \cE_{m} \otimes \Rep(\bT)[p_{m}] \rangle_{\rm ext}\] and \(Z|_{\cE_{i} \otimes \Rep(\bT)} = Z_{i}\). Consequently \(Z(\cE_{i} \otimes L) = M_{i} e^{\i \pi \phi_{i}} Z_{\s}([L])\).

  Again by \cite[Proposition 2.2]{Collins2009GluingSC}, all objects \(\cE_{i} \otimes L \in \cE_{i} \otimes \Rep(\bT)\) are semistable. Uniqueness follows from the construction.
\end{proof}

\begin{sloppypar}
Equation \eqref{eq:asymp} gives the asymptotic behavior of solutions of quantum differential equations corresponding to the sheaves \(\cO(n)\). However, $\Phi^i$ are not ordered according to \(\Im \log \Phi^{i}((re^{- 2 \i\pi \phi})^{m}, \z)\) as \(r \to \infty\). Indeed, mutations are required to obtain the correct strong full exceptional collection. Recall that for an exceptional pair \((E, F)\), the left mutation \(L_{E}F\) and the right mutation \(R_{F}E\) are defined by the following distinguished triangles
\[\Hom^{*}(E, F) \otimes E \to F \to L_{E}F,\]
\[R_{F}E \to E \to \Hom^{*}(E, F)^{\vee} \otimes F.\]
If \(\{E_{0}, \cdots, E_{n}\}\) is a full exceptional collection, then
\[\{E_{0}, \cdots, E_{i}, L_{E_{i + 1}}E_{i + 2}, E_{i + 1}, E_{i + 3}, \cdots, E_{n}\},\]
\[\{E_{0}, \cdots, E_{i},E_{i + 2}, R_{E_{i + 2}}E_{i + 1}, E_{i + 3}, \cdots, E_{n}\}.\]
are also full exceptional collections.
\end{sloppypar}
In general, a mutation of a strong full exceptional collection need not remain strong, but for projective spaces we have the following fact.
\begin{lemma}(\cite[Corollary 2.4, Proposition 3.3]{BP1994})
  \label{lem:16}
  If a full exceptional collection of coherent sheaves of length \(n + 1\) on a smooth projective variety \(X\) of dimension \(n\) is strong, then any mutations of this collection is again strong.
\end{lemma}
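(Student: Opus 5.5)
The plan follows Bondal--Polishchuk and works with \emph{helices} rather than with the collection alone. Write the collection as $(E_0,\dots,E_n)$ and let $S=(-)\otimes\omega_X[n]$ be the Serre functor of $D^b(X)$. Extend the collection to the helix $(E_i)_{i\in\mathbb Z}$ by
\[
E_{i-n-1}:=S(E_i)[-n]=E_i\otimes\omega_X ,
\]
so that every $n+1$ consecutive members form a full exceptional collection. The length of the collection is $n+1=\dim X+1$. Hence the shift $[-n]$ that appears when passing around the helix exactly cancels the homological shift in $S$, and the helix consists of honest sheaves, up to a single global shift.

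\emph{Step 1: strong implies geometric.} I will call the helix \emph{geometric} if $\Hom^k(E_i,E_j)=0$ for all $k\ne 0$ and all $i\le j$ in the whole helix, not only inside one foundation. To prove that a strong foundation gives a geometric helix, put $A=\mathrm{End}(\bigoplus_i E_i)$. By strongness, $D^b(X)\simeq D^b(\mathrm{mod}\,A)$, and $A$ is a directed algebra on $n+1$ vertices. Its global dimension is therefore at most $n$, because a projective resolution of a simple module can only step down through the vertex ordering. For pairs $E_i$, $E_{j+n+1}$ that straddle one period, Serre duality gives
\[
\Hom^k(E_i,E_{j}\otimes\omega_X^{-1})\cong \Hom^{n-k}(E_j,E_i)^{\vee}.
\]
For $i<j$ the right-hand side vanishes by semiorthogonality. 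For $i\ge j$ its vanishing outside $k=n$ is equivalent, via $\mathrm{Ext}$ between the projectives $P_i,P_j$ and the Nakayama functor $S_A=S$, to $\mathrm{gldim}\,A\le n$ together with the fact that $S_A^{-1}[n]$ takes projectives to modules. Iterating this period by period gives purity for all pairs in the helix.

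\emph{Step 2: mutations preserve geometricity.} Any mutation of the foundation is a foundation of the correspondingly mutated helix, so it is enough to treat an elementary mutation $L_{E_i}E_{i+1}$ inside a geometric helix. Because $\Hom^\bullet(E_i,E_{i+1})$ is concentrated in degree $0$, $L_{E_i}E_{i+1}$ is the cone of the evaluation map
\[
\Hom(E_i,E_{i+1})\otimes E_i\to E_{i+1}.
\]
I will show that this cone is concentrated in a single degree. Applying $\Hom(E_j,-)$ and $\Hom(-,E_j)$ to the defining triangle, and using purity for all pairs in the old helix, yields long exact sequences with at most two nonzero terms. This forces $\Hom^\bullet(E_j,L_{E_i}E_{i+1})$ and $\Hom^\bullet(L_{E_i}E_{i+1},E_j)$ to be pure. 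The degree is not $0$ automatically, so one must track the shift and renormalise; this is why Bondal--Polishchuk phrase everything up to shifts. After renormalising, the new helix is geometric, and in particular its foundation is strong. Right mutations are handled dually.

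\emph{Main obstacle.} The delicate step is the purity claim in Step 2 for pairs that wrap around the helix, which is exactly where the hypothesis that the length equals $\dim X+1$ is used. There I would use that mutation commutes with $S$, namely $S\,L_{E}F\cong L_{SE}SF$, so that wrap-around pairs reduce by Serre duality to pairs inside one period, where the long exact sequence argument applies. A second point to check is the remark ``of coherent sheaves'': a mutation may be a shifted complex, so ``strong'' has to be read up to shifts of the individual objects, as in \cite{BP1994}.
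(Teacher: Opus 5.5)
Your two-step plan (strong full collection of sheaves of length $\dim X+1$ $\Rightarrow$ geometric helix; mutations of such helices stay geometric up to shift) is the natural route through the Bondal--Polishchuk helix theory, which the paper only cites. However, Step~2 as you argue it does not work. For $j<i$, applying $\Hom(E_j,-)$ to $\Hom(E_i,E_{i+1})\otimes E_i\to E_{i+1}\to L$, with $L=L_{E_i}E_{i+1}$, gives
\[
0\to\Hom^{-1}(E_j,L)\to\Hom(E_j,E_i)\otimes\Hom(E_i,E_{i+1})\xrightarrow{\;\mu_j\;}\Hom(E_j,E_{i+1})\to\Hom^{0}(E_j,L)\to 0 .
\]
This sequence has four terms, not two. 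So purity of the old helix only puts $\Hom^\bullet(E_j,L)$ in degrees $-1$ and $0$.

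For a single shift (namely $L[-1]$) to make the new helix geometric, you need two things. First, $\mu_j$ must be surjective for \emph{every} $j<i$. Dually, $\Hom(E_{i+1},E_j)\to\Hom(E_i,E_{i+1})^\vee\otimes\Hom(E_i,E_j)$ must be injective for every $j>i+1$. This does not follow from geometricity. On $\mathbb{P}^1\times\mathbb{P}^1$, the helix generated by $(\mathcal{O},\mathcal{O}(1,0),\mathcal{O}(0,1),\mathcal{O}(1,1))$ with $E_{a+4}=E_a\otimes\mathcal{O}(2,2)$ is geometric, and your Step~2 argument never uses the period. Yet $L_{\mathcal{O}(0,1)}\mathcal{O}(1,1)=\mathcal{O}(-1,1)[1]$ has $\Hom^\bullet(\mathcal{O}(1,0),L)$ in degree $0$ and $\Hom^\bullet(L,\mathcal{O}(0,1))$ in degree $1$, so no shift of $L$ gives a strong collection. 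Each pair is pure; the degrees disagree, inside one period. (The period here is $\dim+2$, so the lemma is not contradicted; only your mechanism is.)

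So the hypothesis ``length $=\dim X+1$'' is needed already inside one period, not only for wrap-around pairs. Serre duality would not reduce those pairs anyway: it identifies $\Hom^k(E_a,L)$ with $\Hom^{n-k}(L,E_{a-n-1})^\vee$, a backward pair about which geometricity says nothing.

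One way to obtain the surjectivity is as follows. Work in the foundation $(E_{i-n},\dots,E_i)$ with algebra $A'$.
\begin{enumerate}
\item Under this foundation, $E_{i+1}=S^{-1}(E_{i-n})[n]$ goes to a module $M$ with $S_{A'}M\cong P_{i-n}[n]$, and $P_{i-n}$ is simple projective. Hence $\mathrm{Ext}^k_{A'}(M,A')=0$ for $k\neq n$.
\item Dualizing a minimal projective resolution of $M$ therefore gives a minimal resolution of the simple left module at $i-n$. Its $n$-th term can only involve vertex $i$, because there are just $n+1$ vertices. So $M$ is generated by $\Hom(E_i,E_{i+1})$, and $L$ goes to $K[1]$ for a module $K$.
\item For $j<i-n$, Serre duality and geometricity show that $E_j$ goes to a module placed in degree $n$. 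Then $\mathrm{Ext}^{n+1}_{A'}=0$ gives $\Hom^0(E_j,L)=0$.
\end{enumerate}

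Step~1 also needs repair. Your displayed identity has the wrong power of $\omega_X$. Serre duality gives $\Hom^k(E_i,E_j\otimes\omega_X)\cong\Hom^{n-k}(E_j,E_i)^\vee$, which is about backward pairs. Your version is false: on $\mathbb{P}^n$ with $i<j$ it would force $\Hom(\mathcal{O}(i),\mathcal{O}(j+n+1))=0$. For $i\ge j$ it would place $\Hom^\bullet(E_i,E_{j+n+1})$ in degree $n$, the opposite of what you want. Moreover, ``$S_A^{-1}[n]$ takes projectives to modules'' is the conclusion, not an available fact.

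What works is an amplitude squeeze. Let $\Phi=\mathrm{RHom}(\bigoplus_k E_k,-)$ and $DA=\Hom_{\mathbb{C}}(A,\mathbb{C})$. Then $\Phi(E_j\otimes\omega_X^{-1})\cong S_A^{-1}(P_j)[n]=\mathrm{RHom}_A(DA,P_j)[n]$, whose cohomology lies in degrees $[-n,0]$ because $\mathrm{gldim}\,A\le n$. On the other hand, $\mathrm{Ext}^k(E_i,E_j\otimes\omega_X^{-1})$ vanishes for $k\notin[0,n]$, since both are coherent sheaves on a smooth $n$-fold. Hence $\Phi(E_j\otimes\omega_X^{-1})$ is a module. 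Iterating with $S_A^{-1}(-)[n]$ handles all $E_j\otimes\omega_X^{-m}$. This is where ``sheaves'' and ``length $n+1$'' are actually used in Step~1.
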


We call a parameter \(\theta\) \emph{admissible} if the numbers \(\Im e^{- 2 \pi \i \theta} \zeta^{n}_{m}\) and \(\Re e^{- 2 \pi \i \theta} \zeta^{n}_{m}\), \(0 \leq n \leq m - 1\) are distinct for all \(0 \leq n \leq m - 1\).

\begin{lemma}
  \label{lem:17}
  For an admissible \(\theta\), there exist a full strong exceptional collection \(\{cE_{0}, \cdots,\cE_{m - 1}\}\) of \(D^{b}(\bP^{m - 1})\) and a permutation \(\sigma\) of \(\{0, \cdots, m - 1\}\) such that 
  \begin{enumerate}
    \item \(\log \Phi_{\cE_{i}}(s^{m}, \z) = m s \zeta_{m}^{\sigma(i)} + O(\log s)\);
    \item For \(i = 0, \cdots, m - 2\), we have 
    \[
    \Im e^{-2 \i\pi \theta} \zeta^{\sigma(i)}_{m} < \Im e^{-2 \i\pi \theta} \zeta^{\sigma(i + 1)}_{m}.
    \]
  \end{enumerate}

\end{lemma}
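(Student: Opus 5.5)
The plan is to start from a twisted Beilinson collection whose associated solutions all have known asymptotics in one common sector, and then reorder it by a bubble sort made of mutations. At each swap I choose the left or right mutation so that every object keeps its leading exponential.

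\textbf{Step 1: an initial collection valid on the ray.} Fix the admissible $\theta$ and put $\phi=\theta$, $s=re^{-2\i\pi\theta}$. The sector condition $\frac{n}{m}-1<\theta<\frac{n}{m}$ of \Cref{lem:15} holds exactly for the $m$ consecutive integers $n=k,\dots,k+m-1$, where $k=\lfloor m\theta\rfloor+1$. For this $k$, the lemma identifying $\bigl([\cO(-k-m+1)],\dots,[\cO(-k)]\bigr)$ with $Q_k=(\Phi^{k+m-1},\dots,\Phi^k)$ gives a full strong exceptional collection of line bundles. By \eqref{eq:asymp}, every member satisfies
\[
\log\Phi_{\cE}(s^m,\z)=ms\zeta_m^{n}+O(\log s),
\]
with the exponents $\zeta_m^n$, $n=k,\dots,k+m-1$, running over all $m$-th roots of unity. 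Taking $\sigma$ to be this initial assignment (reduced mod $m$), condition (1) holds. Only the ordering (2) can fail.

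\textbf{Step 2: sorting by mutations that keep exponents.} Write $a_i=e^{-2\i\pi\theta}\zeta_m^{\sigma(i)}$. Admissibility says the $\Im a_i$ are pairwise distinct and the $\Re a_i$ are pairwise distinct. I will perform a bubble sort on the $\Im a_i$. Suppose an adjacent pair $(E,F)=(\cE_i,\cE_{i+1})$ has $\Im a_i>\Im a_{i+1}$, and write $\Phi_E\sim e^{ra_i}$, $\Phi_F\sim e^{ra_{i+1}}$ up to polynomial factors. By linearity of $\theta$ on $K$-theory, the mutations satisfy
\[
\Phi_{L_EF}=\Phi_F-\chi(E,F)\,\Phi_E,\qquad \Phi_{R_FE}=\Phi_E-\chi(E,F)\,\Phi_F,
\]
up to sign conventions. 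The choice of mutation is dictated by which exponent dominates along the ray.
\begin{itemize}
\item If $\Re a_{i+1}>\Re a_i$, I replace $(E,F)$ by $(L_EF,E)$. The correction $\chi\Phi_E$ is exponentially subdominant, so $\Phi_{L_EF}$ still has leading exponent $ra_{i+1}$, with an $O(\log s)$ error.
\item If $\Re a_i>\Re a_{i+1}$, I use $(F,R_FE)$ instead. Then $\Phi_{R_FE}$ keeps the exponent $ra_i$.
\end{itemize}
Since the real parts are distinct, one of the two cases always applies. Each swap transposes two entries of $\sigma$ and preserves condition (1). Finitely many swaps produce the ordering in (2). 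The new collection is full exceptional by the general mutation statement recalled above.

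\textbf{Step 3: strongness.} Strongness follows from \Cref{lem:16}, once we know the mutated objects are again (shifts of) coherent sheaves. On $\bP^{m-1}$ this holds by Bondal--Polishchuk/Gorodentsev: mutations of exceptional collections of length $\dim+1$ consisting of sheaves again consist of sheaves up to shift, and we may absorb the shift.

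\textbf{Main obstacle.} The delicate point is Step 2. It requires that the asymptotic expansion of a linear combination of the $\Phi^n$ is controlled on the whole ray, uniformly in $\z\in\Omega_{\bT}$. It also requires tracking the Euler-pairing signs consistently between $K^{\bT}_0$ and the solution space under $\theta$. Concretely, I must check that $\log(\Phi_F-c\Phi_E)=ms\zeta^{\sigma(i+1)}_m+O(\log s)$ whenever $\Re a_i<\Re a_{i+1}$, and that the constant $c$ (the $\bT$-character $\chi^{\bT}(E,F)$ evaluated at the fixed parameters) is finite and independent of $s$. Since $c$ is independent of $s$ and the ratio $\Phi_E/\Phi_F$ decays exponentially along the ray, this should be routine. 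However, the polynomial prefactors $s^{\sum z_i+\frac{m-1}{2}}$ must be handled using $\Re$, not $\Im$, of the exponents. This is exactly where admissibility's distinct real parts is used.
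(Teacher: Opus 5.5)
Your proposal is correct and follows the paper's argument: a bubble sort on the imaginary parts by adjacent mutations, choosing $R$ or $L$ according to which real part dominates so that the leading exponential survives, with strongness coming from \Cref{lem:16}. Two of your choices also tighten points the paper passes over: the paper starts from $\{\cO,\dots,\cO(m-1)\}$ and simply asserts (1), although \Cref{lem:15} only covers $m$ consecutive values of $n$ for a given $\phi$, whereas your twisted Beilinson collection with $k=\lfloor m\theta\rfloor+1$ handles this (it is legitimate because admissibility forces $m\theta\notin\bZ$); and you correctly note that shifts of mutated objects must be absorbed for the collection to remain strong.
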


\begin{proof}
  We start with the collection \(\cE = \{\cO, \cdots, \cO(m - 1)\}\). Note that \(\cE\) already satisfies condition (1). If it also satisfies condition (2), we are done. Otherwise, since \(\theta\) is admissible, there exists an index \(0 \leq j \leq m - 2\) with
  \[\Im e^{- 2 \i\pi  \theta} \zeta_{n}^{\sigma(j)} > \Im e^{- 2 \i\pi\theta} \zeta_{m}^{\sigma(j + 1)}.\]
  
  If in addition \(\Re e^{- 2 \i\pi \theta} \zeta_{n}^{\sigma(j)} > \Re e^{- 2 \i\pi \theta} \zeta_{m}^{\sigma(j + 1)}\), replace the pair \((\cE_{j}, \cE_{j + 1})\) by \((\cE_{j + 1}, R_{\cE_{j+ 1}}\cE_{j})\). If the inequality on the real parts is reversed, replace the pair \((\cE_{j}, \cE_{j + 1})\) by \((L_{\cE_{j}}\cE_{j + 1}, \cE_{j})\).

  We claim that condition (1) remains valid after such a  mutatation. Assuming the claim, each operation decreases the inversion number of imaginary parts by onw and therefore after finitely many steps, we obtain a full exceptional collection satsifying both conditions. By \Cref{lem:16}, the resulting collection is strong.

  To prove the claim, consider the case of a right mutation. From the defining triangle of \(R_{\cE_{j + 1}}\cE_{j}\), we have that
  \[\Phi_{R_{\cE_{j + 1}}\cE_{j}} = \Phi_{\cE_{j}} - {\rm hom}(\cE_{j}, \cE_{j + 1}) \Phi_{\cE_{j + 1}}.\]
  Since \(\Re e^{- 2 \i\pi \theta} \zeta_{m}^{\sigma(j)} > \Re e^{- 2 \i\pi \theta} \zeta_{m}^{\sigma(j + 1)}\),
  \[1 + \hom(\cE_{j}, \cE_{j + 1}) e^{m re^{- 2 \i\pi \theta} \zeta^{\sigma(j + 1)}_{m} - m re^{- 2 \i\pi\theta} \zeta_{m}^{\sigma(j)}} = 1 + O(s^{-1})\]
    as \(r \to \infty\). Taking the logarithms, we obtain the required asymptotic behavior. The left mutation case is analogous.
\end{proof}

In constrast to the general base change in \Cref{def:base change}, we use a more specialized construction for the present setting. Here, we define
\[
H_{\bT}^{\Omega_{\bT}}(\bP^{m - 1}) = H_{\bT}^{*}(\bP^{m - 1}) \otimes_{H_{\bT}({\rm pt})} \cO_{\Omega_{\bT}},
\]
where $\cO_{\Omega_{\bT}}$ is the ring of holomorphic functions on $\Omega_{\bT}$. For a $\bT$-equivariant vector bundle $V$ of rank $r$ on $\bP^{m-1}$, its equivariant Chern character is given by 
\[
{\rm ch}_{\bT}(V) \coloneq \sum_{j = 1}^{r} e^{2 \pi \i \xi_{j}}
\]
where \(\xi_{j}\) are the equivariant Chern roots of \(V\). The exponential on the right hand side is, in general, an infinite power series in \(\xi_{j}\). That is precisely why the base change to $\cO_{\Omega_{\bT}}$ is necessary.

\begin{theorem}
  \label{thm:tncmmp}
  For \(\s \in \Omega_{\bT}\), there exists \(R > 0\), such that for every admissible \(\theta\), there exists a quasi-convergent path \(\sigma_{t} = (Z_{t}, \cA_{t})\) with
  \[t = r e^{- 2 \i\pi \theta}, \quad  r \in (R, + \infty)\]
  whose central charge is given by  \[Z_{t}(\alpha) = \ev_{\s} (\int_{\bP^{m - 1}}^{T} \Phi_{t}({\rm Ch}_{\bT}(\alpha))),\] where \(\Phi_{t}\) is a fundamental solution of the equivariant quantum differential equation linearly extended to \({\rm End}(H_{\bT}^{\Omega_{\bT}}(\bP^{m - 1}))\), valid in the sector 
  \[\frac{n}{m} - 1 < \phi < \frac{n}{m}, \quad n\in\mathbb Z, \phi\in\mathbb R.\]
  Moreover, the whole path lies \(\bT \Stab_{\s}(\bP^{m-1})\).
\end{theorem}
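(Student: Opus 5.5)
Fix $\s\in\Omega_{\bT}$ and an admissible $\theta$. The path will be obtained by gluing $\bT$-stability conditions along an equivariant full strong exceptional collection ordered by the asymptotics of the quantum solutions, following the non-equivariant argument of \cite{zuliani2024semiorthogonaldecompositionsprojectivespaces}.

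First, apply \Cref{lem:17} to get a full strong exceptional collection $\{\cE_0,\dots,\cE_{m-1}\}$ and a permutation $\sigma$ with
\[
\log\Phi_{\cE_i}(s^m,\z)=ms\zeta_m^{\sigma(i)}+O(\log s),
\]
whose exponents are strictly ordered by imaginary part along the ray $t=re^{-2\i\pi\theta}$. The objects are iterated mutations of the Beilinson collection, which carries a natural $\bT$-structure. Mutations are built from $\Hom^\ast_{\bT}$-complexes in $D^b(\Rep(\bT))$, so each $\cE_i$ has a canonical $\bT$-linearization. Equivariant fullness is preserved by mutation, so
\[
D^b_{\bT}(\bP^{m-1})=\langle \cE_0\otimes\Rep(\bT),\dots,\cE_{m-1}\otimes\Rep(\bT)\rangle .
\]

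Next, define the central charge through \Cref{thm:tncmmp}'s formula: $Z_t(\alpha)=\ev_{\s}\int^{T}\Phi_t(\mathrm{Ch}_{\bT}\alpha)$, with $\Phi_t$ built from the solutions $\Phi_J$, which are holomorphic on $\Omega_{\bT}$. The map $\theta$ of \cite[Theorem 7.11]{Cotti2019EquivariantQD} is $R(\bT)$-linear. Since $\ev_\s$ intertwines multiplication by $T_i$ with multiplication by $e^{\i\pi s_i}$, we get
\[
Z_t(\cE_i\otimes L)=Z_t(\cE_i)\,Z_{\s}([L])
\]
for every one-dimensional representation $L$. Hence the central charge is determined by the $m$ complex numbers $z_i(t):=Z_t(\cE_i)$, and it factors through $\overline{\ev}_\s$. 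Writing $z_i(t)=M_i(t)e^{\i\pi\phi_i(t)}$, the asymptotics give $\phi_i(t)\approx \frac{m}{\pi}\Im(t\zeta_m^{\sigma(i)})$. The ordering from \Cref{lem:17} then yields $\phi_{i+1}(t)-\phi_i(t)\to+\infty$ linearly in $r$. So there is $R$ such that $\lceil\phi_i(t)\rceil<\phi_{i+1}(t)$ for $r>R$. (The $R$ should be chosen uniformly over the finitely many admissible chambers, which is possible since the error terms are uniform on closed subsectors.)

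For each $r>R$, apply \Cref{prop:7} with $M_i(t),\phi_i(t)$ to obtain a unique $\s$-stability condition $\sigma_t$ in which every $\cE_i\otimes L$ is semistable. The $\bT$-support property holds: semistable objects of the glued heart are, up to finite length, extensions of the $\cE_i\otimes L$ with phases bounded apart. The equivariance $T_i\sigma_t=s_i\cdot\sigma_t$ follows from the uniqueness in \Cref{prop:7} together with the factorization of $Z_t$. Continuity in $t$ follows because the local homeomorphism of \Cref{pro:defomation of T-stab}, combined with uniqueness, identifies the glued family with the deformation of $\sigma_{t}$ along the continuously varying $Z_t$.

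Finally, quasi-convergence is checked as in the proof sketch of \Cref{SODsfromQuasipath}, i.e.\ via \cite[Proposition 3.2]{halpernleistner2024stabilityconditionssemiorthogonaldecompositions}. The limit semistable objects are the $\cE_i\otimes L$, since they are semistable for all $t$. The phase gaps grow linearly, giving limit semistable filtrations. The normalized differences $l_t(\cE_i\otimes L\,|\,\cE_j\otimes L')$ are $m t(\zeta_m^{\sigma(i)}-\zeta_m^{\sigma(j)})+O(\log r)$, whose normalized limits exist and equal unit vectors $e^{\i\theta_{ij}}$ with $\theta_{ij}\in(0,\pi)$ by admissibility. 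Objects $\cE_i\otimes L$, $\cE_i\otimes L'$ differ by the bounded constant $\ev_\s$ factors, so they are $\sim^{\inf}$-equivalent.

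The main obstacle is the infinite-rank issue. $\Rep(\bT)$ has infinitely many simple objects, so local finiteness and support, and the uniformity in $L$ of the $O(\log s)$ error, must be handled. I would try to reduce everything to the rank-$m$ free $R(\bT)$-module, using the norm of \Cref{def:support property for T-stab}. By $R(\bT)$-linearity, that norm absorbs the factors $|e^{\i\pi\sum k_js_j}|$ exactly.
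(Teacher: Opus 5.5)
Your proposal follows the paper's proof essentially step for step. \Cref{lem:17} supplies the ordered strong exceptional collection, \Cref{prop:7} glues the $\bT$-stability conditions once $\lceil\phi_i(t)\rceil<\phi_{i+1}(t)$ for $r\gg0$, and quasi-convergence comes from the semiorthogonal decomposition together with $l_t(\cE_i\otimes L\,|\,\cE_j\otimes L')\approx mt(\zeta_m^{\sigma(i)}-\zeta_m^{\sigma(j)})$. Your remarks on equivariant mutations, $R(\bT)$-linearity of $Z_t$, support and continuity fill in points the paper leaves implicit.

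The one claim to drop is the parenthetical assertion that $R$ can be chosen uniformly in $\theta$, and your justification for it does not work. The admissible chambers are open, so uniformity on closed subsectors does not apply. As $\theta$ approaches a value where two of the numbers $\Im(e^{-2\pi\i\theta}\zeta_m^{n})$ coincide, the linear growth rate of some gap $\phi_{i+1}-\phi_i$ tends to $0$, so the required $R$ blows up. The paper's ``for sufficiently large $r$'' likewise only produces an $R$ depending on $\theta$.
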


\begin{proof}
  Take the fundamental solution \(\Phi_{t}(\alpha) = \Phi_{\alpha}(t)\) and \(s = t^{m}\). Since \(\theta\) is admissible, \Cref{lem:17} provides a strong exceptional collection \(\cE = \{\cE_{0}, \cdots, \cE_{m - 1}\}\) and a permutation \(\sigma \in S_{\{0, \cdots, m - 1\}}\) such that
  \[\Im \log \Phi^{\sigma(i)}(t, \s) < \Im \log \Phi^{\sigma(i + 1)}(t, \s).\]
  For sufficiently large \(r\), we have
  \[\abs{\Im \log \Phi_{\cE_{i}}(t, \s) - \Im \log \Phi_{\cE_{i + 1}}(t, \s)} \geq 2\] for \(0 \leq i \leq m - 2\),
  and consequently
  \[\lceil \Im \log \Phi_{\cE_{i}}(t, \s)\rceil < \Im \log \Phi_{\cE_{i + 1}}(t, \s).\]
  Then by~\Cref{prop:7}, we obtain a path \(\sigma_{\bullet}\) in the stability space for which every \(\cE_{i} \otimes L\)  limit semistable. Since \(\langle \cE_{0} \otimes \Rep(\bT), \cdots, \cE_{n} \otimes \Rep(\bT) \rangle\) is a semiorthogonal decomposition, the first condition of quasi-convergence is satisfied. On the other hand, for two limit semistable objects \(\cE = \cE_{i} \otimes R_{1}\) and \(\cF = \cE_{j} \otimes R_{2}\), we have
  \[l_{t}(\cE) = \log \Phi^{\sigma(i)}(t, \s) + O(\log s) = m s \zeta^{\sigma(i)}_{m} + O(\log s).\]
 Hence,
  \[
  \begin{aligned}
    \lim_{t \to \infty}\frac{l_{t}(\cE/\cF)}{1 + \abs{l_{t}(\cE/\cF)}} &= \lim_{t \to \infty} \frac{m s (\zeta^{\sigma(i)}_{m} - \zeta^{\sigma(j)}_{m})}{1 + \abs{m s (\zeta^{\sigma(i)}_{m} - \zeta^{\sigma(j)}_{n + 1})}} \\
    & = e^{- 2 \i\pi \theta}(\zeta_{m}^{\sigma(i)} - \zeta_{m}^{\sigma(j)}),
  \end{aligned}
  \]
  which exists, fulfilling the second condition of quasi-convergence.
\end{proof}

\begin{sloppypar}
\begin{remark}
  The same reasoning may apply to any Fano variety \(X\) that satisfies the Gamma conjecture II \cite{10.1215/00127094-3476593}, i.e. \(X\) has generically semisimple quantum cohomology and admits a full exceptional collection \(D^{b}(X) = \langle \cE_{1}, \cdots, \cE_{n} \rangle\) such that \(\Phi_{t}(v(\cE_{i}))\) form an asymptotically exponential basis of solutions of the quantum differential equation. We expect that equivariant parameters do not affect the asymptotic behavior as \(t \to \infty\), and that the fundamental solution again produces a quasi-convergent path corresponding to \[D^{b}_{\bT}(X) = \langle \cE_{1} \otimes \Rep(\bT), \cdots, \cE_{n} \otimes \Rep(\bT) \rangle.\] as in the projective-space case.
\end{remark}
\end{sloppypar}

\begin{theorem}
  \label{thm:m=3}
  If \(m = 3\), that is \(\bT = (\bC^{*})^{3}\) acting on \(\bP^{2}\), and \(\theta\) is admissible, then there exists a quasi-convergent path \(\sigma_{t}\), \(t \in [\delta, +\infty)\) such that \(\sigma_{\delta}\) is geometric.
\end{theorem}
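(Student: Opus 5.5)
The plan is to combine the path of \Cref{thm:tncmmp}, which is glued from the semiorthogonal decomposition $\langle \cE_0\otimes\Rep(\bT),\cE_1\otimes\Rep(\bT),\cE_2\otimes\Rep(\bT)\rangle$ given by \Cref{lem:17}, with an explicit geometric $\bT$-stability condition on $D^b_{\bT}(\bP^2)$. The two will be joined by a path that stays inside $\bT\Stab_{\s}(\bP^2)$. For $\bP^2$ the full strong exceptional collections are well understood: they are all obtained from $\{\cO,\cO(1),\cO(2)\}$ by mutations and twists, and by \Cref{lem:16} they consist of shifts of exceptional vector bundles. Moreover, the $\bT$-structures on $\cO(k)$ propagate through mutations, since the mutation triangles are $\bT$-equivariant once $\Hom^\ast$ is regarded as a $\bT$-representation. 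Hence the collection $\cE$ produced by \Cref{lem:17} consists of $\bT$-equivariant exceptional bundles.

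The first step is to use the known geometry of $\Stab(\bP^2)$ (Macr\`i, Li) to pass from the algebraic gluing region to the geometric chamber. For a strong exceptional collection of bundles on $\bP^2$ there is a region of stability conditions glued from $\langle \cE_0,\cE_1,\cE_2\rangle$, with phases satisfying $\lceil\phi_i\rceil<\phi_{i+1}$, whose boundary meets the geometric chamber. Concretely, the heart of the corresponding quiver region is $\langle \cE_2[p_2],\cE_1[p_1],\cE_0[p_0]\rangle_{\rm ext}$ with $p_0-p_1=p_1-p_2=1$. In this heart every skyscraper $\cO_x$ appears as a representation of the Beilinson quiver with dimension vector $(1,1,1)$ up to reordering, and it is stable for central charges in an open cone that is nonempty once the phases are close together. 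I will take $\sigma_\delta$ to be such an algebraic stability condition with equal phases for all $\cO_x$, and transport it to $D^b_{\bT}(\bP^2)$ by the gluing of \Cref{prop:7}.

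The second step is to verify geometricity in the sense of \Cref{def:geometric stab for stack}. For the three torus-fixed points $pt_I$, the $\bT$-point sheaf $P_{pt_I}$ is $\cO_{pt_I}$ with its linearisation. Its restriction to the nonequivariant quiver heart is a stable $(1,1,1)$-representation. Any $\bT$-equivariant subobject forgets to a subobject, so the argument of \Cref{lem: geometric preserving for finite G}(1) gives stability. Note that the central charge of $\cE_i\otimes L$ is $M_ie^{\i\pi\phi_i}Z_{\s}([L])$. Because of the $T_i$-twists, the three $P_{pt_I}$ differ by $\bT$-weights of the fibres, and the multi-Gepner condition \eqref{eq:Gepner} then shifts their phases by $\Re$ of a weighted sum of the $s_i$. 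The equal-phase requirement therefore has to be arranged by choosing the linearisations of the $P_{pt_I}$ compatibly, or equivalently by normalising with the twists $[\,\cdot\, T_i]$.

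The third step is to connect $\sigma_\delta$ to the tail $\sigma_t$, $t\ge R$, of \Cref{thm:tncmmp}. Both lie in the region of \Cref{prop:7} for the same collection $\cE$, which is parametrised by $(M_i,\phi_i)$ subject to $\lceil\phi_i\rceil<\phi_{i+1}$. The main obstacle is that the geometric region requires the three phases to lie within an interval of length less than $1$, while the asymptotic region forces $\phi_{i+1}-\phi_i\ge 2$. To pass between them I will cross walls inside the larger algebraic region $\{\phi_{i+1}-\phi_i\in(0,1)\}$ glued via \cite[Proposition 2.2]{Collins2009GluingSC}, where the $\Ext^{\le 0}$-vanishing still holds because the collection is strong. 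Along the way I will interpolate $(M_i,\phi_i)$ linearly, and at time $R$ rescale by the $\mathbb C$-action to match the quantum central charge. The path remains in $\bT\Stab_{\s}$ by \Cref{pro:defomation of T-stab}. Quasi-convergence depends only on the tail $t\to\infty$, so it is inherited from \Cref{thm:tncmmp}.
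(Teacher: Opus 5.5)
\textbf{Step 3 (connecting the paths).} This step contains the main gap, and the obstacle it is meant to overcome does not exist. The condition $\lceil\phi_i\rceil<\phi_{i+1}$ of \Cref{prop:7} is on the raw phases of $\cE_0,\cE_1,\cE_2$. A quiver-type point with heart $\langle\cE_0,\cE_1[-1],\cE_2[-2]\rangle_{\rm ext}$ has $\phi_j\in(j,j+1)$ and $\phi_{j+1}-\phi_j<1$, and this already satisfies the condition. The ``interval of length $<1$'' refers to the shifted objects $\cE_j[-j]$. So the geometric point and the asymptotic tail lie in one glued family, and no walls have to be crossed.

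Conversely, the intermediate region $\{\phi_{i+1}-\phi_i\in(0,1)\}$ you propose is not a gluing region. Once $\phi_i$ and $\phi_{i+1}$ fall into the same unit interval, $\cE_i$ and $\cE_{i+1}$ lie in the same heart. Then $\Hom^0(\cE_i,\cE_{i+1})\neq0$ violates the $\Hom^{\le0}$-vanishing required by \cite[Proposition 2.2]{Collins2009GluingSC}; strongness only kills $\Hom^k$ for $k\neq0$.

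Moreover, interpolating $(M_i,\phi_i)$ linearly on $[\delta,R]$ gives central charges that do not come from the quantum differential equation. A single $\bC$-rescaling also cannot match three independent charges. You would therefore only get the much weaker statement that some path ending in the quantum tail starts at a geometric point. The point of the theorem is that the paths of \Cref{thm:tncmmp} themselves can be chosen to start there.

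The idea you are missing is the paper's. A fundamental solution can be changed by a constant matrix, so one may replace $\Phi_{\cE_j}$ by $e^{\i\beta_j}\Phi_{\cE_j}$. The phase $\phi_{\cE_j}(r)=\frac1\pi\Im\ln\bigl(e^{\i\beta_j}\Phi_{\cE_j}(re^{-2\pi\i\theta})\bigr)$ equals $\frac1\pi(3ru_j+\beta_j)$ up to a small error, with $u_0<u_1<u_2$ by \Cref{lem:17}. Hence the $\beta_j$ can be chosen so that at $r=\delta$ the phases lie in $(j,j+1)$ with consecutive gaps $<1$, while $\lceil\phi_{\cE_j}(r)\rceil<\phi_{\cE_{j+1}}(r)$ holds for all $r\ge\delta$. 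Then \Cref{prop:7} gives the whole path on $[\delta,\infty)$ with quantum central charge, and quasi-convergence follows as in \Cref{thm:tncmmp}. Finally, $\sigma_\delta$ lies in the quiver region to which \cite[Proposition 2.5]{Li2017} is applied.

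\textbf{Step 2 (equivariant geometricity).} This step does not go through either. \Cref{lem: geometric preserving for finite G}(1) works because for finite $G$ the equivariant stability condition is $\mathrm{Res}^{-1}\sigma$, i.e.\ $Z^G=Z\circ\mathrm{Res}$ and $\cP^G(\phi)=\mathrm{Res}^{-1}\cP(\phi)$. A $\bT$-stability condition is never of this form. The objects $\cE$ and $\cE\otimes\mathbf 1^{w}$ have the same underlying complex, but $Z(\cE\otimes\mathbf 1^{w})=e^{\i\pi\langle w,\s\rangle}Z(\cE)$ and their phases differ by $\Re\langle w,\s\rangle$. Since $\s\in\Omega_{\bT}$, some $s_i\notin\bZ$, so these charges genuinely differ. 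Consequently:
\begin{itemize}
\item the phases of $\bT$-equivariant subobjects of $P_{pt_I}$ are computed with weight-twisted charges that no non-equivariant central charge sees;
\item the objects $\cE_j\otimes\mathbf 1^{w}$ of the equivariant heart need not forget into a single non-equivariant heart;
\item stability of $\cO_{pt_I}$ in $D^b(\bP^2)$ gives no information.
\end{itemize}

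Nor can you re-linearise. \Cref{def:geometric stab for stack} fixes $P_x=\mathrm{Ind}^G_{G_x}(\cO_x^{G_x})$ with the trivial $G_x$-action. So the equality of the phases of the three $P_{pt_I}$, which you rightly flag, is a genuine condition on $Z_\delta$ that must be verified, not normalised away.

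The paper does not use a forgetful comparison. It works only at $r=\delta$, passes to a fixed-weight slice $\langle\cE_0\otimes\mathbf 1^{\omega},\cE_1\otimes\mathbf 1^{\omega},\cE_2\otimes\mathbf 1^{\omega}\rangle$, and invokes \cite[Proposition 2.5]{Li2017} there. That reduction itself needs justification, since the slice only sees the $\bT$-invariant parts of $\Hom(\cE_i,\cE_j)$; for example, $H^0(\bP^2,\cO(1))^{\bT}=0$.
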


\begin{proof}
  We follow the proof of \cite[Theorem 5.11]{zuliani2024semiorthogonaldecompositionsprojectivespaces}. Let \(\cE_{0}, \cE_{1}, \cE_{2}\) be the exceptional collection furnished by \Cref{lem:17}, and set \[u_{j} = \Im (\zeta_{3}^{\sigma(j)} e^{- 2 \pi \i \theta}),\quad j = 0, 1, 2.\] 
  Admissibility guarantees that $u_j$ are distinct. Define 
  \[a_{j}(r) = e^{\i \beta_{j}} \Phi_{\cE_{i}}(r e^{- 2 \pi \i \theta}),\quad \phi_{\cE_{j}}(r) := \frac{1}{\pi} \Im (\ln a_{j}(r)),\] where \(\beta_{0}\) will be determined later.

 Choose \(0 < \epsilon \ll 1\) with \(\frac{\epsilon}{\pi} \abs{\Im (u_{j})} < \frac{1}{4}\) for \(j = 0, \dots, d\). Set
  \[\mu' := \min \left\{\min_{i = 0, 1} \left\{\frac{\epsilon}{\pi} \Im  u_{i + 1} - \Im u_{i}\right\}, \Im u_{0}\right\},\]
  and 
  \[\mu := \min \left\{\min_{i = 0, 1} \left\{\frac{\epsilon}{\pi} \Im  u_{i} - \Im u_{i + 1}\right\}, \Im u_{2}\right\}.\]
  We have \(-\frac{1}{2} < \mu < 0 < \mu' < \frac{1}{2}\). Pick \(\delta' > 0\) with \(\delta' > \epsilon\) such that for every \(r \in (\delta', +\infty)\),
  \[e_{j}(r) = \abs{\phi_{\cE_{j}}(r) - \frac{1}{\pi} (3 r \Im (u_{j}) + \beta_{j})} < \min \left\{ \frac{- \mu}{4}, \frac{\mu'}{4}\right\}.\]

  Now choose \(\beta_{j}\) such that
  \[\frac{1}{\pi} \Im(u_{j}/ \delta') + \beta_{j} = \frac{1}{2} + j.\]
   Fix \(\delta = \delta' - \epsilon>0\). For \(r \in (\delta - \epsilon, \delta + \epsilon)\), we have
  \(\phi_{\cE_{j}}(r) \in (j, j + 1)\)
  and at \(r = \delta\), 
  \[\abs{\phi_{\cE_{j + 1}}(r) - \phi_{\cE_{j}}(r)} < 1,\]
  while for \(r > \delta' + \epsilon\),
  \[\abs{\phi_{\cE_{j + 1}}(r) - \phi_{\cE_{j}}(r)} > 1.\]
  Consequently  \(\lceil \phi_{\cE_{j}}(r) \rceil < \phi_{\cE_{j + 1}}(r)\) for all \(r\in[\delta, \infty)\). \Cref{prop:7} therefore produces a path $\sigma_{\bullet}$, which by the same reasoning as in \Cref{thm:tncmmp} is a quasi-convergent.

 Finally, for a fixed weight \(\omega\), the subcategory \(\langle \cE_{0}\otimes \mathbf{1}^{\omega}, \cE_{1}\otimes \mathbf{1}^{\omega}, \cE_{2}\otimes \mathbf{1}^{\omega}\rangle\) is isomorphic to \(D^{b}(\bP^{2})\). By \cite[Proposition 2.5]{Li2017}, we conclude that the stability condition at \(r = \delta\) is geometric.
\end{proof}

\section{Relation with birational minimal model program}
\label{sec:relat-with-birat}
In this section, we explain how \Cref{conjecture:1} and \Cref{proposal:main} extend certain non-equivariant birational results to the \(G\)-equivariant setting for finite groups, relating to the \(D\)-equivalence conjecture and Dubrovin conjecture, based on \cite{halpernleistner2024noncommutativeminimalmodelprogram}.

\subsection{Minimal models for finite groups}
We begin with a \(G\)-equivariant analogue of \cite[Theorem 3.1]{KO2018}.
\begin{lemma}
  \label{lem:2}
  Let \(G\) be a finite group acting on a smooth projective algebraic variety \(X\) and let \(D^{b}_{G}(X) = \langle \cA, \cB \rangle\) be a \(G\)-semiorthogonal decomposition. Denote by \(B = {\rm Bs} \abs{\omega_{X}}^{G}\) the base locus of the \(G\)-invariant part of \(H^{0}(X, \omega_{X})\). For a closed point \(x \in X\), set \(E_{x} := \oplus_{g \in G} g^{*} \cO_{x}\) with its natural \(G\)-action. Then one of the following holds,
    \begin{enumerate}
      \item For every closed point \(x \in X \backslash B\), \(E_{x} \in \cA\). Moreover, the support of any object of \(\cB\) is contained in \(B\).
      \item For every closed point \(x \in X \backslash B\), \(E_{x} \in \cB\). Moreover, the support of any object of \(\cA\) is contained in \(B\).
    \end{enumerate}
\end{lemma}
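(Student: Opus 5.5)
We adapt the proof of \cite[Theorem 3.1]{KO2018} to the equivariant category, using the Serre functor of $D^b_G(X)$. For $G$ finite this Serre functor is $S=(-)\otimes\omega_X[\dim X]$, with $\omega_X$ carrying its canonical $G$-linearisation. The key input is a $G$-invariant section $s\in H^0(X,\omega_X)^G$. Viewed as a $G$-equivariant morphism $s\colon\cO_X\to\omega_X$, it gives, for every $\cF\in D^b_G(X)$, a natural morphism $s_\cF\colon\cF\to\cF\otimes\omega_X$ in $D^b_G(X)$. Invariance of $s$ is exactly what makes $s_\cF$ a morphism of equivariant objects.

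First we show each $E_x$ lies in $\cA$ or in $\cB$ when $x\notin B$. Take the triangle $\beta\to E_x\to\alpha\to\beta[1]$ with $\beta\in\cB$, $\alpha\in\cA$. The object $E_x$ is a (equivariant) sheaf supported on the finite orbit $Gx$, and $\mathrm{Res}(E_x)$ is a direct sum of skyscrapers. As in \cite{KO2018}, the map $\alpha\to\beta[1]$ lies in $\Hom(\cA,\cB[1])$. Using Serre duality, $\Hom(\cA,\cB[1])$ is dual to $\Hom(\cB,S\cA[-1])$. The right orthogonality then forces the connecting morphism to be controlled by $\Hom(\beta,\alpha\otimes\omega_X[\dim X])$.

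Next choose $s\in H^0(\omega_X)^G$ with $s(x)\neq 0$. Such an $s$ exists since $x\notin B$, and then $s$ is nonvanishing on all of $Gx$ by invariance. Hence $s_{E_x}\colon E_x\to E_x\otimes\omega_X$ is an isomorphism. By naturality we get commuting squares comparing the decomposition triangle with its twist by $\omega_X$. The projections of $s_\beta$ and $s_\alpha$ must be compatible with the semiorthogonality $\Hom(\cB,\cA)=0$. Following \cite{KO2018}, one deduces that $\Hom(E_x,E_x)$ splits into pieces from $\alpha$ and $\beta$.

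Simplicity gives the conclusion. Here $\End_G(E_x)\cong\End_{G_x}(\cO_x)$ is the algebra of $G_x$-invariants, which for the natural action equals $\k$. So $E_x$ is indecomposable in a strong sense, and one of $\alpha,\beta$ vanishes. This is the main obstacle: making the ``idempotent splitting'' step rigorous equivariantly. The obstacle is compounded because the natural $G$-structure on $\oplus_g g^*\cO_x$ over-counts when $G_x\ne 1$ (one should really use $P_x$). I would first replace $E_x$ by $P_x$ from Definition~\ref{def:geometric stab for stack}, noting $E_x\cong P_x^{\oplus |G_x|}$ up to a representation twist, and argue with $P_x$, for which $\Hom^0_G(P_x,P_x)=\k$.

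Then comes connectedness. The locus of $x\in X\setminus B$ with $E_x\in\cA$ and the locus with $E_x\in\cB$ are both open, by semicontinuity of $\Hom(\cB\text{-generators},E_x)$. They are also disjoint, since $\cA\cap\cB=0$. As $X\setminus B$ is irreducible (assuming $X$ connected) and nonempty, only one of them is nonempty. If it is empty, $B=X$ and both statements hold vacuously.

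Finally comes the support statement. Suppose $E_x\in\cA$ for all $x\notin B$, and let $\cF\in\cB$. Then $\Hom^\bullet_G(\cF,E_x)=0$ for every such $x$. Restricting and using Frobenius reciprocity, $\Hom^\bullet(\mathrm{Res}\,\cF,\cO_x)=0$, hence $x\notin\operatorname{supp}\cF$. The other case is symmetric, using $\Hom(E_x,\cA)$ together with Serre duality.
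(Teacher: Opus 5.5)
The overall strategy is fine: you follow Kawatani--Okawa, your support step (Frobenius reciprocity plus Serre duality) is the paper's, and your connectedness step works. For the latter, note that $\{x: E_x\in\cA\}$ and $\{x: E_x\in\cB\}$ are the complements of $\operatorname{Supp}\mathrm{Res}(b)$ and $\operatorname{Supp}\mathrm{Res}(a)$ for classical generators $b$ of $\cB$ and $a$ of $\cA$, hence open; this is a valid substitute for the paper's decomposition of $\cO_X$.

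The gap is in the central step that places $E_x$ in $\cA$ or $\cB$. First, your Serre duality targets the wrong space. The space $\Hom(\beta,\alpha\otimes\omega_X[\dim X])\cong\Hom(\alpha,\beta)^\vee$ need not vanish, and neither need $\Hom(\cB,S\cA[-1])$. What drives the argument is
\[
\Hom(\alpha,\beta\otimes\omega_X[1])\cong\Hom(\beta,\alpha[\dim X-1])^\vee=0,
\]
that is, $s_\beta[1]\circ f=0$ for the connecting morphism $f\colon\alpha\to\beta[1]$. Second, $s_\beta$ is invertible only on the $G$-invariant open set $U=X\setminus Z(s)$. So you only get $f|_U=0$ and a splitting $E_x\cong\beta|_U\oplus\alpha|_U$ in $D^b_G(U)$. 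Naturality squares together with $s_{E_x}$ being an isomorphism do not split the global triangle. After indecomposability kills, say, $\alpha|_U$, you still need the globalisation step the paper carries out. Since $\alpha$ is supported on $Z(s)$, which misses $Gx$, you get $\Hom(E_x,\alpha)=0$. The triangle then gives $\beta\cong E_x\oplus\alpha[-1]$, and $\Hom(\beta,\alpha[-1])=0$ forces $\alpha=0$; the case $\beta|_U=0$ is symmetric. Your phrase ``simplicity gives the conclusion'' skips both of these points.

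The $G_x\neq 1$ issue you flag is real, and passing to $P_x$ does not close it. In fact $\End_G(E_x)\cong\k[G_x]$, not $\k$, and
\[
E_x\cong\bigoplus_{\rho\in\mathrm{Irr}(G_x)}\mathrm{Ind}^G_{G_x}(\cO_x\otimes\rho)^{\oplus\dim\rho},
\]
not $P_x^{\oplus|G_x|}$ up to a twist. Running the argument on $P_x$, or on each indecomposable summand, places each $\mathrm{Ind}^G_{G_x}(\cO_x\otimes\rho)$ in $\cA$ or $\cB$ separately. Nothing in your argument forces them all into the same component. That requires the components to be closed under $-\otimes V$ for $V\in\Rep(G)$, which is what the paper's remark that $\cA,\cB$ are ``closed under the $G$-action'' supplies (the ``$G$-'' in $G$-semiorthogonal). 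With that hypothesis, $P_x\otimes\k[G]\cong E_x^{\oplus[G:G_x]}$, so $P_x\in\cA$ gives $E_x\in\cA$.

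Without this hypothesis the statement is false. Take the trivial action of $\bZ/2$ on $X$ with $\omega_X\cong\cO_X$, so $B=\emptyset$. Then $D^b_G(X)=\langle D^b(X)\otimes\chi_+,\,D^b(X)\otimes\chi_-\rangle$, while $E_x\cong\cO_x\otimes\chi_+\oplus\cO_x\otimes\chi_-$ lies in neither component, and objects of both components have full support.
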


\begin{proof}
  Take \(x \in X\backslash B\), then it is clear that \(gx \in X \backslash B\) for all $g\in G$. From the definition of semiorthogonal decomposition, there is a distinguished triangle
  \[F \to E_{x} \to G \xrightarrow{f} F[1]\]
  with \(F \in \cB\) and \(G \in \cA\). Choose a section \(s \in H^{0}(X, \omega_{X})^{G}\) that does not vanish at \(x\). By $G$-invariance, $s$ also does not vanish at each \(gx\). Set \(U = X \backslash Z(s)\). If \(f|_{U} \neq 0\), then \((\otimes s \circ f) |_{U} = \otimes s|_{U} \circ f_{U}\) is also non-zero. However, the \(G\)-equivariant Serre duality gives
  \[\Hom(G, F \otimes \omega_{X}[1]) \cong \Hom(G, F[\dim X - 1])^{\vee} = 0.\]
  This forces \(f|_{U} = 0\) and consequently we obtain a splitting \(E_{x} = F|_{U} \oplus G|_{U}\). Since both \(\cA\) and \(\cB\) are closed under the \(G\)-action, either \(F|_{U} = 0\) or \(G|_{U} = 0\). If \(G|_{U} = 0\), the morphism \(E_{x} \to G|_{U}\) is zero and the triangle yields \(F \cong E_{x} \oplus G[-1]\). By semiorthogonality, \(G = 0\) and thus \(E_{x} \in \cB\). Similarly, if \(F|_{U} = 0\), we obtain \(E_{x} \in \cA\).

  Now suppose \(E_{x} \in \cB\) (resp. \(E_{x} \in \cA\)). For any object \(F \in \cA\) (resp. \(F \in \cB\)), it implies that \(x \not \in {\rm Supp} F\). Therefore, the support of \(F\) is a proper closed subset of \(X\). Now consider the distinguished triangle of structure sheaf with natural \(G\)-structure
  \[F \to \cO_{x} \to G \to F[1].\]
  Since \({\rm Supp} \cO_{X} = {\rm Supp} F \cup {\rm Supp} G\), we must have either \({\rm Supp} F = X\) or \({\rm Supp} G = X\). By the support argument above, it follows that all \(E_{x}\) with \(x \in X \backslash B\) belong to the same component ($\cA$ or $\cB$), and the support of every object of the other component is contained in \(B\). This completes the proof.
\end{proof}

\begin{proposition}
  \label{prop:1}
  Let \(G\) be a finite group and \(\pi: X \to Y\) a \(G\)-equivariant contraction of projective varieties with \(X\) smooth and \(p_{g}^{G}(X)\coloneq\dim H^{0}(X, \omega_{X})^{G} > 0\). Assume that \(X\) admits \(G\)-invariant stability condition and that \Cref{conjecture:1}(A, D) holds for varieties over \(Y\). Then there exists an admissible subcategory \(\cM_{X/Y} \subset D^{b}_{G}(X)\) containing an object whose support is \(X\) with the following property:

  For any other contraction \(X' \to Y\) such that \(X'\) is \(G\)-birationally equivalent to \(X\) over \(Y\), there is an admissible embedding \(\cM_{X/Y} \hookrightarrow D^{b}_{G}(X')\).

  If in addition \Cref{conjecture:1}(B) holds for varieties over \(Y\), then \(\cM_{X/Y}\) carries a canonical \(\Perf(Y)^{\otimes}\)-module structure and the embeddings \(\cM_{X/Y} \hookrightarrow D^{b}(X')\) are \(\Perf(Y)^{\otimes}\)-linear.
\end{proposition}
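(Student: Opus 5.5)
We follow the strategy of \cite[Theorem 3.1 and its corollaries]{halpernleistner2024noncommutativeminimalmodelprogram}, replacing each non-equivariant input by its $G$-equivariant counterpart. The idea is to take the factor of a canonical semiorthogonal decomposition that contains the $G$-orbit sums $E_x$ of generic points, and to show that it reappears in every $G$-birational model.

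\textbf{Step 1 (defining $\cM_{X/Y}$).} By \Cref{conjecture:1}(A), the contraction $\pi$ determines, for a generic parameter $\psi$, a quasi-convergent path in $\Stab(X_G)/\mathbb C$. By \Cref{SODsfromQuasipath}, this path gives a $G$-linear semiorthogonal decomposition
\[
D^b_G(X)=\langle\cD_1,\dots,\cD_n\rangle .
\]
We apply \Cref{lem:2} repeatedly to the two-term decompositions $\langle\langle\cD_1,\dots,\cD_{i}\rangle,\langle\cD_{i+1},\dots,\cD_n\rangle\rangle$. Since $p_g^G(X)>0$, the locus $B$ is a proper closed subset. Hence there is a unique index $i_0$ such that $E_x\in\cD_{i_0}$ for every $x\in X\setminus B$, and every object of every other factor is supported in $B$. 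We set $\cM_{X/Y}:=\cD_{i_0}$. It is admissible because it is a factor of a semiorthogonal decomposition of the smooth proper category $D^b_G(X)$. Its objects $E_x$ have support $X$ after we take a suitable extension, and more simply $\cO_X$ has a nonzero component in $\cD_{i_0}$ with full support.

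\textbf{Step 2 (embedding into $X'$).} Let $X'\to Y$ be $G$-birational to $X$ over $Y$. Take a $G$-equivariant resolution of the graph, namely a smooth $\widetilde X$ with $G$-equivariant birational morphisms $p\colon\widetilde X\to X$ and $p'\colon\widetilde X\to X'$ over $Y$. Equivariant resolution exists for finite $G$. Both maps satisfy $Rp_\ast\cO=\cO$. Apply \Cref{conjecture:1}(D) to $p$ and to $p'$. The decomposition of $D^b_G(\widetilde X)$ attached to $\widetilde X\to Y$ refines both $\langle\ker p_\ast,p^\ast D^b_G(X)\rangle$, with $p^\ast D^b_G(X)$ carrying the decomposition of $X\to Y$, and the analogous decomposition for $X'$. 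Let $\widetilde\cM$ be the unique factor containing the $E_{\tilde x}$ for generic $\tilde x$; it is unique by \Cref{lem:2} on $\widetilde X$, since $p_g^G$ is a $G$-birational invariant. Objects of $\ker p_\ast$ are supported on the exceptional locus. Therefore $\widetilde\cM$ lies in $p^\ast D^b_G(X)$ and corresponds to a factor of $X$'s decomposition containing $E_x$ for generic $x$, namely $p^\ast\cM_{X/Y}$. The same reasoning applied to $p'$ gives $\widetilde\cM\subset p'^\ast D^b_G(X')$. This yields an admissible embedding
\[
\cM_{X/Y}\simeq\widetilde\cM\hookrightarrow p'^\ast D^b_G(X')\simeq D^b_G(X').
\]

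\textbf{Step 3 (linearity).} Under \Cref{conjecture:1}(B), the factors are closed under tensoring with $\pi^\ast\Perf_G(Y)$, which gives the module structure. The pullbacks $p^\ast$ and $p'^\ast$ are monoidal over $Y$ by the projection formula, so the embedding in Step 2 is linear.

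\textbf{Main obstacle.} The delicate point is the identification in Step 2 of the generic factor of $\widetilde X$ with the pullback of the generic factor of $X$. Refinement only says that $\widetilde\cM$ is a factor of some refinement, and exceptional pieces could a priori be interleaved with it. We plan to handle this with the support dichotomy of \Cref{lem:2}. Because of that dichotomy, a refinement cannot split the generic factor, and no exceptional piece can contain the $E_{\tilde x}$. One also needs a single parameter $\psi$ that is "suitable" for both $p$ and $p'$ simultaneously; we would choose $\psi$ generic in the intersection of the two open parameter sets.
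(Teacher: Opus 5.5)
\textbf{There is a genuine gap in Step 2, at the point you flag as the main obstacle, and your proposed fix does not close it.}

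\Cref{conjecture:1}(D) only says that the decomposition of $D^b_G(\widetilde X)$ attached to $\widetilde X\to Y$ refines $\langle \ker p_\ast, p^\ast\cD_1,\dots,p^\ast\cD_n\rangle$. This gives an admissible inclusion $\widetilde\cM\subseteq p^\ast\cM_{X/Y}$. It does not give the equality $\widetilde\cM=p^\ast\cM_{X/Y}$ that you use.

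\Cref{lem:2} cannot upgrade the inclusion to an equality. Applied to any coarsening of the refinement of $p^\ast\cM_{X/Y}$, it says only that the pieces not containing the generic $E_{\tilde x}$ consist of objects supported in $\widetilde B=\mathrm{Bs}|\omega_{\widetilde X}|^G$. It does not say these pieces vanish. Vanishing would need $\widetilde B=\emptyset$, which is the base-point-freeness hypothesis of \Cref{cor:D-equivalence conj}. That hypothesis is unavailable here, and it fails whenever $p$ is not an isomorphism: every $p$-exceptional divisor has positive discrepancy, so it lies in the fixed part of $|K_{\widetilde X}|$. So with your definition $\cM_{X/Y}=\cD_{i_0}$ you only obtain $\widetilde\cM\hookrightarrow D^b_G(X')$, and $\cD_{i_0}$ itself need not embed into $D^b_G(X')$.

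\textbf{The missing idea is to take a minimal generic factor rather than the one on $X$; this is how the paper argues.}

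Write $\cC_W$ for the factor containing a full-support object in the decomposition attached to a smooth model $W\to Y$. For a $G$-equivariant resolution $f\colon Z\to X$ and any further $q\colon Z'\to Z$, (D) gives admissible inclusions $\cC_{Z'}\subseteq q^\ast\cC_Z$ and $\cC_Z\subseteq f^\ast\cC_X$.

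The charge lattice of $\cC_{Z'}$ is a direct summand of that of $\cC_Z$. By polarizability, any nonzero complementary piece has a nonzero charge lattice. Hence the finite, positive rank drops whenever such an inclusion is proper.

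Choose $Z$ of minimal rank. Then $\cC_{Z'}=\cC_Z$ for every $Z'\to Z$. Set $\cM_{X/Y}:=\cC_Z$; it is admissible in $D^b_G(X)$ via $f^\ast$ and contains a full-support object.

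Given $X'\to Y$, choose a smooth $G$-equivariant $Z'$ dominating both $Z$ and $X'$ over $Y$. This gives admissible inclusions $\cM_{X/Y}=\cC_{Z'}\subseteq\cC_{X'}\subseteq D^b_G(X')$.

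Your support argument in Step 1, the use of equivariant resolutions, and Step 3 carry over once $\cM_{X/Y}$ is redefined this way.
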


\begin{proof}
  Let \(D^{b}(X) = \langle \cC_{1}, \cdots, \cC_{n} \rangle\) be the semiorthogonal decomposition assined to $\pi$ by \Cref{conjecture:1}(A) for a generic choice of parameter \(\psi\). Because \(p_{g}^{G}(X) > 0\), it follows from \Cref{lem:2} that exactly one of the \(\cC_{i}\) contains an object whose support is \(X\). Denote this category by \(\cC_{X, \psi}\). Since different generic choice of \(\psi\) gives mutation-equivalent semiorthogonal decompositions, the isomorphism class of \(\cC_{X, \psi}\) is independent of $\psi$. We write it simply as \(\cC_{X}\). If \Cref{conjecture:1}(B) holds, then \(\cC_{X}\) inherits a \(\Perf(Y)^{\otimes}\)-structure.

  Now let \(f: Z \to X\) be a projective \(G\)-birational morphism with \(Z\)-smooth. Associate  to \(f \circ \pi\) the category \(\cC_{Z}\). By \Cref{conjecture:1}(D), we have \(\cC_{Z} \cong \cC_{X}\) as admissible subcategories. Since \(\cC_{Z}\) corresponds to a direct sum of charge lattices of \(\cC_{X}\), which are finite dimensional, there exists a birational morphism \(Z \to X\) such that for any further \(G\)-birational morphism \(Z' \to Z \to X\), we have \(\cC_{Z} \cong \cC_{Z'}\). Take another \(G\)-contraction \(X' \to Y\) birational to \(\pi: X \to Y\). There exists a smooth projective \(Z'\) with \(G\)-birational maps \(G' \to Z\) and \(G' \to X'\) that are compatible with the birational equivalence over \(Y\). It follows that \(\cC_{Z} = \cC_{Z'} \subset \cC_{X'} \subset D^{b}_{G}(X')\) are admissible inclusions.
\end{proof}

\begin{corollary}
  \label{cor:D-equivalence conj}
  Let \(G\) be a finite group and assume that \Cref{conjecture:1}(A, D) holds for varieties over \(\spec \k\), if \(X\) and \(X'\) are birationally equivalent smooth projective varieties and the \(G\)-invariant of linear system \(\abs{K_{X}}\) is base-point free. Then there is a canonical admissible embedding \(D^{b}(X) \hookrightarrow D^{b}(X')\), which is an equivalence if \(G\)-invariant part of \(\abs{K_{X'}}\) is also base-point free.
\end{corollary}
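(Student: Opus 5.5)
The plan is to deduce the corollary from \Cref{prop:1} applied with $Y=\spec\k$, and to identify the category $\cM_{X/\spec\k}$ with the whole of $D^b_G(X)$ by means of \Cref{lem:2}. The base-point-freeness hypothesis forces $p_g^G(X)>0$, since a base-point-free linear system must contain a nonzero section. So \Cref{prop:1} applies to the contraction $X\to\spec\k$. It produces an admissible subcategory $\cM_X\subset D^b_G(X)$ that contains an object with full support. It also produces admissible embeddings $\cM_X\hookrightarrow D^b_G(X')$ for every smooth projective $X'$ that is $G$-birational to $X$.

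The main step is to show that $\cM_X=D^b_G(X)$. Recall from the proof of \Cref{prop:1} that $\cM_X=\cC_X$ is the unique factor of the semiorthogonal decomposition $D^b_G(X)=\langle\cC_1,\dots,\cC_n\rangle$ that contains an object of full support. Group the decomposition as $\langle\cA,\cB\rangle$, with one side equal to $\cC_X$ and the other side generated by the remaining factors; this is possible for either position of $\cC_X$ by coarsening. Now apply \Cref{lem:2} with $B=\mathrm{Bs}\,|\omega_X|^G=\emptyset$. It says that all $E_x$ lie in one side and that every object of the other side has support contained in $B=\emptyset$, so that side is zero. The side that survives is the one containing an object of full support, namely $\cC_X$. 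Iterating over the factors gives $\cC_X=D^b_G(X)$.

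This yields an admissible embedding $D^b_G(X)\hookrightarrow D^b_G(X')$. Canonicity should follow from the independence of $\cC_X$ from the choice of $\psi$ asserted in \Cref{conjecture:1}(A), together with the stabilization argument of \Cref{prop:1}. The statement is written with $D^b(X)$, but I read it as the $G$-equivariant categories.

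For the equivalence, swap the roles of $X$ and $X'$. The same argument gives $\cM_{X'}=D^b_G(X')$ and an admissible embedding $D^b_G(X')\hookrightarrow D^b_G(X)$. To conclude, note that both embeddings are built from the common category $\cC_Z$ on a sufficiently high $G$-equivariant resolution $Z$ that dominates both $X$ and $X'$. Hence the images are identified: $\cC_Z\cong\cC_X=D^b_G(X)$ and $\cC_Z\cong\cC_{X'}=D^b_G(X')$, so the admissible embedding is essentially surjective.

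I expect the main obstacle to be this last compatibility. One has to check that the embedding from \Cref{prop:1} really lands onto $\cC_{X'}$, and not merely onto some admissible subcategory of it. I would first try to extract this from \Cref{conjecture:1}(D), which says that the decomposition for $Z$ refines the pullbacks of the decompositions for $X$ and $X'$. A fallback is a Serre-functor argument: admissible subcategories that are mutually embedded in smooth proper categories, with the composite embedding an admissible endo-embedding of $D^b_G(X)$, must coincide, for example by comparing the ranks of numerical Grothendieck groups (Hochschild homology).
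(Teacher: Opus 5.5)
\textbf{Gap: the identification $\cM_{X/\spec\k}=\cC_X$.} Your use of \Cref{lem:2} is correct: it shows that $D^b_G(X)$ is indecomposable, so the densely supported factor $\cC_X$ equals $D^b_G(X)$. But the proof breaks at the step $\cM_{X/\spec\k}=\cC_X$.

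In the proof of \Cref{prop:1}, the category that actually embeds into $D^b_G(X')$ is not $\cC_X$. It is the stabilized factor $\cC_Z$ attached to a sufficiently high $G$-birational model $f\colon Z\to X$, via $\cC_Z=\cC_{Z'}\subset\cC_{X'}\subset D^b_G(X')$. In general, \Cref{conjecture:1}(D) only gives an admissible inclusion $\cC_Z\subset f^{*}\cC_X$, which may be proper. The line ``$\cC_Z\cong\cC_X$'' in that proof cannot be meant literally, or the stabilization step would be vacuous. So knowing $\cC_X=D^b_G(X)$ tells you nothing yet about $\cM_{X/\spec\k}$. Your last paragraph likewise assumes $\cC_Z\cong\cC_X$ without proof.

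The missing argument is exactly the paper's proof:
\begin{enumerate}
\item Take any $G$-birational $f\colon Z\to X$ with $Z$ smooth projective. By (D), the decomposition of $D^b_G(Z)$ refines $\langle\ker Rf_{*},f^{*}D^b_G(X)\rangle$, because the decomposition attached to $X$ is trivial.
\item The factors lying in $f^{*}D^b_G(X)\cong D^b_G(X)$ form a semiorthogonal decomposition of an indecomposable category. So there is exactly one such factor, and it equals $f^{*}D^b_G(X)$.
\item This factor contains $\cO_Z$, which has full support. Objects of $\ker Rf_{*}$ are supported on the exceptional locus. Hence this factor is $\cC_Z$.
\end{enumerate}
Therefore $\cC_Z=f^{*}D^b_G(X)$ for every such $Z$, and the stabilized category is all of $D^b_G(X)$.

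\textbf{The equivalence.} The obstacle you flag is not real, and you need neither (D) nor Hochschild homology. Apply \Cref{lem:2} to $X'$: since the $G$-invariant part of $\lvert K_{X'}\rvert$ is base-point free, $D^b_G(X')$ has no nontrivial semiorthogonal decomposition. The image $\cA\neq0$ of the admissible embedding gives $D^b_G(X')=\langle\cA^{\perp},\cA\rangle$, so $\cA^{\perp}=0$.

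Your fallback of comparing ranks of $K_{\num}$ or Hochschild homology would not work on its own, because it cannot rule out a phantom orthogonal complement. Once the missing step above is in place, your common-resolution idea also works: $D^b_G(X)\cong\cC_Z\cong D^b_G(X')$ for any $Z$ dominating both.
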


\begin{proof}
  If \(G\)-invariant part of \(K_{X}\) is base-point free, then \(D^{b}_{G}(X)\) admits no \(G\)-semiorthogonal decompositions by \Cref{lem:2}. By \Cref{prop:1}, it remains to show that \(\cM_{X/\spec \k} = D^{b}_{G}(X)\). To see this, consider a \(G\)-birational morphism \(f: Z \to X\) with \(Z\) smooth and projective. Then by \Cref{conjecture:1}(D), \(\cC_{Z}\) densely supported must lie in \(f^{*}(D^{b}_{G}(X))\) and must be equal to \(f^{*}(D^{b}_{G}(X))\).
\end{proof}

In particular, the above corollary implies that \(G\)-birational equivalent Calabi-Yau varieties \(X\) and \(X'\) satisfies \(D^{b}_{G}(X) \cong D^{b}_{G}(X')\).

\subsection{$G$-equivariant Dubrovin's conjectures}

\begin{lemma}
  \label{lem:1}
  Let \(G\) be a finite group and let \(\cC\) a regular proper idempotent complete pre-triangulated dg-\(G\)-category such that \(K(\cC) = K(\Rep(G))\) and $\cC$ admits a \(G\)-invariant stability conditions. Then \(\cC\) is generated by a single exceptional object, i.e. \(\cC = \Rep(G)\).
\end{lemma}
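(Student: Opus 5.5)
The plan is to reduce the statement to its non-equivariant analogue in \cite{halpernleistner2024noncommutativeminimalmodelprogram}, namely that a regular proper idempotent complete category with $K(\cC)\cong\mathbb Z$ admitting a stability condition is generated by one exceptional object. I read the hypotheses as saying that $\cC$ is the equivariantization $\cC=\cD_G$ of an underlying $G$-category $\cD$, and that the $G$-invariant stability condition lives on $\cD$. The transfer tool is \Cref{thm:induce stab for finite groups}, together with the behaviour of $\mathrm{Res}$ and $\mathrm{Ind}$.

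\textbf{Step 1: pass to an induced stability condition on $\cC$.} Take a $G$-invariant stability condition $\sigma\in\Stab(\cD)^G$. By \Cref{thm:induce stab for finite groups}, $\mathrm{Res}^{-1}(\sigma)$ is a stability condition on $\cC$. Since $K(\cC)=K(\Rep(G))\cong\mathbb Z^{\oplus r}$, with $r=|\mathrm{Irr}(G)|$, its central charge is determined by the $r$ values it takes on the basis of irreducible classes.

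\textbf{Step 2: reduce to the underlying category.} From $\mathrm{Res}\circ\mathrm{Ind}\cong\bigoplus_{g}\phi_g$ and $\mathrm{Ind}\circ\mathrm{Res}\cong(-)\otimes\k\langle G\rangle$, I would deduce that $K(\cD)$ has rank one, generated by the class of $\mathrm{Res}$ of a generator of $\cC$. The same relations should show that $\cD$ inherits the properties of being regular, proper and idempotent complete; for idempotent completeness I would use that $|G|$ is invertible, so that every object is a direct summand of $\mathrm{Res}\,\mathrm{Ind}$ of itself.

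\textbf{Step 3: apply the non-equivariant result.} With $\cD$ regular, proper, idempotent complete, of Grothendieck group rank one, and carrying the stability condition $\sigma$, the non-equivariant statement gives $\cD=\langle E\rangle$ for an exceptional object $E$, so $\cD\simeq D^b(\k)$. The autoequivalences $\phi_g$ preserve the exceptional object $E$ up to shift. Because $\sigma$ is $G$-invariant and $Z(E)\neq0$, no nontrivial shift can occur, so $\phi_g E\cong E$ for every $g$.

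\textbf{Step 4: identify $\cC$.} The $G$-action on $D^b(\k)$ is then given by a class in $H^2(G,\k^\ast)$, so $\cC$ is the derived category of $\alpha$-twisted representations of $G$. To see that this class is trivial, I would use $K(\cC)=K(\Rep(G))$ together with a count of irreducibles, since the $\alpha$-twisted irreducibles are in general fewer than the ordinary ones. Once the cocycle is trivial, $E$ carries a linearization, the objects $E\otimes\rho$ for $\rho\in\mathrm{Irr}(G)$ form a completely orthogonal exceptional collection, and $\cC\simeq D^b(\Rep(G))$, as claimed.

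The main obstacle is Step 2. It requires knowing that $\cC$ arises as $\cD_G$, and that regularity and the rank-one condition descend from $\cC$ to $\cD$. The rank computation is the delicate point. My first attempt would be to work with $K(-)\otimes\mathbb Q$, where $K(\cC)_\mathbb Q\cong K(\cD)_\mathbb Q^{G}\otimes$(isotypic data), and extract the rank via the character of the regular representation. If this descent cannot be made to work, the fallback is to argue directly on $\cC$. In that approach, each stable object of minimal mass in $\mathrm{Res}^{-1}(\sigma)$ would be shown exceptional, because its restriction to $\cD$ is semistable with a one-dimensional class, and one would then invoke the gluing recovery in \Cref{SODsfromQuasipath}.
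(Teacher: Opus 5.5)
Your Step 2 is a genuine gap, and Step 3 cannot start without it: the rank-one lemma of \cite{halpernleistner2024noncommutativeminimalmodelprogram} applies to $\cD$ only once $K(\cD)$ has rank one. The identities $\mathrm{Res}\circ\mathrm{Ind}\cong\bigoplus_g\phi_g$ and $\mathrm{Ind}\circ\mathrm{Res}\cong(-)\otimes\k\langle G\rangle$ cannot give this, because they only see $G$-invariants. Since $\mathrm{Res}(x\otimes V)\cong\mathrm{Res}(x)^{\oplus\dim V}$, the map $\mathrm{Res}$ lands in $K(\cD)^G$ and factors through $K(\cC)\otimes_{R(G)}\bZ$. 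Since $\mathrm{Ind}\circ\phi_g\cong\mathrm{Ind}$, the map $\mathrm{Ind}$ factors through the coinvariants. Together with $K(\cC)\cong R(G)$, all you obtain is $\mathrm{rk}\,K(\cD)^G\le 1$; the non-invariant part of $K(\cD)_{\bQ}$ is invisible to both functors. For example, for $\bZ/2$ swapping the summands of $D^b(\k)^{\oplus 2}$, the same identities hold, and $[S_1]-[S_2]$ is killed by $\mathrm{Ind}$ and lies outside the image of $\mathrm{Res}$. Your heuristic $K(\cC)_{\bQ}\cong K(\cD)^G_{\bQ}\otimes(\text{isotypic data})$ also fails. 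That swap and the trivial action on $D^b(\k)$ both have $K(\cD)^G_{\bQ}=\bQ$, yet $K(\cC)$ is $\bZ$ (with $R(G)$ acting through $\dim$) in the first case and $R(G)$ in the second. Ruling out a non-invariant part would need a twisted-sector or Clifford-type analysis, which you do not have for an abstract $\cD$. Your fallback does not escape the problem, since it again assumes classes in $\cD$ are one-dimensional. The paper never descends to $\cD$: it stays in $\cC$, splits it into the mutually orthogonal pieces $\langle E\otimes V_i\rangle$, $V_i\in\mathrm{Irr}(G)$, each with rank-one $K$-group, and applies the rank-one lemma to each piece with the restricted stability condition.

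Step 4 has a second gap: counting irreducibles does not detect the cocycle. The number of irreducible $\alpha$-projective representations equals the number of $\alpha$-regular classes. If $\alpha$ restricts trivially to every abelian subgroup, i.e.\ is a nonzero element of the Bogomolov multiplier (these exist for suitable $p$-groups), then every element is $\alpha$-regular. In that case $D^b(\Rep^{\alpha}G)$ has exactly $|\mathrm{Irr}(G)|$ simple objects, orthonormal for the Euler form, yet no object $E$ satisfies $\Hom^{\bullet}_G(E,E)\cong\mathbb C_G$. So you must use that $K(\cC)$ is free of rank one over $R(G)$, which is how the hypothesis enters the Dubrovin-type application ($\dim_{R(G)}\cC_i=1$). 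One way: let $x$ be a free generator. The Gram matrix of $\{x\otimes V_i\}$ is the matrix of multiplication by $x\otimes x^\vee$ on $R(G)$, with determinant $\prod_{[g]}|\chi_x(g)|^2$. Unimodularity forces each factor, an algebraic integer, to be a unit, so $(\dim x)^2=1$. This is impossible for $[\alpha]\neq0$, because the order of $[\alpha]$ divides the dimension of every $\alpha$-projective representation.
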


\begin{proof}
  Let \(E\) be an indecomposable object in \(\cC\) and let \(\{V_{i}\}\) be the irreducible representation of \(G\). For each $i$, let \(\cD_{i}\) be the subcategory of \(\cC\) generated by \(E \otimes V_{i}\). Since \(\Hom(E, E) = \bC\) (trivial representation of \(G\)), we have \(\Hom(\cD_{i}, \cD_{j}) = 0\) for \(i \neq j\) . Hence there is an injection
  \[\oplus \cD_{i} \to \cC\]
  Since \(K(\Rep(G)) = K(\cC)\), this injection is an isomorphism and each $\cD_i$ satisfies \(\dim K(\cD_{i}) \otimes \bQ = 1\). Restriction the given $G$-invariant stability condition to \(\cD_{i}\) yields a stability condition on $\cD_i$. By \cite[Lemma 23]{halpernleistner2024noncommutativeminimalmodelprogram}, each \(\cD_{i}\) is then generated by an exceptional object. We may assume from the begining that \(E\) itself is exceptional. Consequently, \(\cC\) is generated by \(E\) and \(\cC = E \otimes \Rep(G) \cong \Rep(G)\).
\end{proof}

\begin{proposition}
  \label{pro:Dubrovin's conj}
  Let \(G\) be a finite group and \(X\) a smooth projective \(G\)-variety for which \Cref{proposal:main} holds for a generic parameter \(z\). Assume moreover that
  
  \begin{enumerate}
    \item \(\ch_G: K^G(X) \otimes \bQ \to H^{*}_{G}(X; \bQ)\) is an isomorphism, and 
    \item there exists a \(\psi\) such that \(E_{\psi}(1) \in \End(H^{*}_{\rm{alg}}(X) \otimes \bC)\) is semisimple with distinct eigenvalues. 
  \end{enumerate}
  Then \(D^{b}_{G}(X)\) admits a \(G\)-full exceptional collection consisting of limit semistable objects.
\end{proposition}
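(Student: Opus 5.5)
The plan follows the non-equivariant argument of \cite{halpernleistner2024noncommutativeminimalmodelprogram} for the corresponding Dubrovin-type statement, with \Cref{lem:1} in the role of \cite[Lemma 23]{halpernleistner2024noncommutativeminimalmodelprogram}.

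First, I will use \Cref{proposal:main} for the trivial contraction $X\to\mathrm{Spec}\,\k$ and the chosen $\psi$. This gives a quasi-convergent path $\sigma^G_t$ in $\Stab(X_G)$ whose central charge is $Z^G_t(\cE)=\int^{eq}_X\Phi^G_t(v^G(\cE))$. For finite $G$ the evaluation $\ev_\s$ is trivial, and by \Cref{lem:finite G-chomology} we have $H^\ast_G(X)=H^\ast(X)^G$. By \Cref{SODsfromQuasipath}, the path yields a semiorthogonal decomposition
\[
D^b_G(X)=\langle \cD^{E_1},\dots,\cD^{E_k}\rangle
\]
indexed by the $\sim^{\inf}$-classes of limit semistable objects.

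Next, I will identify the rank of each $K(\cD^{E_i})$. By hypothesis (2), $E_\psi(1)$ is semisimple with distinct eigenvalues, so Hukuhara--Turrittin gives $\Phi^G_t=Y^G(t)e^{tD^G}$ with $B^G=0$. The solution space therefore splits into one-dimensional exponential growth lines $e^{-t\lambda_j/z}$, and for generic $z$ these have pairwise distinct growth rates in the relevant direction. Using hypothesis (1), $v^G$ identifies $K^G(X)_{\mathbb C}$ with $H^\ast_G(X;\mathbb C)$. Limit semistable objects in a single class $\cD^{E_i}$ have $l_t$ differing by $o(t)$, so their charges all share one exponential rate. Hence $K(\cD^{E_i})_{\mathbb C}$ lies in a single growth line intersected with $H^\ast(X)^G$, on which $E_\psi$ acts.

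This is where I expect the \textbf{main obstacle}. I need to show that $K(\cD^{E_i})\otimes\bQ$ is exactly the span of $G$-isotypic pieces that look like $K(\Rep(G))$, rather than just a subspace of dimension $\le 1$. The subtlety is that $E_\psi(1)$ is stated on $H^\ast_{\rm alg}(X)$, while the equivariant lattice carries extra representation-theoretic multiplicity. My first attempt is as follows. The induced paths of \Cref{thm: inducing solutions} satisfy $\Res\circ\mathrm{Ind}$-compatibility, so each $\cD^{E_i}$ is stable under $-\otimes V$ for $V\in\Rep(G)$. Then $K(\cD^{E_i})$ is an $R(G)$-submodule, and matching ranks via the isomorphism $\ch_G$ forces $K(\cD^{E_i})\cong K(\Rep(G))$.

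Finally, I will apply \Cref{lem:1} to each factor. Each $\cD^{E_i}$ is admissible in a smooth proper category, so it is regular, proper and idempotent complete. It also inherits a $G$-invariant stability condition by restricting $\sigma^G_t$ for large $t$. \Cref{lem:1} then gives $\cD^{E_i}=\cE_i\otimes\Rep(G)$ with $\cE_i$ a $G$-exceptional object, and $\cE_i$ is limit semistable because it generates a single $\sim^{\inf}$-class. Ordering the $\cE_i$ by $<^{\inf}$, the semiorthogonality from \Cref{SODsfromQuasipath} gives $\Hom^\bullet_G(\cE_j,\cE_i)=0$ for $j>i$. So $(\cE_1,\dots,\cE_k)$ is a $G$-full exceptional collection of limit semistable objects.
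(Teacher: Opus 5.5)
Your outline matches the paper's: get the semiorthogonal decomposition from the quasi-convergent path, get one factor per eigenvalue, use $\ch_G$ to control the rank of each factor, then apply \Cref{lem:1}. The gap is exactly at the step you flag as the main obstacle, and your proposed fix does not close it.

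First, the proposition does not assume the path is the one produced by \Cref{thm: inducing solutions}. So stability of the factors under $-\otimes V$ is not available.

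Second, and more seriously, even granting that, the rank count goes the wrong way.
\begin{enumerate}
\item For finite $G$ one has $H^\ast_G(X;\bQ)=H^\ast(X;\bQ)^G$, and under this identification $\ch_G$ is the ordinary Chern character. Hence $\ch_G(\cE\otimes V)=(\dim V)\,\ch_G(\cE)$.
\item Injectivity of $\ch_G$ in hypothesis (1) then forces $[\cE\otimes V]=(\dim V)[\cE]$ in $K^G(X)_\bQ$. So $R(G)$ acts through $V\mapsto\dim V$, and being an $R(G)$-submodule imposes nothing.
\item Matching ranks therefore gives each factor rank one, as your own second step already says. But $K(\Rep(G))$ has rank $|\mathrm{Irr}(G)|$.
\end{enumerate}

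In fact, for $G\neq 1$ hypothesis (1) rules out $G$-exceptional objects altogether. Let $\cE$ be $G$-exceptional and $V$ a nontrivial irreducible representation. Then $\Hom^\bullet_{D^b_G(X)}(\cE,\cE\otimes V)=V^G=0$, yet $\chi(\cE,\cE\otimes V)=(\dim V)\chi(\cE,\cE)=\dim V\neq 0$. So no argument of this shape can produce $K(\cC_i)\cong K(\Rep(G))$ from (1) as stated. Any repair has to change (1) so that $[\cE\otimes V]$ is not forced to be proportional to $[\cE]$.

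The paper's proof does not supply the missing idea either. It passes from ``$\ch_G$ is an isomorphism'' directly to ``$\dim_{R(G)}\cC_i=1$'' and then invokes \Cref{lem:1}. That is precisely the step you isolated.

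There are two further places where your argument falls short of the paper's sketch.
\begin{enumerate}
\item Your claim that $K(\cD^{E_i})_{\bC}$ lies in a single eigenline does not follow from all limit semistable objects in the class sharing one dominant exponent. That only places their classes in the growth subspace $F^r_G$. Components along eigenvalues with smaller real part are subdominant and invisible in the asymptotics. The paper, following \cite{halpernleistner2024noncommutativeminimalmodelprogram}, instead uses the spanning condition of \Cref{def:spanning condition for finite groups} to get at least one factor per eigenvalue, and then counts ranks. You never invoke the spanning condition.
\item Restricting $\sigma^G_t$ to a factor does not give a stability condition on that factor, because the slicing at finite $t$ need not respect the semiorthogonal decomposition. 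Also, ``$G$-invariant'' on a subcategory of $D^b_G(X)$ only makes sense once you specify a $\Rep(G)$-action on it.
\end{enumerate}
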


\begin{proof}
  Choose \(z\) so that the eigenvalues \(u_{1}, \cdots, u_{n}\) of \(\frac{-1}{z} E_{\psi}(1)\) have distinct real parts. Then the fundamental solution \(\Phi_{t}\) then satifies \(\norm{\Phi_{t}} \sim e^{\Re(u_{j})t}\) for some \(j\). If \(E\) is an eventually semistable object, we have \(\abs{Z_{t}(E)} \sim e^{\Re(u_{j}) t}\). The spanning condition in~\Cref{proposal:main} guarantees that for each eigenvalue \(u_{j}\), there exists an eventually semistable \(E\) with \(\abs{Z_{t}(E)} \sim e^{u_{j}t}\). This yields a decomposition
  \[D^{b}_{G}(X)=\langle \cC_{1}, \cdots, \cC_{n} \rangle\]
  into \(n\) distinct \(G\)-subcategories. Since $\mathrm{ch}_G$ is an ismorphism, each $\cC_i$ satisfies \(\dim_{R(G)} \cC_{i} = 1\). \Cref{lem:1} now implies that each \(\cC_{i}\) is generated by a single exceptional object.
\end{proof}

\printbibliography

\end{document}

%% file: preamble.tex
\usepackage[margin=1.6in]{geometry}

\usepackage{tikz-cd} %used for drawing commutative diagrams
\usepackage{graphicx} %supports image insertion
\usepackage{adjustbox} %supports adjusting box sizes
\usepackage{enumerate} %supports list environments
\usepackage{pgfplots}
\pgfplotsset{compat=1.17}
\usepackage{amssymb} %provides additional mathematical symbols and fonts
\usepackage{amsmath} %provides rich typesetting capabilities for mathematical expressions in LaTeX
\usepackage{mathtools} %provides more mathematical environments and commands
\usepackage{amsfonts} %provides additional mathematical fonts
\usepackage{amsthm} %supports theorem and proof environments
\usepackage{thmtools}
\usepackage{mathrsfs} %provides additional mathematical script fonts
\usepackage{physics}

\usepackage{esint}
\usepackage{epigraph} 
\usepackage{calligra}
\usepackage{array}
\usepackage{color}

\usepackage{subfiles} %supports separately compiling sub-files
\usepackage{appendix} %supports appendix environment
\usepackage{pgfplots, tikz, stmaryrd}
\usepackage[english]{babel} %supports multiple languages

\usepackage{xcolor}

\usepackage{fouriernc}
\usepackage{fancyhdr}
\usepackage{lastpage}
\usepackage{xstring}

\newenvironment{itenv*}
  {\phantomsection\par\medskip\noindent\itshape}
  {\par\medskip}

\usepackage[hypertexnames=false,colorlinks,linkcolor=red,anchorcolor=green,citecolor=blue]{hyperref} %provides cross-referencing functionality
\hypersetup{colorlinks,linkcolor={blue!50!black},citecolor={blue!50!black},urlcolor={blue!80!black}}
\hypersetup{linktocpage = true}
\usepackage{cleveref}
\DeclareMathOperator{\GL}{GL} %general linear group
\DeclareMathOperator{\PGL}{PGL} %projective general linear group
\renewcommand{\k}{\Bbbk} %abstract field
\renewcommand{\char}{\text{char}} %characteristic
\renewcommand{\Re}{\mathrm{Re}} %real part
\renewcommand{\Im}{\mathrm{Im}} %imaginary part
\renewcommand{\mod}{\mathrm{mod }} %module

\DeclareMathOperator{\Hom}{Hom} %homomorphism
\DeclareMathOperator{\End}{End} %endomorphism
\DeclareMathOperator{\Aut}{Aut} %automorphism
\DeclareMathOperator{\Rep}{Rep} %Representation
\DeclareMathOperator{\cE}{\mathcal{E}} %sheaf E
\DeclareMathOperator{\cF}{\mathcal{F}} %sheaf F
\DeclareMathOperator{\cG}{\mathcal{G}} %sheaf G
\DeclareMathOperator{\cO}{\mathcal{O}} %structure sheaf
\DeclareMathOperator{\sheafhom}{\mathscr{H}\text{\kern -3pt {\calligra\large om}}\,} %sheaf hom

\DeclareMathOperator{\cA}{\mathcal{A}} %smooth forms
\DeclareMathOperator{\T}{\mathbb{T}} %torus
\DeclareMathOperator{\ch}{ch} %chern character
\DeclareMathOperator{\cD}{\mathcal{D}} %currents
\DeclareMathOperator{\spec}{Spec} %prime spectrum
\usetikzlibrary{decorations.markings}
\tikzset{->-/.style={decoration={ markings, mark=at position #1 with
{\arrow{>}}},postaction={decorate}}}
\tikzset{-<-/.style={decoration={ markings, mark=at position #1 with
{\arrow{<}}},postaction={decorate}}}

\def\ev{\mathrm{ev}}

\def\hom{\mathrm{Hom}}
\def\spec{\mathrm{Spec}}
\def\exp{\mathrm{exp}}
\def\Stab{\mathrm{Stab}}

\def\log{\mathrm{log}\,}
\def\num{\mathrm{num}}

\def\Perf{\mathrm{Perf}}

\def\End{\mathrm{End}}

\def\gl2{\widetilde{\mathrm{GL}}^+_{2}(\mathbb R)}

\def\cA{\mathcal A}
\def\cB{\mathcal B}
\def\cC{\mathcal C}
\def\cD{\mathcal D}
\def\cE{\mathcal E}
\def\cF{\mathcal F}
\def\cG{\mathcal G}

\def\cL{\mathcal L}
\def\cM{\mathcal M}

\def\cO{\mathcal O}
\def\cP{\mathcal P}
\def\cQ{\mathcal Q}

\def\cT{\mathcal T}

\def\cZ{\mathcal Z}
\def\bZ{\mathbb Z}
\def\bP{\mathbb P}
\def\bQ{\mathbb Q}
\def\bR{\mathbb R}

\def\bC{\mathbb C}
\def\bT{\mathbb T}
\def\k{\mathbf{k}}

\def\y{\mathbf{y}}
\def\z{\mathbf{z}}

\def\i{\mathbf{i}}

\def\s{\mathbf{s}}
\def\n{\mathbf{n}}

\def\T{\mathbf{T}}

\theoremstyle{plain}
\newtheorem{theorem}{Theorem}[section]
\newtheorem{corollary}[theorem]{Corollary}
\newtheorem{proposition}[theorem]{Proposition}
\theoremstyle{definition}
\newtheorem{definition}[theorem]{Definition}

\newtheorem{example}[theorem]{Example}
\newtheorem{remark}[theorem]{Remark}

\newtheorem{lemma}[theorem]{Lemma}
\newtheorem{assumption}[theorem]{Assumption}
\newtheorem{conjecture}[theorem]{Conjecture}

\newtheorem{proposal}[theorem]{Proposal}
\usepackage{url}

\usetikzlibrary{matrix,positioning,decorations.markings,arrows,decorations.pathmorphing, 
backgrounds,fit,positioning,shapes.symbols,chains,shadings,fadings,calc}
\tikzset{->-/.style={decoration={ markings, mark=at position #1 with
{\arrow{>}}},postaction={decorate}}}
\tikzset{-<-/.style={decoration={ markings, mark=at position #1 with
{\arrow{<}}},postaction={decorate}}}

\usepackage[backend=biber,maxbibnames=99,maxalphanames=99,style=ext-alphabetic,date=year,url=false,isbn=false,doi=false]{biblatex}
\DeclareFieldFormat[inbook, article, misc, book, incollection]{citetitle}{#1}
\DeclareFieldFormat[inbook, article, misc, book, incollection]{title}{#1}
\DeclareFieldFormat{eprint:arxiv}{%
  arXiv\addcolon\space
  \ifhyperref
    {\href{https://arxiv.org/abs/#1}{\nolinkurl{#1}}}
    {\nolinkurl{#1}}%
}
\DeclareUnicodeCharacter{0411}{\CYRB}

%% file: sample.bib
@article {MR2373143,
    AUTHOR = {Bridgeland, Tom},
     TITLE = {Stability conditions on triangulated categories},
   JOURNAL = {Ann. of Math. (2)},
  FJOURNAL = {Annals of Mathematics. Second Series},
    VOLUME = {166},
      YEAR = {2007},
    NUMBER = {2},
     PAGES = {317--345},
      ISSN = {0003-486X,1939-8980},
   MRCLASS = {14F05 (18E30)},
  MRNUMBER = {2373143},
MRREVIEWER = {Leovigildo\ M.\ Alonso Tarrio},
       DOI = {10.4007/annals.2007.166.317},
       URL = {https://tlink.lib.tsinghua.edu.cn:443/https/443/org/doi/yitlink/10.4007/annals.2007.166.317},
}

@misc{kontsevich2008stabilitystructuresmotivicdonaldsonthomas,
      TITLE={Stability structures, motivic Donaldson-Thomas invariants and cluster transformations}, 
      AUTHOR={Maxim Kontsevich and Yan Soibelman},
      YEAR={2008},
      EPRINT={0811.2435},
      archivePrefix={arXiv},
      primaryClass={math.AG},
      URL={https://arxiv.org/abs/0811.2435}, 
}

@incollection {Bridgeland2006SpacesOS,
    AUTHOR = {Bridgeland, Tom},
     TITLE = {Spaces of stability conditions},
 BOOKTITLE = {Algebraic geometry---{S}eattle 2005. {P}art 1},
    SERIES = {Proc. Sympos. Pure Math.},
    VOLUME = {80, Part 1},
     PAGES = {1--21},
 PUBLISHER = {Amer. Math. Soc., Providence, RI},
      YEAR = {2009},
      ISBN = {978-0-8218-4702-2},
   MRCLASS = {14F05 (14J33 18E30 53D37 81T40 81T45)},
  MRNUMBER = {2483930},
MRREVIEWER = {Yunfeng\ Jiang},
       DOI = {10.1090/pspum/080.1/2483930},
       URL = {https://doi.org/10.1090/pspum/080.1/2483930},
}

@inproceedings {Bondal2002DerivedCO,
    AUTHOR = {Bondal, Alexei and Orlov, Dmitri},
     TITLE = {Derived categories of coherent sheaves},
 BOOKTITLE = {Proceedings of the {I}nternational {C}ongress of
              {M}athematicians, {V}ol. {II} ({B}eijing, 2002)},
     PAGES = {47--56},
 PUBLISHER = {Higher Ed. Press, Beijing},
      YEAR = {2002},
      ISBN = {7-04-008690-5},
   MRCLASS = {18E30 (14A22 14F05)},
  MRNUMBER = {1957019},
MRREVIEWER = {Bal\'azs\ Szendr\H oi},
}

@misc{Bondal1995SemiorthogonalDF,
  TITLE={Semiorthogonal decompositions for algebraic varieties.},
  Author={Alexei Bondal and Dmitri Orlov},
  EPRINT={alg-geom/9506012},
  archivePrefix={arXiv},
  primaryClass={math.AG},
  YEAR={1995},
  URL={https://arxiv.org/abs/alg-geom/9506012}
}

@article{Collins2009GluingSC,
    AUTHOR = {Collins, John and Polishchuk, Alexander},
     TITLE = {Gluing stability conditions},
   JOURNAL = {Adv. Theor. Math. Phys.},
  FJOURNAL = {Advances in Theoretical and Mathematical Physics},
    VOLUME = {14},
      YEAR = {2010},
    NUMBER = {2},
     PAGES = {563--607},
      ISSN = {1095-0761,1095-0753},
   MRCLASS = {14F05},
  MRNUMBER = {2721656},
MRREVIEWER = {Daniele\ Faenzi},
       DOI = {10.4310/atmp.2010.v14.n2.a6},
       URL = {https://doi.org/10.4310/atmp.2010.v14.n2.a6},
}

@misc{halpernleistner2024noncommutativeminimalmodelprogram,
      title={The noncommutative minimal model program}, 
      author={Daniel Halpern-Leistner},
      year={2024},
      eprint={2301.13168},
      archivePrefix={arXiv},
      primaryClass={math.AG},
      url={https://arxiv.org/abs/2301.13168}, 
}

@misc{halpernleistner2024stabilityconditionssemiorthogonaldecompositions,
      title={Stability conditions and semiorthogonal decompositions I: quasi-convergence}, 
      author={Daniel Halpern-Leistner and Jeffrey Jiang and Antonios-Alexandros Robotis},
      year={2024},
      eprint={2401.00600},
      archivePrefix={arXiv},
      primaryClass={math.AG},
      url={https://arxiv.org/abs/2401.00600}, 
}

@article {dell2024fusionequivariantstabilityconditionsmorita,
    AUTHOR = {Dell, Hannah and Heng, Edmund and Licata, Anthony M.},
     TITLE = {Fusion-equivariant stability conditions and {M}orita duality},
   JOURNAL = {Math. Z.},
  FJOURNAL = {Mathematische Zeitschrift},
    VOLUME = {311},
      YEAR = {2025},
    NUMBER = {3},
     PAGES = {Paper No. 48, 31},
      ISSN = {0025-5874,1432-1823},
   MRCLASS = {18M20 (14F08 18G80 32Q26)},
  MRNUMBER = {4953680},
       DOI = {10.1007/s00209-025-03838-z},
       URL = {https://doi.org/10.1007/s00209-025-03838-z},
}

@article{Beckmann2020OnED,
    AUTHOR = {Beckmann, Thorsten and Oberdieck, Georg},
     TITLE = {On equivariant derived categories},
   JOURNAL = {Eur. J. Math.},
  FJOURNAL = {European Journal of Mathematics},
    VOLUME = {9},
      YEAR = {2023},
    NUMBER = {2},
     PAGES = {Paper No. 36, 39},
      ISSN = {2199-675X,2199-6768},
   MRCLASS = {14F08 (13D03 14H60 18G80)},
  MRNUMBER = {4589277},
MRREVIEWER = {Stefan\ Schr\"oer},
       DOI = {10.1007/s40879-023-00635-y},
       URL = {https://doi.org/10.1007/s40879-023-00635-y},
}

@misc{elagin2015equivarianttriangulatedcategories,
      title={On equivariant triangulated categories}, 
      author={Alexey Elagin},
      year={2015},
      eprint={1403.7027},
      archivePrefix={arXiv},
      primaryClass={math.AG},
      url={https://arxiv.org/abs/1403.7027}, 
}

@article {krug2024endomorphismalgebrasequivariantexceptional,
    AUTHOR = {Krug, Andreas and Nikolov, Erik},
     TITLE = {Endomorphism algebras of equivariant exceptional collections},
   JOURNAL = {Algebr. Represent. Theory},
  FJOURNAL = {Algebras and Representation Theory},
    VOLUME = {28},
      YEAR = {2025},
    NUMBER = {1},
     PAGES = {193--210},
      ISSN = {1386-923X,1572-9079},
   MRCLASS = {18G80 (14F08 14L30 16E35 16G10 16G20)},
  MRNUMBER = {4875693},
MRREVIEWER = {Xiao-Wu\ Chen},
       DOI = {10.1007/s10468-025-10313-0},
       URL = {https://doi.org/10.1007/s10468-025-10313-0},
}

@article{MacriSukhenduPaolo07,
    AUTHOR = {Macr\`i, Emanuele and Mehrotra, Sukhendu and Stellari, Paolo},
     TITLE = {Inducing stability conditions},
   JOURNAL = {J. Algebraic Geom.},
  FJOURNAL = {Journal of Algebraic Geometry},
    VOLUME = {18},
      YEAR = {2009},
    NUMBER = {4},
     PAGES = {605--649},
      ISSN = {1056-3911,1534-7486},
   MRCLASS = {14F05 (14L30 18E30)},
  MRNUMBER = {2524593},
MRREVIEWER = {Amnon\ Neeman},
       DOI = {10.1090/S1056-3911-09-00524-4},
       URL = {https://doi.org/10.1090/S1056-3911-09-00524-4},
}

@book {Anderson_Fulton_2023,
    AUTHOR = {Anderson, David and Fulton, William},
     TITLE = {Equivariant cohomology in algebraic geometry},
    SERIES = {Cambridge Studies in Advanced Mathematics},
    VOLUME = {210},
 PUBLISHER = {Cambridge University Press, Cambridge},
      YEAR = {2024},
     PAGES = {xv+446},
      ISBN = {978-1-00-934998-7},
   MRCLASS = {14L30 (05E14 14-02 14F43 14Mxx 20G05 55N91)},
  MRNUMBER = {4655919},
MRREVIEWER = {Michael\ Orin\ Joyce},
}

@article{Givental1996EquivariantG,
    AUTHOR = {Givental, Alexander B.},
     TITLE = {Equivariant {G}romov-{W}itten invariants},
   JOURNAL = {Internat. Math. Res. Notices},
  FJOURNAL = {International Mathematics Research Notices},
      YEAR = {1996},
    NUMBER = {13},
     PAGES = {613--663},
      ISSN = {1073-7928,1687-0247},
   MRCLASS = {14D07 (14D05 14J32 14N10 32G20)},
  MRNUMBER = {1408320},
MRREVIEWER = {Claire\ Voisin},
       DOI = {10.1155/S1073792896000414},
       URL = {https://doi.org/10.1155/S1073792896000414},
}

@book {Cox1999MirrorSA,
    AUTHOR = {Cox, David A. and Katz, Sheldon},
     TITLE = {Mirror symmetry and algebraic geometry},
    SERIES = {Mathematical Surveys and Monographs},
    VOLUME = {68},
 PUBLISHER = {American Mathematical Society, Providence, RI},
      YEAR = {1999},
     PAGES = {xxii+469},
      ISBN = {0-8218-1059-6},
   MRCLASS = {14J32 (14-02 14M25 14N10 14N35 32G81 32J81 32Q25)},
  MRNUMBER = {1677117},
MRREVIEWER = {Andreas\ Gathmann},
       DOI = {10.1090/surv/068},
       URL = {https://doi.org/10.1090/surv/068},
}

@incollection {1994hep.th....5035K,
    AUTHOR = {Kontsevich, Maxim},
     TITLE = {Enumeration of rational curves via torus actions},
 BOOKTITLE = {The moduli space of curves ({T}exel {I}sland, 1994)},
    SERIES = {Progr. Math.},
    VOLUME = {129},
     PAGES = {335--368},
 PUBLISHER = {Birkh\"auser Boston, Boston, MA},
      YEAR = {1995},
      ISBN = {0-8176-3784-2},
   MRCLASS = {14N10 (14D22 14L30)},
  MRNUMBER = {1363062},
MRREVIEWER = {Anatoly\ Libgober},
       DOI = {10.1007/978-1-4612-4264-2\_12},
       URL = {https://doi.org/10.1007/978-1-4612-4264-2_12},
}

@article{Behrend1995StacksOS,
    AUTHOR = {Behrend, K. and Manin, Yu.},
     TITLE = {Stacks of stable maps and {G}romov-{W}itten invariants},
   JOURNAL = {Duke Math. J.},
  FJOURNAL = {Duke Mathematical Journal},
    VOLUME = {85},
      YEAR = {1996},
    NUMBER = {1},
     PAGES = {1--60},
      ISSN = {0012-7094,1547-7398},
   MRCLASS = {14D20 (14C25 14D22)},
  MRNUMBER = {1412436},
MRREVIEWER = {Barbara\ Fantechi},
       DOI = {10.1215/S0012-7094-96-08501-4},
       URL = {https://doi.org/10.1215/S0012-7094-96-08501-4},
}

@article{liu2017equivariant,
    AUTHOR = {Liu, Chiu-Chu Melissa and Sheshmani, Artan},
     TITLE = {Equivariant {G}romov-{W}itten invariants of algebraic {GKM}
              manifolds},
   JOURNAL = {SIGMA Symmetry Integrability Geom. Methods Appl.},
  FJOURNAL = {SIGMA. Symmetry, Integrability and Geometry. Methods and
              Applications},
    VOLUME = {13},
      YEAR = {2017},
     PAGES = {Paper No. 048, 21},
      ISSN = {1815-0659},
   MRCLASS = {14N35 (14D20 14H10)},
  MRNUMBER = {3667222},
MRREVIEWER = {Jie\ Zhou},
       DOI = {10.3842/SIGMA.2017.048},
       URL = {https://doi.org/10.3842/SIGMA.2017.048},
}

@misc{zuliani2024semiorthogonaldecompositionsprojectivespaces,
      title={Semiorthogonal decompositions of projective spaces from small quantum cohomology}, 
      author={Vanja Zuliani},
      year={2024},
      eprint={2406.17616},
      archivePrefix={arXiv},
      primaryClass={math.AG},
      url={https://arxiv.org/abs/2406.17616}, 
}

@incollection {Cotti2019EquivariantQD,
    AUTHOR = {Cotti, Giordano and Varchenko, Alexander},
     TITLE = {Equivariant quantum differential equation and q{KZ} equations
              for a projective space: {S}tokes bases as exceptional
              collections, {S}tokes matrices as {G}ram matrices, and {B}-theorem},
 BOOKTITLE = {Integrability, quantization, and geometry. {I}. {I}ntegrable
              systems},
    SERIES = {Proc. Sympos. Pure Math.},
    VOLUME = {103.1},
     PAGES = {101--170},
 PUBLISHER = {Amer. Math. Soc., Providence, RI},
      YEAR = {2021},
      ISBN = {978-1-4704-5591-0},
   MRCLASS = {14N35 (17B80 34M35 34M40)},
  MRNUMBER = {4285678},
MRREVIEWER = {Ahmed\ Lesfari},
       DOI = {10.1090/pspum/103.1/01833},
       URL = {https://doi.org/10.1090/pspum/103.1/01833},
}

@article{10.1215/00127094-3476593,
    AUTHOR = {Galkin, Sergey and Golyshev, Vasily and Iritani, Hiroshi},
     TITLE = {Gamma classes and quantum cohomology of {F}ano manifolds:
              Gamma conjectures},
   JOURNAL = {Duke Math. J.},
  FJOURNAL = {Duke Mathematical Journal},
    VOLUME = {165},
      YEAR = {2016},
    NUMBER = {11},
     PAGES = {2005--2077},
      ISSN = {0012-7094,1547-7398},
   MRCLASS = {53D37 (11G42 14J33 14J45 14M15 14N35)},
  MRNUMBER = {3536989},
MRREVIEWER = {Jian\ Xun\ Hu},
       DOI = {10.1215/00127094-3476593},
       URL = {https://doi.org/10.1215/00127094-3476593},
}

@article{IRITANI20091016,
    AUTHOR = {Iritani, Hiroshi},
     TITLE = {An integral structure in quantum cohomology and mirror
              symmetry for toric orbifolds},
   JOURNAL = {Adv. Math.},
  FJOURNAL = {Advances in Mathematics},
    VOLUME = {222},
      YEAR = {2009},
    NUMBER = {3},
     PAGES = {1016--1079},
      ISSN = {0001-8708,1090-2082},
   MRCLASS = {53D37 (14J33 14N35 53D45)},
  MRNUMBER = {2553377},
MRREVIEWER = {Hsian-Hua\ Tseng},
       DOI = {10.1016/j.aim.2009.05.016},
       URL = {https://doi.org/10.1016/j.aim.2009.05.016},
}

@article{Polishchuk06,
    AUTHOR = {Polishchuk, Alexander},
     TITLE = {Constant families of {$t$}-structures on derived categories of
              coherent sheaves},
   JOURNAL = {Mosc. Math. J.},
  FJOURNAL = {Moscow Mathematical Journal},
    VOLUME = {7},
      YEAR = {2007},
    NUMBER = {1},
     PAGES = {109--134, 167},
      ISSN = {1609-3321,1609-4514},
   MRCLASS = {14F05 (18E30)},
  MRNUMBER = {2324559},
MRREVIEWER = {Andrei\ D.\ Halanay},
       DOI = {10.17323/1609-4514-2007-7-1-109-134},
       URL = {https://doi.org/10.17323/1609-4514-2007-7-1-109-134},
}

@article{Keller:2011nql,
    AUTHOR = {Keller, Bernhard},
     TITLE = {Deformed {C}alabi-{Y}au completions},
      NOTE = {With an appendix by Michel Van den Bergh},
   JOURNAL = {J. Reine Angew. Math.},
  FJOURNAL = {Journal f\"ur die Reine und Angewandte Mathematik. [Crelle's
              Journal]},
    VOLUME = {654},
      YEAR = {2011},
     PAGES = {125--180},
      ISSN = {0075-4102,1435-5345},
   MRCLASS = {18E30 (13F60 16E35 16E45 18E35 18G10)},
  MRNUMBER = {2795754},
MRREVIEWER = {Gregoire\ Dupont},
       DOI = {10.1515/CRELLE.2011.031},
       URL = {https://doi.org/10.1515/CRELLE.2011.031},
}

@article{Ikeda_Qiu_2023,
    AUTHOR = {Ikeda, Akishi and Qiu, Yu},
     TITLE = {{$q$}-stability conditions on {C}alabi-{Y}au-{$\Bbb X$}
              categories},
   JOURNAL = {Compos. Math.},
  FJOURNAL = {Compositio Mathematica},
    VOLUME = {159},
      YEAR = {2023},
    NUMBER = {7},
     PAGES = {1347--1386},
      ISSN = {0010-437X,1570-5846},
   MRCLASS = {18G80 (14F08 32Q26)},
  MRNUMBER = {4599210},
       DOI = {10.1112/s0010437x23007194},
       URL = {https://doi.org/10.1112/s0010437x23007194}
}

@article{GKM97,
    AUTHOR = {Goresky, Mark and Kottwitz, Robert and MacPherson, Robert},
     TITLE = {Equivariant cohomology, {K}oszul duality, and the localization
              theorem},
   JOURNAL = {Invent. Math.},
  FJOURNAL = {Inventiones Mathematicae},
    VOLUME = {131},
      YEAR = {1998},
    NUMBER = {1},
     PAGES = {25--83},
      ISSN = {0020-9910,1432-1297},
   MRCLASS = {55N91 (14F25 14F32 16E99 18G10 55N33)},
  MRNUMBER = {1489894},
MRREVIEWER = {Roy\ Joshua},
       DOI = {10.1007/s002220050197},
       URL = {https://doi.org/10.1007/s002220050197},
}

@article{Bridgelandk3surface,
    AUTHOR = {Bridgeland, Tom},
     TITLE = {Stability conditions on {$K3$} surfaces},
   JOURNAL = {Duke Math. J.},
  FJOURNAL = {Duke Mathematical Journal},
    VOLUME = {141},
      YEAR = {2008},
    NUMBER = {2},
     PAGES = {241--291},
      ISSN = {0012-7094,1547-7398},
   MRCLASS = {14F05 (14J28 18E30)},
  MRNUMBER = {2376815},
MRREVIEWER = {Andrei\ D.\ Halanay},
       DOI = {10.1215/S0012-7094-08-14122-5},
       URL = {https://doi.org/10.1215/S0012-7094-08-14122-5},

}

@book{manin1999frobenius,
    AUTHOR = {Manin, Yuri I.},
     TITLE = {Frobenius manifolds, quantum cohomology, and moduli spaces},
    SERIES = {American Mathematical Society Colloquium Publications},
    VOLUME = {47},
 PUBLISHER = {American Mathematical Society, Providence, RI},
      YEAR = {1999},
     PAGES = {xiv+303},
      ISBN = {0-8218-1917-8},
   MRCLASS = {53D45 (14H10 14N35 18D50 32G34)},
  MRNUMBER = {1702284},
MRREVIEWER = {Alexandre\ I.\ Kabanov},
       DOI = {10.1090/coll/047},
       URL = {https://doi.org/10.1090/coll/047},
}

@article{DOOrlov_1993,
    AUTHOR = {Orlov, Dmitri O.},
     TITLE = {Projective bundles, monoidal transformations, and derived
              categories of coherent sheaves},
   JOURNAL = {Izv. Ross. Akad. Nauk Ser. Mat.},
  FJOURNAL = {Izvestiya Rossiiskoi Akademii Nauk. Seriya Matematicheskaya},
    VOLUME = {56},
      YEAR = {1992},
    NUMBER = {4},
     PAGES = {852--862},
      ISSN = {1607-0046,2587-5906},
   MRCLASS = {14F05 (18E30 18F20)},
  MRNUMBER = {1208153},
MRREVIEWER = {Krzysztof\ Jaczewski},
       DOI = {10.1070/IM1993v041n01ABEH002182},
       URL = {https://doi.org/10.1070/IM1993v041n01ABEH002182},
}

@incollection {Dolgachev2009,
    AUTHOR = {Dolgachev, Igor V.},
     TITLE = {Finite subgroups of the plane {C}remona group},
 BOOKTITLE = {Algebraic geometry in {E}ast {A}sia---{S}eoul 2008},
    SERIES = {Adv. Stud. Pure Math.},
    VOLUME = {60},
     PAGES = {1--49},
 PUBLISHER = {Math. Soc. Japan, Tokyo},
      YEAR = {2010},
      ISBN = {978-4-931469-63-1},
   MRCLASS = {14E07},
  MRNUMBER = {2732091},
MRREVIEWER = {Concettina\ Galati},
       DOI = {10.2969/aspm/06010001},
       URL = {https://doi.org/10.2969/aspm/06010001},
}

@misc{karube2024noncommutativemmpblowupsurfaces,
      title={The noncommutative MMP for blowup surfaces}, 
      author={Tomohiro Karube},
      year={2024},
      eprint={2410.18446},
      archivePrefix={arXiv},
      primaryClass={math.AG},
      url={https://arxiv.org/abs/2410.18446},
}

@book{wasow2018asymptotic,
    AUTHOR = {Wasow, Wolfgang},
     TITLE = {Asymptotic expansions for ordinary differential equations},
      NOTE = {Reprint of the 1976 edition},
 PUBLISHER = {Dover Publications, Inc., New York},
      YEAR = {1987},
     PAGES = {x+374},
      ISBN = {0-486-65456-7},
   MRCLASS = {34-02 (34E05)},
  MRNUMBER = {919406},
}

@article{BP1994,
    AUTHOR = {Bondal, A. I. and Polishchuk, A. E.},
     TITLE = {Homological properties of associative algebras: the method of
              helices},
   JOURNAL = {Izv. Ross. Akad. Nauk Ser. Mat.},
  FJOURNAL = {Izvestiya Rossiiskoi Akademii Nauk. Seriya Matematicheskaya},
    VOLUME = {57},
      YEAR = {1993},
    NUMBER = {2},
     PAGES = {3--50},
      ISSN = {1607-0046,2587-5906},
   MRCLASS = {16E99 (14F05 14J45 18F20 18G50)},
  MRNUMBER = {1230966},
       DOI = {10.1070/IM1994v042n02ABEH001536},
       URL = {https://doi.org/10.1070/IM1994v042n02ABEH001536},
}

@misc{KO2018,
      title={Nonexistence of semiorthogonal decompositions and sections of the canonical bundle},
      author={Kotaro Kawatani and Shinnosuke Okawa},
      year={2018},
      eprint={1508.00682},
      archivePrefix={arXiv},
      primaryClass={math.AG},
      url={https://arxiv.org/abs/1508.00682},
}

@ARTICLE{2023arXiv231002917Q,
    AUTHOR = {Qiu, Yu and Zhang, Xiaoting},
     TITLE = {Fusion-stable structures on triangulated categories},
   JOURNAL = {Selecta Math. (N.S.)},
  FJOURNAL = {Selecta Mathematica. New Series},
    VOLUME = {31},
      YEAR = {2025},
    NUMBER = {3},
     PAGES = {Paper No. 50, 29},
      ISSN = {1022-1824,1420-9020},
   MRCLASS = {18M20 (16G20 18G80 20F36)},
  MRNUMBER = {4912527},
       DOI = {10.1007/s00029-025-01037-6},
       URL = {https://doi.org/10.1007/s00029-025-01037-6},
}

@article{Li2017,
    AUTHOR = {Li, Chunyi},
     TITLE = {The space of stability conditions on the projective plane},
   JOURNAL = {Selecta Math. (N.S.)},
  FJOURNAL = {Selecta Mathematica. New Series},
    VOLUME = {23},
      YEAR = {2017},
    NUMBER = {4},
     PAGES = {2927--2945},
      ISSN = {1022-1824,1420-9020},
   MRCLASS = {14F05 (18E30)},
  MRNUMBER = {3703470},
MRREVIEWER = {Martijn\ Kool},
       DOI = {10.1007/s00029-017-0352-4},
       URL = {https://doi.org/10.1007/s00029-017-0352-4},
}

@article{ChenRuancohomology,
    AUTHOR = {Chen, Weimin and Ruan, Yongbin},
     TITLE = {A new cohomology theory of orbifold},
   JOURNAL = {Comm. Math. Phys.},
  FJOURNAL = {Communications in Mathematical Physics},
    VOLUME = {248},
      YEAR = {2004},
    NUMBER = {1},
     PAGES = {1--31},
      ISSN = {0010-3616,1432-0916},
   MRCLASS = {57R19 (53D45)},
  MRNUMBER = {2104605},
MRREVIEWER = {Paolo\ Lisca},
       DOI = {10.1007/s00220-004-1089-4},
       URL = {https://doi.org/10.1007/s00220-004-1089-4},
}

@article{chen2002orbifold,
    AUTHOR = {Chen, Weimin and Ruan, Yongbin},
     TITLE = {Orbifold {G}romov-{W}itten theory},
 BOOKTITLE = {Orbifolds in mathematics and physics ({M}adison, {WI}, 2001)},
    SERIES = {Contemp. Math.},
    VOLUME = {310},
     PAGES = {25--85},
 PUBLISHER = {Amer. Math. Soc., Providence, RI},
      YEAR = {2002},
      ISBN = {0-8218-2990-4},
   MRCLASS = {53D45 (14N35)},
  MRNUMBER = {1950941},
MRREVIEWER = {Ignasi\ Mundet-Riera},
       DOI = {10.1090/conm/310/05398},
       URL = {https://doi.org/10.1090/conm/310/05398},
}

@article{Blanc2006LinearisationOF,
    AUTHOR = {Blanc, J\'er\'emy},
     TITLE = {Linearisation of finite abelian subgroups of the {C}remona
              group of the plane},
   JOURNAL = {Groups Geom. Dyn.},
  FJOURNAL = {Groups, Geometry, and Dynamics},
    VOLUME = {3},
      YEAR = {2009},
    NUMBER = {2},
     PAGES = {215--266},
      ISSN = {1661-7207,1661-7215},
   MRCLASS = {14E07 (14E05 14L30)},
  MRNUMBER = {2486798},
MRREVIEWER = {Julie\ D\'eserti},
       DOI = {10.4171/GGD/55},
       URL = {https://doi.org/10.4171/GGD/55},
}

@article{Tsygankov_2011,
    AUTHOR = {Tsygankov, Vladimir I.},
     TITLE = {Equations of {$G$}-minimal conic bundles},
   JOURNAL = {Mat. Sb.},
  FJOURNAL = {Matematicheski\u i\ Sbornik},
    VOLUME = {202},
      YEAR = {2011},
    NUMBER = {11},
     PAGES = {103--160},
      ISSN = {0368-8666,2305-2783},
   MRCLASS = {14E25 (14E07)},
  MRNUMBER = {2907201},
MRREVIEWER = {Alexandr\ V.\ Pukhlikov},
       DOI = {10.1070/SM2011v202n11ABEH004204},
       URL = {https://doi.org/10.1070/SM2011v202n11ABEH004204},
}

@article{Prokhorov_2021,
    AUTHOR = {Prokhorov, Yuri G.},
     TITLE = {Equivariant minimal model program},
   JOURNAL = {Uspekhi Mat. Nauk},
  FJOURNAL = {Uspekhi Matematicheskikh Nauk},
    VOLUME = {76},
      YEAR = {2021},
    NUMBER = {3(459)},
     PAGES = {93--182},
      ISSN = {0042-1316,2305-2872},
   MRCLASS = {14E30},
  MRNUMBER = {4265398},
MRREVIEWER = {Igor\ V.\ Nikolaev},
       DOI = {10.4213/rm9990},
       URL = {https://doi.org/10.4213/rm9990},

}

@book {Kollár_Mori_1998,
    AUTHOR = {Koll\'ar, J\'anos and Mori, Shigefumi},
     TITLE = {Birational geometry of algebraic varieties},
    SERIES = {Cambridge Tracts in Mathematics},
    VOLUME = {134},
 PUBLISHER = {Cambridge University Press, Cambridge},
      YEAR = {1998},
     PAGES = {viii+254},
      ISBN = {0-521-63277-3},
   MRCLASS = {14E30},
  MRNUMBER = {1658959},
MRREVIEWER = {Mark\ Gross},
       DOI = {10.1017/CBO9780511662560},
       URL = {https://doi.org/10.1017/CBO9780511662560},
}

@misc{ESS2025,
      title={Atomic decompositions for derived categories of G-surfaces},
      author={Alexey Elagin and Julia Schneider and Evgeny Shinder},
      year={2025},
      eprint={2512.05064},
      archivePrefix={arXiv},
      primaryClass={math.AG},
      url={https://arxiv.org/abs/2512.05064},
}

@article{Toda1412,
    AUTHOR = {Toda, Yukinobu},
     TITLE = {Gepner type stability conditions on graded matrix
              factorizations},
   JOURNAL = {Algebr. Geom.},
  FJOURNAL = {Algebraic Geometry},
    VOLUME = {1},
      YEAR = {2014},
    NUMBER = {5},
     PAGES = {613--665},
      ISSN = {2313-1691,2214-2584},
   MRCLASS = {14F05 (18E30)},
  MRNUMBER = {3296807},
MRREVIEWER = {Pawel\ Sosna},
       DOI = {10.14231/AG-2014-026},
       URL = {https://doi.org/10.14231/AG-2014-026},
}

@article{10.1093/imrn/rnv125,
    AUTHOR = {Toda, Yukinobu},
     TITLE = {Gepner type stability condition via {O}rlov/{K}uznetsov
              equivalence},
   JOURNAL = {Int. Math. Res. Not. IMRN},
  FJOURNAL = {International Mathematics Research Notices. IMRN},
      YEAR = {2016},
    NUMBER = {1},
     PAGES = {24--82},
      ISSN = {1073-7928,1687-0247},
   MRCLASS = {14F05},
  MRNUMBER = {3514058},
MRREVIEWER = {Mihnea\ Popa},
       DOI = {10.1093/imrn/rnv125},
       URL = {https://doi.org/10.1093/imrn/rnv125},
}

@misc{Li2026,
      title={A Remark on Stability Conditions on Smooth Projective Varieties},
      author={Chunyi Li},
      year={2026},
      eprint={2601.22994},
      archivePrefix={arXiv},
      primaryClass={math.AG},
      url={https://arxiv.org/abs/2601.22994},
}

@misc{KRZ2026,
      title={Toward the noncommutative minimal model program for Fano varieties},
      author={Tomohiro Karube and Antonios-Alexandros Robotis and Vanja Zuliani},
      year={2026},
      eprint={2601.20739},
      archivePrefix={arXiv},
      primaryClass={math.AG},
      url={https://arxiv.org/abs/2601.20739},
}
